\numberwithin{equation}{theorem}
\newcommand{\kay}{\mathcal{k}}
\renewcommand{\:}{\colon}
\newcommand{\eg}{{\itshape e.g.} }
\newcommand{\p}{\mathfrak{p}}
\newcommand{\q}{\mathfrak{q}}
\DeclareMathOperator{\Adj}{Adj}
\DeclareMathOperator{\Ass}{Ass}
\DeclareMathOperator{\GL}{GL}
\DeclareMathOperator{\Cl}{Cl}
\theoremstyle{theorem}
\renewcommand{\sO}{\mathcal{O}}
\renewcommand{\sC}{\mathcal{C}}
\begin{document}
\title{Singularities of determinantal pure pairs}
\author[J.~Carvajal-Rojas]{Javier Carvajal-Rojas}
\address{KU Leuven\\ Department of Mathematics\\ Celestijnenlaan 200B \\3001 Heverlee\\Belgium \newline\indent
Universidad de Costa Rica\\ Escuela de Matem\'atica\\ San Jos\'e 11501\\ Costa Rica}
\email{\href{mailto:javier.carvajalrojas@epfl.ch}{javier.carvajal-rojas@kuleuven.be}}

\author[A.~Vilpert]{Arnaud Vilpert}
\email{\href{mailto:arnaud.vilpert@epfl.ch}{arnaud.vilpert@proton.me}}

\keywords{Determinantal varieties, purely $F$-regular pairs, purely log terminal pairs}
\thanks{The first named author was partially supported by the grants ERC-STG \#804334 and FWO \#G079218N}
\subjclass[2020]{13A35, 13A50, 14B05, 14E30}


\begin{abstract}
Let $X$ be a generic determinantal affine variety over a perfect field of characteristic $p \geq 0$ and $P \subset X$ be a standard prime divisor generator of $\Cl X \cong \bZ$. We prove that the pair $(X,P)$ is purely $F$-regular if $p >0$ and so that  $(X,P)$ is purely log terminal (PLT) if $p=0$ and $(X,P)$ is log $\bQ$-Gorenstein. In general, using recent results of Z. Zhuang and S. Lyu, we show that $(X,P)$ is of PLT-type, \ie there is a $\bQ$-divisor $\Delta$ with coefficients in $[0,1)$ such that $(X,P+\Delta)$ is PLT. 
\end{abstract}
\maketitle

\section{Introduction}

Determinantal varieties are fascinating examples of many situations in algebraic geometry and commutative algebra. For instance, they have KLT-type singularities and projective determinantal varieties are of Fano-type. Likewise, in positive characteristics, they are $F$-regular varieties. From this viewpoint, determinantal varieties are a prominent study case of $F$-singularities and singularities in the minimal model program (MMP).

In studying such singularities, factoriality and more specifically the structure of divisor class groups play a central role. For KLT-type and $F$-regular singularities, we know how to get rid of torsion in their divisor class groups by taking cyclic covers; see \eg \cite[\S5]{CarvajalFiniteTorsors}.\footnote{It is worth noting that one may trivialize divisor class groups by much more sophisticated covers \cite{BraunMoragaIterationOfCoxRings}.} In particular, in studying the structure of divisor class groups, those KLT-type/$F$-regular singularities with free divisor class group are the most interesting ones and determinantal singularities are a premium examples of them. Indeed, the divisor class group of a determinantal singularity is free of rank $1$. Moreover, its generator can be taken to be a certain determinantal prime divisor. This work is concerned with the singularities of such prime divisors generating the divisor class group of determinantal varieties. 

There are many reasons to care about the singularities of these determinantal pure pairs. First, this is the ``right thing to do'' from the MMP standpoint, \eg adjunction. Further, the finite generation of divisor class groups of $F$-regular singularities is an open problem of paramount importance in the theory of $F$-singularities, where the non-torsion case is the unsolved one (see \cite{PolstraMCM,MartinNumberOfGenerators,CarvajalFiniteTorsors}). It is therefore relevant to examine the singularities of prime divisors generating the divisor class groups of $F$-regular singularities. For instance, if they happen to be centers of $F$-purity, we may be able to show some finiteness on them. Finally, this is the setup considered in \cite{CarvajalRojasStaeblerTameFundamentalGroupsPurePairs} to obtain singular versions of Abhyankar's lemma. We aim to answer \cite[Question 2.19]{CarvajalRojasStaeblerTameFundamentalGroupsPurePairs} so that their results regarding tame fundamental groups apply to determinantal pure pairs; see Theorem A below. 

Let $\kay$ be an algebraically closed field of characteristic $p \geq 0$. In what follows, all relative operations (\eg products of varieties and tensor products) and relative notions (\eg varieties, smoothness, projectivity) are defined over $\kay$. 

For positive integers $m,n\geq 1$, let us denote the affine variety of $m\times n$ matrices over $\kay$ by $\bM^{m \times n}$. Of course, $\bM^{m \times n} \cong \bA^{mn}$. For $1\leq t \leq \min\{m,n\}$, let $\bM_t^{m \times n} \subset \bM^{m \times n}$ be the affine variety of $m \times n$ $\kay$-matrices of rank $<t$.  Such affine varieties are collectively known as \emph{(generic) affine determinantal varieties}. To see why, let us recall how their ring of coordinates $R_t^{m \times n} \coloneqq \kay[\bM_t^{m\times n}]$ are presented. Let $\bm{x}=(x_{i,j})_{m \times n}$ be an $m \times n$ matrix of indeterminates and $I_t(\bm{x})$ be the homogeneous ideal of $\kay[\bm{x}]$ generated by all the $t$-minors of $\bm{x}$. For example,
\[
I_2\begin{pmatrix}
 u & v & w\\
 x & y & z
 \end{pmatrix} = (vz-wy,uz-wx,uy-vx) \subset \kay\begin{bmatrix}
 u & v & w\\
 x & y & z
 \end{bmatrix}.
\]
It turns out that:
\[
R_t^{m \times n} = \kay[\bm{x}]/I_t(\bm{x}).
\]

The algebras $R_t^{m\times n}$ are referred to as \emph{(generic) determinantal rings}. Since $I_t$ is a homogeneous prime ideal, $R_t^{m \times n}$ is a standard $\bN$-graded $\kay$-algebra and so it defines a projective variety $\bD_t^{m \times n} \coloneqq \Proj R_t^{m \times n}$; which is referred to as \emph{(generic) projective determinantal varieties}. For instance, $\bD_2^{m \times n}$ is none other than the Segre embedding of $\bP^{m-1} \times \bP^{n-1}$ and the other projective determinantal varieties are obtained as their secant varieties \cite[\S9]{HArrisAlgebraicGeometry}.

The algebras $R_t^{m \times n}$ and therefore the varieties $\bM_t^{m\times n}$, $\bD_t^{m \times n}$ have been extensively studied in the literature; see \cite{BrunsVetterDeterminantalRings} for an account. Perhaps, the most classical reason to study them comes from invariant theory. According to the fundamental theorems of invariant theory; see \cite[II, \S3]{ArbarelloCornalbaGriffithsHArrisGeometryOfAlgebraicCurves}, \cite[Ch. 7]{BrunsVetterDeterminantalRings}, we may think of $R_t^{m \times n}$ as the algebra of invariants of the polynomial algebra 
\[
\kay[(\bm{x})_{m \times (t-1)}] \otimes \kay[(\bm{y})_{(t-1) \times n}] = \kay\begin{bmatrix}
 \bm{x} & \bm{0} \\
 \bm{0} & \bm{y} 
 \end{bmatrix} 
\] 
under the linear action of $\GL_{t-1}(\kay)$ given by:
\[
G \: \begin{pmatrix}
 \bm{x} & \bm{0} \\
 \bm{0} & \bm{y} 
 \end{pmatrix}  \mapsto \begin{pmatrix}
 \bm{x} G^{-1} & \bm{0} \\
 \bm{0} & G\bm{y} 
 \end{pmatrix}, \quad \forall G \in \GL_{t-1}(\kay).
\] 
In more geometric terms, we have the following situation:
 \begin{itemize}
     \item $\GL_{t-1}$ acts on $\bM^{m \times (t-1)} \times \bM^{(t-1) \times n}$ via $G \cdot (M,N) \coloneqq (M G^{-1},GN)$,
     \item $\bM_t^{m \times n}$ is the image of the matrix-multiplication map $\bM^{m \times (t-1)} \times \bM^{(t-1) \times n} \to \bM^{m \times n}$ and the induced map $\bM^{m \times (t-1)} \times \bM^{(t-1) \times n} \to \bM_t^{m \times n}$ realizes the quotient of the above action of $\GL_{t-1}$ on $\bM^{m \times (t-1)} \times \bM^{(t-1) \times n}$.
 \end{itemize}
 
 In case $p = 0$, the above implies that $\bM_t^{m \times n}$ is a reductive quotient and therefore it is a normal integral variety with Cohen--Macaulay singularities by the seminal work of Hochster--Roberts \cite{HochsterRobertsRingsOfInvariants}. In fact, $\bM_t^{m \times n}$ is of KLT-type and so has rational singularities; see \cite{BraunGrebLanglisMoragaReductiveQuotientsOfKLTSingularities,ZhuangDirecSummandsKLTSingularities}. In particular, $\bD_t^{m \times n}$ is a variety of Fano-type; see \cite[Lemma 3.1 (3)]{KollarSingulaitieofMMP}. This illustrates how determinantal varieties fit into modern algebraic geometry: they are prominent examples of Fano-type varieties and their singularities. 
 
 The above can be thought of as an optimal answer for what type of singularities determinantal varieties have in characteristic zero. However, in considering arbitrary characteristics, the optimal answer turns out to be given in positive characteristics. Indeed, if $p>0$ then $\bM_t^{m \times n}$ has \emph{(strongly) $F$-regular singularities}; see \cite{HochsterHunekeTCParameterIdealsAndSplitting}, \cf \cite[\S 8]{MaPolstraLecturesOnFsingularities}.\footnote{This implies that $\bM_t^{m \times n}$ is a normal integral variety with Cohen--Macaulay singularities in characteristic zero by reduction modulo $p$, which is how the proof of the aforementioned Hochster--Roberts' theorem works.}  Moreover, at least if $\bM_t^{m \times n}$ is $\bQ$-Gorenstein (\ie $m=n$), this also implies that $\bM_t^{m \times n}$ has log terminal singularities in characteristic zero; see \cite{TakagiInterpretationOfMultiplierIdeals}.  
 In general, one notices that, if say $m \leq n$, there is a pure morphism $\bM_t^{n \times n} \to \bM_t^{m \times n}$ and then one may use \cite[Theorem 1.1]{ZhuangDirecSummandsKLTSingularities} to conclude that $\bM_t^{m \times n}$ is of KLT-type from $\bM_t^{n \times n}$ being log terminal. In conclusion, the $F$-regularity of determinantal varieties in all positive characteristics is an optimal statement about the singularities of determinantal varieties.

 There is, however, a stronger statement to be worked out as $\bM_t^{m \times n}$ is not factorial when $t\geq 2$. Let $\mathfrak{p}_t^{m \times n} \subset R_t^{m \times n}$ be the ideal generated by the $(t-1)$-minors of an arbitrary set of $t-1$ rows of $\bm{x}$. It turns out that $\p_t^{m \times n}$ is a height-$1$ prime ideal of $R_t^{m \times n}$ whose corresponding prime divisor 
 \[
 P_t^{m \times n}\coloneqq V(\p_t^{m \times n}) \subset \bM_t^{m \times n}
 \] 
 freely generates the divisor class group of $\bM_t^{m \times n}$; see \cite[Corollary 8.4]{BrunsVetterDeterminantalRings}. We are therefore interested in knowing the singularities of the \emph{determinantal pure pair} $(\bM_t^{m \times n},P_t^{m \times n})$. The natural question to ask is whether the pair $(\bM_t^{m \times n},P_t^{m \times n})$ is of PLT-type, \ie whether there is an auxiliary $\bQ$-divisor $\Delta$ on $\bM_t^{m \times n}$ with coefficients in $[0,1)$ such that $(\bM_t^{m \times n},P_t^{m \times n}+\Delta)$ is a \emph{purely log terminal} (PLT) pair. Or, even better, is the pair $(\bM_t^{m \times n},P_t^{m \times n})$ \emph{purely $F$-regular} for all $p>0$?
 
 Let $X$ be a normal integral $\bF_p$-scheme and $P\subset X$ be a prime divisor on $X$. Pure $F$-regularity is a notion of $F$-regularity for pure pairs $(X,P)$. It implies that both $X$ and $P$ are $F$-regular but it is much subtler than that as it is more of a ``simultaneous $F$-regularity''. To define the pure $F$-regularity of a pair $(X,P)$, it is convenient to use the adjoint-like test ideals as defined by \cite{TakagiPLTAdjoint}, \cf \cite{SmolkinSubadditivity}. We denote the $e$-th iterate of the Frobenius morphism on $X$ by $F^e\: X \to X$, which is the spectrum of the ring homomorphism $r \mapsto r^{p^e}$.
 
 \begin{terminology}
 Given an ideal $\fra \subset \sO_X$ and an $\sO_X$-linear map $\phi\: F^e_* \sO_X \to \sO_X$ such that $\phi(F^e_* \fra) \subset \fra$, we say that $\fra$ is a \emph{$\phi$-ideal} or equivalently that $\phi$ is \emph{$\fra$-compatible.}
 \end{terminology}
 
 Recall that the \emph{test ideal} $\uptau(X) \subset \sO_X$ is the smallest non-zero ideal that is a $\phi$-ideal for all $\sO_X$-linear maps $\phi \:F^e_* \sO_X \to \sO_X$ and all $e>0$. One says that $X$ is \emph{$F$-regular} if $\uptau(X)=\sO_X$. Likewise, one defines the \emph{test ideal along $P$} and denote it by $\uptau_{\p}(X,P) \subset \sO_X$ as the smallest ideal that is not contained in $\p \coloneqq \sO_X(-P) \subset \sO_X$ and is a $\phi$-ideal for all $\p$-compatible $\sO_X$-linear maps $\phi \: F^e_* \sO_X \to \sO_X$ and all $e>0$. The pair $(X,P)$ is said to be \emph{purely $F$-regular} if $\uptau_{\p}(X,P) = \sO_X$. It turns out that $\uptau_{\p}(X,P) \subset \uptau(X)$, $\uptau_{\p}(X,P)\sO_P \subset \uptau(P)$, and
 \[
\uptau_{\p}(X,P)_x = \begin{cases}
\uptau_{\p_x}(X_x,P_x) & \text{ if } x \in P\\
 \uptau(X)_x = \uptau(X_x) &\text{ if } x \not\in P,
\end{cases}
\]
where $X_x \coloneqq \Spec \sO_{X,x}$. 

Purely $F$-regular pairs and PLT pairs are deeply interconnected. For instance, purely $F$-regular pairs $(X,P)$ are of PLT-type. Conversely, a pure log pair $(X,P+\Delta)$ is PLT if and only if it is of \emph{purely $F$-regular type.} This means that, after spreading $(X,P+\Delta)$ out to a model $(X_A,P_A+\Delta_A)$ over a finite-type $\bZ$-algebra $A\subset \kay$, the fiber $(X_{\mu},P_{\mu}+\Delta_{\mu})$ is purely $F$-regular over all $\mu$ in a dense open subset of $\Spec A$. For details on this circle of ideas, we recommend the excellent survey \cite{TakagiWatanabeFsingularities}; see \cite[Theorem 4.6]{TakagiWatanabeFsingularities}.

 Our main result is that determinantal pairs $(\bM_t^{m \times n},P_t^{m \times n})$ are purely $F$-regular in positive characteristics and of PLT-type in all characteristics. In particular, we answer \cite[Question 2.19]{CarvajalRojasStaeblerTameFundamentalGroupsPurePairs} affirmatively. To be more precise, we establish the following:
 
\begin{mainthm*}[\autoref{thm.MainTheorem}]
The determinantal pair $(\bM_t^{m \times n},P_t^{m \times n})$ is either:
\begin{itemize}
    \item purely $F$-regular if $p >0$, or
    \item purely log terminal if $p = 0$ and $(\bM_t^{m \times n},P_t^{m \times n})$ is log $\bQ$-Gorenstein (\ie $m=n-1$),
    \item of PLT-type if $p=0$, \ie there is a $\bQ$-divisor $\Delta$ with coefficients in $[0,1)$ such that $(X,P+\Delta)$ is a purely log terminal pair. 
\end{itemize}
\end{mainthm*}

Noteworthy, the pure $F$-regularity of $(\bM_t^{m \times n},P_t^{m \times n})$ was worked out in \cite[Examples 2.13 and 2.15]{CarvajalRojasStaeblerTameFundamentalGroupsPurePairs} in the two easiest cases $(m,n,t) = (2,2,2)$ and $(m,n,t) = (2,3,2)$.

As an application of the Main Theorem, we obtain the following; see \cite[Theorems 4.12 and  5.1]{CarvajalRojasStaeblerTameFundamentalGroupsPurePairs}.

\begin{theoremA*}
The tame fundamental group of $\big(\hat{\bM}_t^{m \times n},\hat{P}_t^{m \times n}\big)$ is trivial, where ``$\widehat{-}$'' denotes completion at the origin. In other words, if $L$ is a finite extension of the function field of $\hat{\bM}_t^{m \times n}$ such that the corresponding normalization is \'etale away from $\hat{P}_t^{m \times n}$ but tamely ramified over the generic point of $\hat{P}_t^{m \times n}$, then $L$ equals the function field of $\hat{\bM}_t^{m \times n}$.
\end{theoremA*}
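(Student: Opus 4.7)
The plan is to derive Theorem A as a formal consequence of the Main Theorem, combined with the tameness results \cite[Theorems 4.12 and 5.1]{CarvajalRojasStaeblerTameFundamentalGroupsPurePairs}. The strategy splits according to characteristic, but in both cases the proof reduces to checking that the pair-theoretic singularity input required by those theorems survives completion at the origin.

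First, by the Main Theorem, $(\bM_t^{m \times n}, P_t^{m \times n})$ is purely $F$-regular when $p > 0$ and of PLT-type when $p = 0$. I would next restrict to the local pair at the origin $\bm{0} \in P_t^{m \times n}$ and pass to the completion. On the $F$-singularity side this is essentially automatic: the stalk-wise formula for $\uptau_{\p}(X,P)$ recorded in the excerpt shows that pure $F$-regularity localizes, and for the complete local pair one uses the fact that the test ideal $\uptau_{\p}(X,P)$ is compatible with completion along the maximal ideal on an $F$-finite Noetherian normal domain, so that $(\hat{\bM}_t^{m \times n}, \hat{P}_t^{m \times n})$ remains purely $F$-regular. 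For the characteristic-zero case, the Main Theorem gives a $\bQ$-divisor $\Delta$ on $\bM_t^{m \times n}$ with coefficients in $[0,1)$ such that $(\bM_t^{m \times n}, P_t^{m \times n} + \Delta)$ is PLT, and PLT-ness is preserved by passage to the analytic/complete local model by standard MMP arguments (or, alternatively, one uses that PLT-type is characterized by purely $F$-regular type and then reduces to the preceding case fiberwise).

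With these local pair properties in hand, I would invoke \cite[Theorem 4.12]{CarvajalRojasStaeblerTameFundamentalGroupsPurePairs} for $p > 0$ and \cite[Theorem 5.1]{CarvajalRojasStaeblerTameFundamentalGroupsPurePairs} for $p = 0$, both of which assert triviality of the tame fundamental group of a purely $F$-regular (respectively, PLT-type) pure pair at a closed point contained in the divisor. The reformulation in terms of finite function field extensions is then formal Galois theory: a finite extension $L$ of the function field of $\hat{\bM}_t^{m \times n}$ whose normalization is \'etale away from $\hat{P}_t^{m \times n}$ and tamely ramified over its generic point corresponds to a finite connected cover of $\hat{\bM}_t^{m \times n}\setminus\{\bm{0}\}$ tamely ramified along $\hat{P}_t^{m \times n}$, and triviality of $\pi_1^t$ forces this cover to be trivial, \ie $L$ agrees with the function field.

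The main obstacle is the sorites of verifying that the hypotheses of the cited theorems apply cleanly to the completed pair rather than only to the original variety; in particular, in characteristic zero one must be careful about which notion of ``PLT-type'' is meant after completion and confirm that \cite[Theorem 5.1]{CarvajalRojasStaeblerTameFundamentalGroupsPurePairs} is stated for the complete local situation (or, if not, supply a small spreading-out/approximation argument to bridge the gap). Everything else is plumbing between the Main Theorem and the cited references.
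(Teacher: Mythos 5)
Your proposal is correct and follows essentially the same route as the paper: the authors also deduce Theorem A directly from the Main Theorem by invoking \cite[Theorems 4.12 and 5.1]{CarvajalRojasStaeblerTameFundamentalGroupsPurePairs}, with the localization/completion compatibility of pure $F$-regularity being exactly the content already recorded in \autoref{test_ideal_localization} and the surrounding discussion. The paper offers no further argument beyond this citation, so your additional care about the completed pair is, if anything, more explicit than what appears in the text.
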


\subsection*{Outline of the proof of Main Theorem} 
First, we observe that there is a pure morphism $f\:\bM_t^{k \times l}\to \bM_t^{m \times n}$ such that $l=k+1$ and $P_t^{k \times l}$ is the cycle-theoretic pullback of $P_t^{m \times n}$ along $f$. 
From this, one concludes that if $\big(\bM_t^{k \times l}, P_t^{k \times l}\big)$ is purely $F$-regular then so is $(\bM_t^{m \times n}, P_t^{m \times n})$; see \autoref{prop.SplitHomomorphismDescend}. By \cite[Theorem 2.10]{ZhuangDirecSummandsKLTSingularities}, the same holds for being of PLT-type if $p=0$. In other words, in proving our Main Theorem, we may assume that $n=m+1$ and so that $(\bM_t^{m \times n}, P_t^{m \times n})$ is log Gorenstein as $(m-n)P_t^{m \times n}$ is a canonical divisor for $\bM_t^{m \times n}$

In the log Gorenstein case, the second point of our Main Theorem follows from the first one by reduction modulo $p$; see \cite[Theorem 4.6]{TakagiWatanabeFsingularities}, \cite{TakagiPLTAdjoint}. Thus, it suffices to prove the pure $F$-regularity of log Gorenstein determinantal pairs. To do so, we follow the methods in \cite[\S 8, Example 8.12]{MaPolstraLecturesOnFsingularities}. More precisely, we extend Fedder--Watanabe's criterion (see \cite[Theorem 8.1]{MaPolstraLecturesOnFsingularities}, \cf \cite{FedderWatanabe}) to the setup of graded pure log Gorenstein pairs; see \autoref{cor.CheckingForFpurity}. Then, the pure $F$-regularity of $(\bM_t^{m \times n}, P_t^{m \times n})$ with $n=m+1$ is equivalent to its $F$-purity, the $F$-regularity of both $\bM_t^{m \times n}$ and $P_t^{m \times n}$, and the corresponding projective pure pair $(\bD_t^{m \times n}, Q_t^{m \times n})$ having purely $F$-regular singularities. The second and third condition follow straight away from the $F$-regularity of determinantal varieties. The fourth condition is verified by a rather involved induction. Thus, we are left with verifying the $F$-purity of determinantal pairs, which follows from L. Seccia's work \cite{SecciaKnutsonIdealsOfGenericMatrices}.

\subsection*{Acknowledgements} 
 The authors would like to thank Lisa Seccia and Daniel Smolkin for very useful discussions and Lukas Braun and Axel St\"abler for their comments on previous versions of this work. The authors are very grateful to Ziquan Zhuang for kindly including the PLT case in his paper \cite{ZhuangDirecSummandsKLTSingularities}. Finally, the authors thank the anonymous referees for their corrections.

\section{On Pure $F$-regularity}

In this section, we survey the basics on pure $F$-regularity. The notion of pure $F$-regularity is a somewhat straightforward extension of $F$-regularity but the details are not conveniently located in one place in the literature. We provide the relevant details here. However, such details are well-known to experts and the key ideas are already found across the literature. See for instance \cite{SchwedeTestIdealsInNonQGor,BlickleTestIdealsViaAlgebras,BlickleSchwedeTuckerFSigPairs1,HaraWatanabeFRegFPure,TakagiAdjointIdealsAlongClosedSubvars}, to mention a few. We therefore claim no originality in this section. Experts may skip ahead to \autoref{lem.TheLemmaGorensteinCase} and \autoref{cor.CheckingForFpurity}. We use the following convention throughout this section and the rest of this work.

\begin{convention}
All our rings are assumed to be noetherian $F$-finite $\bF_p$-algebras. Further, $0 \in \bN$. In particular, our rings are excellent and admit a canonical module (see \cite{KunzOnNoetherianRingsOfCharP,Gabber.tStruc}), which we denote by $\omega_R$ on a ring $R$. When $R$ is a normal integral domain, we denote by $K_R$ a chosen canonical divisor, \ie $\omega_R \cong R(K_R)$. For $e\in \bN$, we use the shorthand $q \coloneqq p^e$, and further $q'\coloneqq p^{e'}$, $q_0 \coloneqq p^{e_0}$, etc.
\end{convention}

\begin{notation} 
If $R$ is a ring, we write $\sC_{e,R} \coloneqq \Hom_R(F^e_*R,R)$ for all $e \in \bN$. In particular, we make an identification $\sC_{0,R}=R$. If $\phi \in \sC_{e,R}$ and $\phi' \in \sC_{e',R}$, then $\phi \cdot \phi' \in \sC_{e+e',R}$ denotes the map $\phi \circ F^e_* \phi'$. For instance, if $r\in R$ then $\phi \cdot r \in \sC_{e,R}$ is the map $\phi \circ (F^e_* \cdot r)$, which sends $F^e r'$ to $\phi(F^e_* rr')$. This turns $\sC_R \coloneqq \bigoplus_{e \in \bN} \sC_{e,R}$ into a (non-commutative) $\bN$-graded $R$-algebra---the so-called \emph{full Cartier algebra of $R$}. It is worth noting that $\phi \cdot r^q = r \cdot \phi$ for all $r\in R$, all $\phi \in \sC_{e,R}$, and all $e\in \bN$. We write $\sC_{R,+} \coloneqq \bigoplus_{e >0} \sC_{e,R}$. 
\end{notation}

\begin{definition}
Let $R$ be a ring. A \emph{Cartier $R$-algebra} $\sC$ is a graded $R$-subalgebra of $\sC_R$. In that case, we refer to $(R,\sC)$ as a \emph{pair}. We write $\sC_+ \coloneqq \bigoplus_{e >0} \sC_e$. We denote by $I(R,\sC)$ the set of ideals $\fra \subset R$ such that $\phi(F^e_* \fra) \subset \fra$ for all $\phi \in \sC_e$, $e \in \bN$. We refer to an element of $I(R,\sC)$ as a \emph{$\sC$-ideal}.
\end{definition}

\begin{example}
Let $R$ be a ring and $\fra \subset R$ be an ideal. We define the \emph{Cartier $R$-algebra of $\fra$-maps $\sC_R^{[\fra]}$} by $\sC_{e,R}^{[\fra]} \coloneqq \{\phi \in \sC_{e,R} \mid I(R,\phi) \ni \fra\}$.
\end{example}

\begin{example}
Let $R$ be a ring and $\phi \in \sC_{e_0,R}$ be a fixed map for some $e_0>0$. We define the \emph{principal Cartier $R$-algebra $\sC_R^{\phi}$} as the smallest Cartier $R$-algebra containing $\phi$, \ie $\sC_R^{\phi} = \bigcap_{\phi \in \sC \subset \sC_R} \sC$. We write $I(R,\phi) \coloneqq I(R,\sC^{\phi})$ and refer to its elements as \emph{$\phi$-ideals}.
\end{example}

\begin{example}
Let $R$ be a normal integral domain and $\Delta$ be an effective $\bQ$-divisor on $\Spec R$. Following \cite{SchwedeFAdjunction}, to a map $\phi \in \sC_{e,R}$ we associate an effective $\bZ_{(p)}$-divisor $\Delta_{\phi}$ such that $K_R + \Delta_{\phi} \sim_{\bZ_{(p)}} 0$. The Cartier $R$-algebra $\sC_R^{\Delta}$ is defined by
\[
\sC_{e,R}^{\Delta} \coloneqq \{\phi \in \sC_{e,R} \mid \Delta_{\phi} \geq \Delta\} = \Hom_R\big( F^e_* R(\lceil (q-1) \Delta \rceil), R \big).
\]
We write $I(R,\Delta)\coloneqq I(R,\sC^{\Delta})$ and refer to its elements as $\Delta$-ideals.
\end{example}

\begin{remark} \label{rem.CartierAlgebrasMaps}
Let $R$ be a normal integral domain. For a fixed $\phi \in \sC_{e_0,R}$ with $e_0 >0$, we have $\sC_R^{\phi} \subset \sC_R^{\Delta_{\phi}}$ and an equality if $e_0=1$. Further, if $\p \subset R$ is a height-$1$ prime ideal with corresponding prime divisor $P \coloneqq V(\p) \subset \Spec R$, then $\sC_R^{[\p]} = \sC_R^P$.
\end{remark}

\begin{definition} \label{def.Fpurity}
A pair $(R,\sC)$ is said to be \emph{(sharply) $F$-pure} if $\sC_+ R =R$. That is, $\sC_+$ contains a surjective map $\phi \in \sC_e$ for some $e>0$.
\end{definition}

\begin{remark}
Given a pair $(R,\sC)$ and an ideal $\fra \subset R$, then $\sC_+\fra$ denotes the ideal of $R$ generated by elements $\phi(F^e_*a)$ where $\phi \in \sC_e$ for some $e>0$ and $a\in \fra$. That is, $\sC_+\fra = \sum_{e>0} \sC_e\fra$ where $\sC_e \fra \subset R$ denotes the ideal $\sum_{\phi \in \sC_e} \phi(F^e_* \fra)$. In particular, it is not obvious that $\sC_+ R=R$ implies the existence of a surjective map $\phi \in \sC_e$ for some $e>0$, as claimed in \autoref{def.Fpurity}. A proof of this can be found in \eg \cite[Proposition 3.6]{BlickleTestIdealsViaAlgebras} (\cf proof of \autoref{thm.PureFregularityTestelements} below).  
\end{remark}

\begin{proposition} \label{prop.BasicsCompatibleIdeals}
Let $(R,\sC$) be a pair. Then:
\begin{enumerate}
\item $(0), (1) \in I(R,\sC)$.
\item The set $I(R,\sC)$ is closed under sum and intersection.
\item If $\fra \in I(R,\sC)$ then $\fra : \frab \in I(R,\sC)$ for any ideal $\frab \subset R$. In particular, $\Ass_R (R/\fra) \subset I(R,\sC)$ and $\sqrt{\fra} \in I(R,\phi)$.
\item If $(R,\sC)$ is $F$-pure, then $I(R,\sC)$ is a finite lattice of radical ideals.
\end{enumerate}
\end{proposition}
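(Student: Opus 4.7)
My plan is to address parts (a), (b), (c) by direct manipulations and to reduce (d) to (c) plus a known finiteness theorem for compatibly split primes.

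For (a), the zero and unit ideals are trivially preserved by every $\phi \in \sC_e$. For (b), closure under sum is immediate from $R$-linearity of the maps $\phi$; for intersection, if $x \in \fra_1 \cap \fra_2$ then $\phi(F^e_* x)$ lies in both $\fra_1$ and $\fra_2$. For the colon ideal in (c), the core computation is that for $\phi \in \sC_e$, $x \in \fra:\frab$, and $b \in \frab$,
\[
b \cdot \phi(F^e_* x) = \phi\bigl(F^e_* b^q x\bigr) = \phi\bigl(F^e_* b^{q-1}(bx)\bigr) \in \phi(F^e_* \fra) \subset \fra,
\]
since $bx \in \fra$; hence $\phi(F^e_* x) \in \fra:\frab$. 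Because every associated prime of $R/\fra$ is of the form $\fra:(x)$, we get $\Ass_R(R/\fra) \subset I(R,\sC)$, and then $\sqrt{\fra} = \bigcap_{\frp \in \Min \fra} \frp$ lies in $I(R,\sC)$ by combining with the intersection part of (b), since $\Min \fra \subset \Ass_R(R/\fra)$.

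For (d), I first dispatch radicality. Given an $F$-pure pair $(R,\sC)$ with a surjective $\phi \in \sC_+$, any $\phi' \in \sC$ descends along $R \onto R/\fra$ (this is precisely what the $\sC$-ideal condition on $\fra$ guarantees), yielding a Cartier algebra $\bar\sC$ on $R/\fra$ containing the surjection $\bar\phi$. Hence $(R/\fra,\bar\sC)$ is $F$-pure; in particular $R/\fra$ is $F$-pure as a ring, and being noetherian and $F$-finite it is therefore reduced, so $\fra$ is radical.

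The finiteness portion of (d) is the main obstacle. Since every $\sC$-ideal is radical, it is the intersection of its minimal primes, each of which sits in $I(R,\sC)$ by (c). Consequently, the lattice $I(R,\sC)$ is determined by the subset of $\sC$-\emph{primes}, and it suffices to show that there are only finitely many of the latter. Every $\sC$-prime is compatible with the surjective $\phi \in \sC_+$ provided by $F$-purity, and by twisting by $r \in R$ with $\phi(F^e_* r)=1$ (using $\phi \cdot r \in \sC_e$), we may replace $\phi$ by an honest splitting of $F^e$. I would then invoke the classical finiteness theorem for compatibly split primes of a single Frobenius splitting, in the generality for Cartier algebras (see, e.g., \cite{SchwedeTestIdealsInNonQGor,BlickleTestIdealsViaAlgebras}), to conclude that the set of $\phi$-compatible primes, and hence the set of $\sC$-primes, is finite.
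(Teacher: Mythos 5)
Your proof is correct and follows the same standard route that the paper itself simply outsources to \cite{SmithZhang}: parts (a)--(c) by the direct computations you give (the colon-ideal trick $b\cdot\phi(F^e_*x)=\phi(F^e_*b^{q-1}(bx))$ is exactly the key step), and for (d) radicality via descent of a surjective $\phi\in\sC_+$ to $R/\fra$ plus the reduction of finiteness to finiteness of $\sC$-primes, which --- like the paper --- you correctly flag as resting on the external theorem on finiteness of compatibly split primes. No gaps.
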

\begin{proof}
    See for instance \cite[Proposition 3.3]{SmithZhang}. The finiteness part in (d) is not obvious; see the paragraph after the proof of \cite[Proposition 3.3]{SmithZhang} and the references therein.
\end{proof}

\begin{definition}
Let $(R,\sC)$ be a pair and $\fra \in I(R,\sC)$. We say that:
\begin{enumerate}
    \item A prime ideal $\p \in I(R,\sC)$ is a \emph{center of $F$-purity} of $(R,\sC)$ if $\sC_+R \not\subset \p$.
    \item The pair $(R,\sC)$ is \emph{non-degenerate along $\fra$} if the minimal primes of $\fra$ are centers of $F$-purity of $(R,\sC)$.
    \item The pair $(R,\sC)$ is \emph{purely $F$-regular along $\fra$} if it is non-degenerate along $\fra$ and every proper ideal in $I(R,\sC)$ is contained in at least one minimal prime of $\fra$. In that case, we refer to the minimal primes of $\fra$ as the \emph{maximal centers of $F$-purity} of $(R,\sC)$.
    \item  The pair $(R,\sC)$ is \emph{(strongly) $F$-regular} if $(R,\sC)$ is purely $F$-regular along $(0)$.
    \item The ring \emph{$R$ is purely $F$-regular along $\fra$ (resp. $F$-regular)} if $(R,\sC^{[\fra]}_R)$ (resp. $(R,\sC_R)$) is purely $F$-regular along $\fra$ (resp. $F$-regular).
\end{enumerate}
\end{definition}

\begin{remark} \label{rem.RegularAmbient}
Note that if $(R,\sC)$ is purely $F$-regular along $\fra$ then $R$ is purely $F$-regular along $\fra$. Further,
if $R$ is purely $F$-regular along $\fra \subset R$ then $R/\fra$ is $F$-regular and the converse holds if $R$ is regular by Kunz's theorem (any map $F^e_* R/\fra \to R/\fra$ lifts to an $\fra$-map $F^e_* R \to R$ if $F^e_*R$ is a projective $R$-module).  
\end{remark}

\begin{remark}
    Let $(R,\sC)$ be a pair. By iterating, we may form a descending chain of $\sC$-ideals ideals $R \supset \sC_+R \supset \sC_+^2 R \supset \sC_+^3 R \supset \cdots$, which turns out to stabilize \cite[Proposition 2.13]{BlickleTestIdealsViaAlgebras}. The stable ideal is often denoted by $\upsigma(R,\sC)$ and referred to as the \emph{non-$F$-pure} ideal of $(R,\sC)$. One readily verifies that $\upsigma(R,\sC)$ and $\sC_+R$ cut out the same closed subset of $\Spec R$ and so they are equal up to radical. Moreover, every sup-ideal of $\sC_+R$ is necessarily a $\sC$-ideal. Thus, the prime ideals in $I(R,\sC)$ are exactly the centers of $F$-purity and the points of $V\big(\upsigma(R,\sC)\big)$. In particular, $F$-purity for $(R,\sC)$ means that all prime ideals in $I(R,\sC)$ are centers of $F$-purity.
\end{remark}

\begin{proposition} \label{prop.MinPrimesareCoprime}
Let $(R,\sC)$ be a pair and $\fra \in I(R,\sC)$. If $(R,\sC)$ is purely $F$-regular along $\fra$ then $(R,\sC)$ is $F$-pure and so $\fra$ is radical. Moreover, the minimal primes of $\fra$ are pairwise coprime. 
\end{proposition}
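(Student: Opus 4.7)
The plan is to leverage purely the closure properties of the lattice $I(R,\sC)$ recorded in \autoref{prop.BasicsCompatibleIdeals} together with the definition of ``purely $F$-regular along $\fra$,'' which says every proper $\sC$-ideal sits inside some minimal prime of $\fra$. Structurally, the three assertions (``$F$-pure,'' ``$\fra$ radical,'' ``minimal primes coprime'') fall sequentially out of this definition once one identifies the right proper $\sC$-ideal to test against it.

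First, to deduce $F$-purity, I want to show $\sC_+ R = R$. The key observation is that $\sC_+ R$ itself lies in $I(R,\sC)$: any generator of $\sC_+ R$ has the form $\phi(F^e_* r)$ with $\phi \in \sC_e$, $e > 0$, and applying any $\psi \in \sC_{e'}$ yields $(\psi \cdot \phi)(F^{e+e'}_* r)$, which again lies in $\sC_+ R$ because $\psi \cdot \phi \in \sC_{e+e'} \subset \sC_+$. (This is essentially the statement already remarked on in the discussion of $\upsigma(R,\sC)$ above.) Now if $\sC_+ R$ were a proper ideal, the purely $F$-regular hypothesis would force $\sC_+ R \subseteq \p$ for some minimal prime $\p$ of $\fra$, directly contradicting the non-degeneracy condition that such a $\p$ be a center of $F$-purity. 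Hence $\sC_+ R = R$, and $(R,\sC)$ is $F$-pure. The radicality of $\fra$ then follows for free: \autoref{prop.BasicsCompatibleIdeals}(d) tells us every $\sC$-ideal in an $F$-pure pair is radical, and $\fra \in I(R,\sC)$ by hypothesis.

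For the coprimality of the minimal primes $\p_1,\dots,\p_k$ of $\fra$, I plan to argue by contradiction. Suppose $\p_i + \p_j \neq R$ for some $i \neq j$. By \autoref{prop.BasicsCompatibleIdeals}(b), $I(R,\sC)$ is closed under sums, so $\p_i + \p_j \in I(R,\sC)$ is a proper $\sC$-ideal; the purely $F$-regular hypothesis along $\fra$ then places it inside some minimal prime $\p_\ell$ of $\fra$. But then $\p_i \subseteq \p_\ell$ and $\p_j \subseteq \p_\ell$, and by minimality $\p_i = \p_\ell = \p_j$, contradicting $i \neq j$.

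No step looks genuinely difficult; the only non-formal point is the verification that $\sC_+ R \in I(R,\sC)$, which is the crux of turning the purely-$F$-regular hypothesis into $F$-purity. Once that is observed, the rest is clean bookkeeping with sums, intersections, and minimal primes in the finite radical lattice $I(R,\sC)$.
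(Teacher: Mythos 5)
Your proof is correct, and while the $F$-purity step matches the paper's in substance, your coprimality argument is genuinely different and simpler. For $F$-purity, the paper passes to a maximal ideal $\fram \supseteq \sC_+R$ and notes $\fram \in I(R,\sC)$, whereas you observe directly that $\sC_+R$ itself lies in $I(R,\sC)$ (your verification is fine; even more simply, any ideal $\frab \supseteq \sC_+R$ satisfies $\phi(F^e_*\frab) \subseteq \sC_e R \subseteq \sC_+R \subseteq \frab$ for $e>0$, which is the remark about sup-ideals of $\sC_+R$ already in the paper). Either way pure $F$-regularity places this proper $\sC$-ideal inside a minimal prime of $\fra$, contradicting non-degeneracy, and radicality then follows from \autoref{prop.BasicsCompatibleIdeals}(d) exactly as you say. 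The real divergence is the coprimality step: the paper picks elements $r_i \in \bigl(\bigcap_{j\neq i}\p_j\bigr)\setminus\p_i$ by prime avoidance, shows $\sC r = R$ for $r=\sum_i r_i$, and then localizes at an arbitrary maximal ideal to see it can contain at most one $\p_i$. You instead feed the $\sC$-ideal $\p_i+\p_j$ directly into the defining property of pure $F$-regularity: if proper, it lies in some minimal prime $\p_\ell$, and minimality forces $\p_i=\p_\ell=\p_j$. This is shorter, avoids prime avoidance entirely, and uses only the lattice structure of $I(R,\sC)$; the one hypothesis you leave implicit when invoking closure under sums is that $\p_i,\p_j \in I(R,\sC)$ in the first place, which holds by \autoref{prop.BasicsCompatibleIdeals}(c) since minimal primes of $\fra$ are associated primes of $R/\fra$. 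It would be worth making that one line explicit, but there is no gap.
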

\begin{proof}
Suppose that $(R,\sC)$ is purely $F$-regular along $\fra$ but not $F$-pure. Then, $\sC_+ R$ is contained in a maximal ideal $\fram \subset R$, which is necessarily in $I(R,\sC)$ as $\phi(F_*^e \fram) \subset \phi(F_*^e R) \subset \sC_+ R \subset \fram$ for all $\phi \in \sC_e$ and all $e>0$. Therefore, $\fram$ is contained in a minimal prime of $\fra$ as $(R,\sC)$ is purely $F$-regular. But then so is $\sC_+R$, which violates that $(R,\sC)$ is non-degenerate along $\fra$. 

To see why the minimal primes of $\fra$ are pairwise coprime, let $\p_1, \ldots, \p_k$ be the minimal primes of $\fra$ and pick
$r_i \in \big(\bigcap_{j \neq i} \p_j\big) \setminus \p_i$ (assuming $k>1$) using prime avoidance. Then, $r \coloneqq \sum_{i=1}^k r_i \notin \bigcup_{\p \in \Ass_R(R/\fra)} \p $. Consider the following ideal $\sC r = Rr + \sC_+r$, which contains $r$ and belongs to $I(R,\sC)$. In particular, $\sC r \not\subset \bigcup_{\p \in \Ass_R(R/\fra)} \p $ and so $\sC r \not\subset \p$ for all $\p \in \Ass_R(R/\fra)$ by prime avoidance. Therefore, $\sC r=R$ by the pure $F$-regularity of $(R,\sC)$ along $\fra$. However, $\sC r = \sC r_1 + \cdots + \sC r_k$. In particular, if $\fram$ is an arbitrary maximal ideal of $R$, we have that $\sC r_i \not\subset \fram$ for some $i = 1,\ldots,k$. On the other hand,  $\sC r_i \subset \p_j$ for all $j \neq i$ and so $\p_j \not\subset \fram$ for all $j \neq i$. In conclusion, $\fram$ contains at most one minimal prime of $\fra$. Since $\fram$ is arbitrary, then the minimal primes of $\fra$ are pairwise coprime.
\end{proof}

The following theorem is arguably the most important result in the theory of $F$-regularity. It has appeared with different levels of generality in the literature, \eg \cite[Theorem 5.10]{HochsterHunekeFRegularityTestElementsBaseChange}, \cite[\S6]{SchwedeFAdjunction}, \cite[Proposition 3.21]{SchwedeTestIdealsInNonQGor}, and \cite[Appendix A]{SmolkinSubadditivity}. The proof of the version we present here can be found in \cite{SmolkinGeneralAdjointIdeals}.

\begin{theorem}\label{them.ExistenceTestIdeals}
Let $(R,\sC)$ be a pair and $\fra \in I(R,\sC)$ be a radical ideal such that $(R,\sC)$ is non-degenerate along $\fra$. There exists $c \notin \bigcup_{\p \in \Ass_R(R/\fra)}\p$ such that: for all $r \notin \bigcup_{\p \in \Ass_R(R/\fra)}\p$ there exists $e>0$ and $\phi \in \sC_e$ such that $\phi(F^e_* r)=c$. Furthermore, the ideal $\uptau_{\fra}(R,\sC) \coloneqq \sC c = Rc + \sC_+ c$ 
has the following properties:
\begin{enumerate}
    \item $\uptau_{\fra}(R,\sC) \not\subset \bigcup_{\p \in \Ass_R(R/\fra)}\p$,
    \item $\uptau_{\fra}(R,\sC) \in I(R,\sC)$, and
    \item if $\frab \in I(R,\sC)$ is such that $\frab \not\subset \bigcup_{\p \in \Ass_R(R/\fra)}\p$ then $\uptau_{\fra}(R,\sC) \subset \frab$.
\end{enumerate}
\end{theorem}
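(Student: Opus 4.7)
The plan is to construct the test element $c$ via a noetherian minimality argument and then extract the uniformity property afterwards. Write $W := R \setminus \bigcup_{\p \in \Ass_R(R/\fra)} \p$, which is multiplicative since $\fra$ is radical (the elements of $W$ are exactly the non-zerodivisors modulo $\fra$). Let $\mathcal T \subseteq I(R,\sC)$ be the family of $\sC$-ideals meeting $W$, equivalently those not contained in $\bigcup \p$ (by prime avoidance). This family is non-empty (it contains $R$) and closed under pairwise intersection: if $I\cap J \subset \p$ for some $\p \in \Ass_R(R/\fra)$, primality forces $I\subset \p$ or $J\subset \p$. Crucially, the non-degeneracy hypothesis is exactly the statement that $\sC_+ R \in \mathcal T$; it prevents the minimum we will produce from collapsing into an associated prime.

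To produce the minimum, I would pass to the localization $R' := W^{-1}R$, where each $\p R'$ becomes maximal and $R'/\fra R'$ is Artinian. The Cartier algebra $\sC$ localizes to $\sC' \subseteq \sC_{R'}$, and the sublattice of $\sC'$-ideals not contained in any $\p R'$ inherits DCC from the Artinian quotient $R'/\fra R'$. Hence this sublattice admits a minimum $I_0'$; pulling back along $R \to R'$ yields a minimum $I_0 \in \mathcal T$. Choosing any $c \in I_0 \cap W$ by prime avoidance, the principal $\sC$-ideal $\sC c = Rc + \sC_+ c$ belongs to $\mathcal T$ and is contained in $I_0$, so minimality forces $\sC c = I_0$.

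The main technical obstacle is extracting the uniformity statement: for each $r \in W$, the element $c$ should be realized as $\phi(F^e_* r)$ for a \emph{single} $\phi \in \sC_e$ with $e>0$. The containment $c \in I_0 \subseteq \sC r = Rr + \sC_+ r$ only yields a finite sum presentation $c = \alpha r + \sum_{i} \psi_i(F^{e_i}_* r)$ with $\psi_i \in \sC_{e_i}$ of possibly different positive degrees. Consolidating this into one $\phi \in \sC_e$ for large $e$ is the delicate step: one absorbs scalars using $a\psi \in \sC_e$ and $\psi \cdot b \in \sC_e$, realigns Frobenius degrees via the commutation $\phi \cdot r^q = r \cdot \phi$ together with composition in the graded algebra $\sC$, and eliminates the degree-zero contribution $\alpha r$ by post-composing with a positive-degree map supplied by the non-degeneracy hypothesis (applied at each $\p \in \Ass_R(R/\fra)$). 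This consolidation is where the Cartier-algebra formalism earns its keep and constitutes the non-trivial content of the theorem.

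Once $c$ is fixed, the three properties are immediate: (a) holds because $c \in W \subseteq R \setminus \bigcup \p$; (b) because $\sC c = Rc + \sC_+ c$ is by construction a $\sC$-ideal; and (c) because any $\frab \in I(R,\sC)$ with $\frab \not\subset \bigcup \p$ lies in $\mathcal T$, and hence contains its minimum $\sC c = \uptau_\fra(R,\sC)$.
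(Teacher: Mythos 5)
There is a genuine gap at the heart of the proposal: the existence of a smallest $\sC$-ideal not contained in $\bigcup_{\p\in\Ass_R(R/\fra)}\p$ is the hard content of this theorem, and your localization argument for it collapses. In $R'=W^{-1}R$ the maximal ideals are \emph{exactly} the ideals $\p R'$ for $\p\in\Ass_R(R/\fra)$: every prime disjoint from $W$ is contained in some $\p$ by prime avoidance, and the $\p$ are incomparable since $\fra$ is radical. Hence every proper ideal of $R'$ lies in some $\p R'$, so the sublattice you consider is $\{R'\}$; its ``minimum'' is the unit ideal, whose contraction to $R$ is $R$ itself — and $R$ is the minimum of $\mathcal{T}$ only when the pair is already purely $F$-regular along $\fra$. (Equivalently: every $I\in\mathcal{T}$ extends to the unit ideal of $R'$, so localizing at $W$ destroys all the information.) Closure of $\mathcal{T}$ under finite intersections is correct but does not produce a minimal element in a Noetherian ring, since descending chains of ideals need not stabilize, and no Artinian quotient controls ideals transverse to $\fra$. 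This is precisely why the paper does not prove the theorem but defers to \cite{SmolkinGeneralAdjointIdeals} (ultimately resting on Hochster--Huneke/Schwede-type test-element arguments, which use excellence/$F$-finiteness in an essential way): there is no elementary lattice-theoretic proof.

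The second gap is that the uniformity statement — a single $\phi\in\sC_e$ with $e>0$ and $\phi(F^e_*r)=c$ — is described but not proved. You correctly identify the obstructions (mixed Frobenius degrees and the degree-zero term $\alpha r$), but the resolution is not a formal consolidation available for an arbitrary generator $c$ of a hypothetical minimum; in the actual arguments one \emph{chooses} $c$ so that the uniform statement holds (e.g.\ as the image of a known test element under a positive-degree map), and only then deduces that $\sC c$ is the minimum. Compare the localization-and-powers trick in the proof of \autoref{thm.PureFregularityTestelements}, which handles only the special case $\sC_+r=R$ and does not extend to equating a sum of mixed-degree terms with a prescribed element $c$. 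As written, the proposal defers the two essential steps of the theorem rather than supplying them.
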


\begin{definition}
We refer to the ideal $\uptau_{\fra}(R,\sC)$ in \autoref{them.ExistenceTestIdeals} as the \emph{test ideal of $(R,\sC)$ along $\fra$}---it is the smallest element of $I(R,\phi)$ that is not contained in any minimal prime of $\fra$. We refer to any such element $c$ in \autoref{them.ExistenceTestIdeals} as a \emph{principal test element of $(R,\sC)$ along $\fra$}. We refer to the elements of $\uptau_{\fra}(R,\sC) \setminus \bigcup_{\p \in \Ass_R(R/\fra)} \p$  as \emph{test elements of $(R,\sC)$ along $\fra$}. When $\fra=(0)$, we write $\uptau(R,\sC)\coloneqq \uptau_{(0)}(R,\sC)$ and refer to it as the \emph{test ideal of $(R,\sC)$}.
\end{definition}

\begin{remark}
If $c$ a principal test element then so is $\phi(F^e_* c)$ for all $\phi \in \sC_e$ and all $e\in \bN$. Thus, test elements are sums of principal test elements and the test ideal is generated by principal test elements.
\end{remark}

The following proposition readily follows from the definitions.

\begin{proposition}
Let $(R,\sC)$ be a pair and $\fra \in I(R,\sC)$ be a radical ideal such that $(R,\sC)$ is non-degenerate along $\fra$. Then,
\[
I(R,\sC) = \{ \frab \in I(R,\sC) \mid \frab \subset \p \text{ for some } \p \in \Ass_R(R/\fra)\} \sqcup \{ \frab \in I(R,\sC) \mid \uptau_{\fra}(R,\sC) \subset \frab\}
\]
so that $(R,\sC)$ is purely $F$-regular along $\fra$ if and only if $\uptau_{\fra}(R,\sC) = R$. 
\end{proposition}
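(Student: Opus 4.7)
The plan is to deduce the dichotomy directly from the three properties of $\uptau_{\fra}(R,\sC)$ recorded in \autoref{them.ExistenceTestIdeals}, together with prime avoidance, and then read off the characterization of pure $F$-regularity as an immediate consequence. Throughout, the hypothesis that $\fra$ is radical is used silently through the identification of $\Ass_R(R/\fra)$ with the finite set of minimal primes of $\fra$.

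First, I would verify that the two sets on the right-hand side are disjoint. Suppose $\frab$ lies in both. Then $\uptau_{\fra}(R,\sC) \subseteq \frab \subseteq \p$ for some $\p \in \Ass_R(R/\fra)$, which directly contradicts property (a), stating that $\uptau_{\fra}(R,\sC) \not\subset \bigcup_{\p \in \Ass_R(R/\fra)} \p$. Next, I would show that every $\frab \in I(R,\sC)$ belongs to one of the two sets. Since $\Ass_R(R/\fra)$ is finite, prime avoidance gives the required dichotomy: either $\frab \subset \p$ for some $\p \in \Ass_R(R/\fra)$, or $\frab \not\subset \bigcup_{\p \in \Ass_R(R/\fra)} \p$. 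In the first case $\frab$ lies in the first set by definition; in the second case property (c) of \autoref{them.ExistenceTestIdeals} forces $\uptau_{\fra}(R,\sC) \subseteq \frab$, placing $\frab$ in the second set.

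For the final characterization, note that under the standing non-degeneracy assumption, pure $F$-regularity along $\fra$ is, by definition, the requirement that every proper $\sC$-ideal be contained in some minimal prime of $\fra$, i.e., that the second set of the disjoint union contain no proper ideal of $R$. If $\uptau_{\fra}(R,\sC) = R$, then the second set reduces to $\{R\}$, so every proper $\sC$-ideal belongs to the first set and $(R,\sC)$ is purely $F$-regular along $\fra$. Conversely, if $\uptau_{\fra}(R,\sC) \subsetneq R$, then by property (b) it is itself a proper $\sC$-ideal in the second set, and by (a) it is not contained in any $\p \in \Ass_R(R/\fra)$, so pure $F$-regularity fails.

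I do not anticipate any substantive obstacle: the proposition is essentially bookkeeping once \autoref{them.ExistenceTestIdeals} is in hand. The only mild point of care is the invocation of prime avoidance, which requires the finiteness of $\Ass_R(R/\fra)$, and the use of the radicality of $\fra$ to align $\Ass_R(R/\fra)$ with the set of minimal primes appearing in the definition of pure $F$-regularity.
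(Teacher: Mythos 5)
Your proof is correct and is exactly the argument the paper has in mind — the paper gives no proof, stating only that the proposition "readily follows from the definitions," and your write-up supplies precisely the intended bookkeeping: disjointness from property (a) of \autoref{them.ExistenceTestIdeals}, coverage from prime avoidance plus property (c), and the characterization of pure $F$-regularity from the resulting dichotomy.
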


\begin{theorem} \label{thm.PureFregularityTestelements}
Let $(R,\sC)$ be a pair and $\fra \in I(R,\sC)$ be a radical ideal such that $(R,\sC)$ is non-degenerate along $\fra$. Let $b$ be a test element of $(R,\sC)$ along $\fra$. Then, $\uptau_{\fra}(R,\sC) = \sC b = Rb + \sC_+b$.  
Furthermore, $(R,\sC)$ is purely $F$-regular along $\fra$ if and only if $1$ is a principal test element of $(R,\sC)$ along $\fra$.
\end{theorem}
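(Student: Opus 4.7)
My plan is to dispatch the two claims of the theorem separately. For the equality $\uptau_{\fra}(R,\sC) = \sC b$, I would simply unwind \autoref{them.ExistenceTestIdeals}. On the one hand, $b$ is a test element, so $b \in \uptau_{\fra}(R,\sC)$, and by part (b) of \autoref{them.ExistenceTestIdeals} the ideal $\uptau_{\fra}(R,\sC)$ is a $\sC$-ideal; since $\sC b = Rb + \sC_{+}b$ is, by construction, the smallest $\sC$-ideal containing $b$, this gives $\sC b \subseteq \uptau_{\fra}(R,\sC)$. Conversely, $\sC b$ is itself a $\sC$-ideal and contains $b \notin \bigcup_{\p \in \Ass_R(R/\fra)}\p$, so $\sC b$ is not contained in $\bigcup_{\p}\p$; the minimality clause (c) of \autoref{them.ExistenceTestIdeals} then forces $\uptau_{\fra}(R,\sC) \subseteq \sC b$.

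For the ``furthermore'' part, the $(\Leftarrow)$ direction is immediate: if $1$ is a principal test element, then directly from \autoref{them.ExistenceTestIdeals} we get $\uptau_{\fra}(R,\sC) = \sC \cdot 1 = R$, so the pair is purely $F$-regular along $\fra$. For $(\Rightarrow)$, assume $\uptau_{\fra}(R,\sC) = R$ and fix an arbitrary $r \notin \bigcup_{\p}\p$. Since $r \in R = \uptau_{\fra}(R,\sC)$ is a test element, the first half of the theorem yields $\sC r = R$; hence $1 \in Rr + \sC_{+}r$, and we may write $1 = \alpha r + \sum_i \psi_i(F^{e_i}_{*}r)$ with $\psi_i \in \sC_{e_i}$ and $e_i > 0$. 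By \autoref{prop.MinPrimesareCoprime} the pair is $F$-pure, so there exists $\phi_0 \in \sC_{e_0}$ with $\phi_0(F^{e_0}_{*}1) = 1$; applying $\phi_0$ to the identity $F^{e_0}_{*}1 = F^{e_0}_{*}(\alpha r + \sum_i \psi_i(F^{e_i}_{*}r))$ and using the Cartier composition $(\phi_0 \cdot \psi_i)(F^{e_0+e_i}_{*}r) = \phi_0(F^{e_0}_{*}\psi_i(F^{e_i}_{*}r))$ absorbs the ``$Rr$'' summand, producing a representation $1 = \sum_j \theta_j(F^{f_j}_{*}r)$ with every $f_j > 0$.

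The main obstacle is the \emph{combining step}: extracting from the finite sum $1 = \sum_j \theta_j(F^{f_j}_{*}r)$ a single map $\phi \in \sC_{e}$ with $e>0$ and $\phi(F^{e}_{*}r) = 1$. I would handle this by lifting each $\theta_j$ into a common higher degree using the iterated Frobenius splittings $\phi_0^{k} \in \sC_{ke_0}$ supplied by $F$-purity; these compatible splittings allow one to rigidify the grading and then patch the individual $\theta_j$'s into a single Cartier map. This is exactly the maneuver underlying \cite[Proposition~3.6]{BlickleTestIdealsViaAlgebras}, which establishes the analogous statement that $\sC_{+}R = R$ implies the existence of a single surjective map in $\sC_{+}$. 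Once a single $\phi$ realizing $\phi(F^{e}_{*}r) = 1$ has been produced for every $r \notin \bigcup_{\p}\p$, the element $1$ satisfies the definition of a principal test element, completing the argument.
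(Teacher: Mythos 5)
Everything up to the combining step is correct and is essentially the paper's own argument. The identification $\uptau_{\fra}(R,\sC)=\sC b$ follows from \autoref{them.ExistenceTestIdeals} exactly as you say; the $(\Leftarrow)$ direction of the ``furthermore'' is immediate; and using $F$-purity to absorb the degree-zero term $\alpha r$ is equivalent to the paper's observation that $\sC_+\sC r=\sC_+R=R$ forces $\sC_+r=R$. (Note, though, that your version invokes an actual splitting $\phi_0\in\sC_{e_0}$ with $\phi_0(F^{e_0}_*1)=1$; extracting a single such map from the ideal identity $\sC_+R=R$ is itself an instance of the very combining lemma you are trying to prove, which is why the paper phrases this step purely ideal-theoretically.)

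The gap is in the combining step, which you rightly flag as the main obstacle but then resolve by a mechanism that does not work. Composing $\theta_j\in\sC_{f_j}$ with $\phi_0^{k}$ only produces maps in degrees $f_j+ke_0$; if the $f_j$ lie in different residue classes modulo $e_0$, no amount of ``lifting by iterated splittings'' brings the $\theta_j$ to a common degree, so they cannot be summed into a single element of some $\sC_e$ this way. (Even when degrees can be matched, $(\phi_0^{k}\cdot\theta_j)(F^{ke_0+f_j}_*r)=\phi_0^{k}\big(F^{ke_0}_*\theta_j(F^{f_j}_*r)\big)$ is no longer $\theta_j(F^{f_j}_*r)$; one must additionally twist by $r^{q_0^{k}-1}$ to preserve the value.) This is also not the maneuver in the cited Proposition 3.6 of Blickle. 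The argument that does work---and is the paper's explicit Claim---is a localization argument: from $\sum_j\theta_j(F^{f_j}_*Rr)=R$, localize at an arbitrary maximal ideal $\fram$; since a sum of proper ideals of a local ring is proper, a single $\theta_{j}$ already satisfies $\theta_{j}(F^{f_{j}}_*R_{\fram}r)=R_{\fram}$, and hence $\theta_{j}^{N/f_{j}}(F^{N}_*R_{\fram}r)=R_{\fram}$ for $N\coloneqq f_1\cdots f_m$. Gluing over all $\fram$ gives $\sum_j\theta_j^{N/f_j}(F^{N}_*Rr)=R$, and now all maps live in the single degree $N$, so writing $1=\sum_j\theta_j^{N/f_j}(F^{N}_*r_jr)$ exhibits $\sum_j\theta_j^{N/f_j}\cdot r_j\in\sC_N$ as a map sending $F^{N}_*r$ to $1$. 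With this substituted for your lifting step, the proof is complete.
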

\begin{proof}
Let $r$ be a regular element modulo $\fra$, \ie $r \notin \bigcup_{\p \in \Ass_R(R/\fra)} \p$. 
One readily verifies that $\sC r = Rr + \sC_+r \not\subset \bigcup_{\p \in \Ass_R(R/\fra)} \p$ and $\sC r \in I(R,\sC)$. In other words, $\uptau_{\fra}(R,\sC) \subset \sC r$. Moreover, if $r \in \frab$ for some $\frab \in I(R,\sC)$ then $\sC r \subset \frab$. Then it follows that $\uptau_{\fra}(R,\sC)=\sC r$ if $r\in \uptau_{\fra}(R,\sC)$ (\ie if $r$ is a test element of $(R,\sC)$ along $\fra$).

We show now that $1$ is a principal test element if $(R,\sC)$ is purely $F$-regular along $\fra$. Let $r$ be a regular element modulo $\fra$. Since $\uptau_{\fra}(R,\sC)=R$, then $\sC r=R$. In particular, $\sC_+ \sC r = \sC_+R = R$ and so $\sC_+ r = R$, where we used that $(R,\sC)$ is $F$-pure. Since $\sC_+r = \sum_{e>0}\sum_{\phi \in \sC_e} \phi(F^e_* Rr)$, then there are $e_1,\ldots,e_m >0$ and $\phi_i \in \sC_{e_i}$ for all $i =1,\ldots,m$ such that 
\begin{equation} \label{eqn.1}
\phi_1(F^{e_1}_* Rr) + \cdots + \phi_m(F^{e_m}_* Rr)=R.
\end{equation}
Set $e \coloneqq e_1 \cdots e_m$. 
\begin{claim}
$\phi_1^{e/e_1}(F^{e}_* Rr) + \cdots + \phi_m^{e/e_m}(F^{e}_* Rr) = R
$
\end{claim}
\begin{proof}[Proof of claim]
The equality can be checked locally at every maximal ideal $\fram$. Localizing \autoref{eqn.1} at $\fram$ yields
\[
\phi_1(F^{e_1}_* R_{\fram}r) + \cdots + \phi_m(F^{e_m}_* R_{\fram}r)=R_{\fram}
\]
where $\phi_i \: F^{e_i}_* R_{\fram} \to R_{\fram}$ is the localization of $\phi_i$ at $\fram$, which sends $F^{e_i}_* \frac{r}{u} = F^{e_i}_* \frac{ru^{q_i-1}}{u^{q_i}} = \frac{1}{u} F^{e_i}_* ru^{q_i-1}$ to $\frac{1}{u} \phi_i\big(F^{e_i}_* ru^{q_i-1}\big)$. In particular, for some index $i = i_{\fram} \in \{1,\ldots,m\}$ we have that $\phi_i(F^{e_i}_* R_{\fram}r) = R_{\fram}$ (as the sum of proper ideals in a local ring is proper). Further, $\phi_i^k(F^{e_ik}_* R_{\fram}r) = R_{\fram}$ for all $k\geq 1$ and in particular for $k=e/e_i$. Thus, the required equality holds after localizing at $\fram$.
\end{proof}
The claim means that there are $r_1,\ldots, r_m \in R$ such that:
\[
1 = \phi_1^{e/e_1}(F^{e}_* r_1r) + \cdots + \phi_m^{e/e_m}(F^{e}_* r_mr) = \big(\phi_1^{e/e_1} \cdot r_1+ \cdots + \phi_m^{e/e_m} \cdot r_m \big)(F^e_*r).
\]
We are done after noting that $\phi_1^{e/e_1} \cdot r_1+ \cdots + \phi_m^{e/e_m} \cdot r_m \in \sC_e$.
\end{proof}

\begin{proposition}[{\cite[Proposition 0.10, Lemma 0.14]{SmolkinGeneralAdjointIdeals}}] \label{prop.LocalizationAndRestriction}
Let $(R,\sC)$ be a pair that is non-degenerate along a radical ideal $\fra \in I(R,\phi)$. The following statements hold:
\begin{enumerate}
    \item Let $W \subset R$ be a multiplicatively closed subset such that $W \cap \bigcup_{\p \in \Ass_R(R/\fra)} \p = \emptyset$, then $W^{-1}\uptau_{\fra}(R,\sC) = \uptau_{W^{-1}\fra}(W^{-1}R, W^{-1}\sC)$.
    \item $\uptau_{\fra}(R,\sC)(R/\fra) = \uptau(R/\fra,\sC/\fra)$.
\end{enumerate}
\end{proposition}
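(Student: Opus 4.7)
The plan is to leverage the characterization of the test ideal from \autoref{thm.PureFregularityTestelements}---namely $\uptau_\fra(R,\sC) = \sC c$ for any principal test element $c$---to reduce each statement to exhibiting a principal test element on the localized/restricted side. First I would set up the localized and quotient Cartier algebras: since $\fra \in I(R,\sC)$, every $\phi \in \sC_e$ satisfies $\phi(F^e_* \fra) \subset \fra$ and thus descends to $\bar\phi \colon F^e_*(R/\fra) \to R/\fra$, yielding a graded $R/\fra$-subalgebra $\sC/\fra \subset \sC_{R/\fra}$; similarly each $\phi$ localizes to a map $\phi_W \in \Hom_{W^{-1}R}(F^e_*(W^{-1}R), W^{-1}R)$ via $\phi_W(F^e_* r/u) \coloneqq \phi(F^e_* r u^{q-1})/u$, yielding a graded $W^{-1}R$-subalgebra $W^{-1}\sC \subset \sC_{W^{-1}R}$. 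The hypothesis $W \cap \bigcup_{\p} \p = \emptyset$ forces $\Ass_{W^{-1}R}(W^{-1}R/W^{-1}\fra) = \{W^{-1}\p \mid \p \in \Ass_R(R/\fra)\}$, and non-degeneracy transfers mechanically to both new pairs from the condition $\sC_+ R \not\subset \p$ at each minimal prime $\p$ of $\fra$.

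For (a), I would fix a principal test element $c$ for $(R,\sC)$ along $\fra$, so that $c/1 \in W^{-1}R$ avoids every minimal prime of $W^{-1}\fra$. Given $r/u \in W^{-1}R$ regular modulo $W^{-1}\fra$, both $r$ and $u$ avoid every associated prime of $\fra$, hence so does $r u^{q-1}$. Thus there exist $e>0$ and $\psi \in \sC_e$ with $\psi(F^e_* r u^{q-1}) = c$, and a direct check shows that $u \cdot \psi_W \in W^{-1}\sC_e$ sends $F^e_*(r/u)$ precisely to $c/1$. Hence $c/1$ is a principal test element for $(W^{-1}R, W^{-1}\sC)$ along $W^{-1}\fra$, and \autoref{thm.PureFregularityTestelements} gives
\[
\uptau_{W^{-1}\fra}(W^{-1}R, W^{-1}\sC) = (W^{-1}\sC)(c/1) = W^{-1}(\sC c) = W^{-1}\uptau_\fra(R,\sC).
\]

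For (b), let $c$ again be a principal test element for $(R,\sC)$ along $\fra$. Since $\fra$ is radical, its minimal primes coincide with $\Ass_R(R/\fra)$ and their images in $R/\fra$ are exactly the minimal primes of $(0)$, so $\bar c$ is regular in $R/\fra$. Given any regular $\bar r \in R/\fra$, lift to $r \in R$, which still avoids every minimal prime of $\fra$; choose $\phi \in \sC_e$ with $\phi(F^e_* r) = c$, so that $\bar\phi \in (\sC/\fra)_e$ satisfies $\bar\phi(F^e_* \bar r) = \bar c$. Hence $\bar c$ is a principal test element for $(R/\fra, \sC/\fra)$ along $(0)$, and \autoref{thm.PureFregularityTestelements} yields
\[
\uptau(R/\fra, \sC/\fra) = (\sC/\fra)\, \bar c = \uptau_\fra(R,\sC)(R/\fra),
\]
where the last equality follows by reducing the equality $\uptau_\fra(R,\sC) = \sC c$ modulo $\fra$.

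The main obstacle I expect is the bookkeeping in (a): verifying that $\phi \mapsto \phi_W$ genuinely defines a homomorphism of Cartier algebras (in particular that $(\phi \cdot \phi')_W = \phi_W \cdot \phi'_W$, so that $W^{-1}\sC$ is truly a graded $W^{-1}R$-subalgebra of $\sC_{W^{-1}R}$), together with pinning down the correct scaling factor so that the constructed map lands on $c/1$ exactly rather than on a unit multiple thereof. Once the functoriality of localization and restriction on Cartier algebras is in place, both statements follow mechanically from the principal-test-element description of $\uptau_\fra$.
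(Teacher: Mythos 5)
The paper offers no argument of its own here---the proposition is imported verbatim from Smolkin's paper---so there is nothing internal to compare against; what matters is whether your blind argument stands on its own, and it does. Your route is the expected one: reduce both statements, via \autoref{them.ExistenceTestIdeals} and \autoref{thm.PureFregularityTestelements}, to checking that the image of a principal test element $c$ is again a principal test element for the localized (resp.\ restricted) pair, and then observe that $(W^{-1}\sC)(c/1)=W^{-1}(\sC c)$ and $(\sC/\fra)\bar c=\overline{\sC c}$. The ``bookkeeping'' you worry about does go through: the identification $W^{-1}\Hom_R(F^e_*R,R)\cong\Hom_{W^{-1}R}\big(F^e_*(W^{-1}R),W^{-1}R\big)$ uses that $F^e_*R$ is a finite $R$-module, which is the paper's standing $F$-finiteness convention, and the rescaling is legitimate because $\sC_e$ is stable under post-multiplication by $\sC_0=R$, so $u\cdot\psi\in\sC_e$ and $(u\cdot\psi)_W=u\cdot\psi_W$. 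Two points deserve an explicit sentence in a final write-up, though neither is a gap: first, one must verify that $W^{-1}\fra$ (resp.\ $(0)\subset R/\fra$) lies in $I(W^{-1}R,W^{-1}\sC)$ (resp.\ $I(R/\fra,\sC/\fra)$) and that non-degeneracy descends, so that the test ideals on the right-hand sides are actually defined---both follow at once from $\phi(F^e_*\fra)\subset\fra$ and $\sC_+R\not\subset\p$, as you indicate; second, concluding $\uptau=\sC c'$ from your constructed element $c'$ uses that $\sC c'$ is independent of the choice of principal test element, which one sees by applying the defining property of $c'$ to $r=c$ (giving $c'\in\sC_+c$, hence $c'\in\uptau_{\fra}$, so $c'$ is a test element and \autoref{thm.PureFregularityTestelements} applies).
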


The following two propositions are used in the proof of our Main Theorem \autoref{thm.MainTheorem}.

\begin{proposition} \label{prop.SplitHomomorphismDescend}
Let $\theta \: R \to S$ be a ring homomorphism. Let $\p \subset R$ and $\q \subset S$ be prime ideals such that $\p \subset \theta^{-1}(\q)$. Suppose that there exists $T \in \Hom_R(S,R)$ such that $T(1)=1$ and $T(\q) \subset \p$. If $S$ is purely $F$-regular along $\q$ then $R$ is purely $F$-regular along $\p$.
\end{proposition}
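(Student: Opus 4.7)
The plan is to apply \autoref{thm.PureFregularityTestelements} to the pair $\big(R,\sC_R^{[\p]}\big)$ by showing that $1$ is a principal test element along $\p$. Concretely, given any element $r \in R \setminus \p$, I would produce some $e>0$ and a $\p$-compatible map $\phi \in \sC_{e,R}$ with $\phi(F^e_* r)=1$. The natural idea is to transport the corresponding $\q$-compatible map on $S$ down to $R$ using the splitting $T$.

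Given $r \in R \setminus \p$, the hypothesis $\p \subset \theta^{-1}(\q)$ gives $\theta(r) \notin \q$. Since $S$ is purely $F$-regular along $\q$, \autoref{thm.PureFregularityTestelements} supplies $e>0$ and $\psi \in \sC_{e,S}^{[\q]}$ with $\psi\big(F^e_* \theta(r)\big) = 1$. I would then set
\[
\phi \coloneqq T \circ \psi \circ F^e_*\theta \: F^e_* R \longrightarrow R
\]
and verify the three required properties of this $\phi$. First, it is $R$-linear: $F^e_*\theta$ is $R$-linear (viewing $F^e_* S$ as an $R$-module via $\theta$), $\psi$ is $S$-linear and hence $R$-linear via $\theta$, and $T$ is $R$-linear by hypothesis. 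Second, it is $\p$-compatible: for $x \in \p$, one has $\theta(x) \in \theta(\p) \subset \q$, so $\psi(F^e_*\theta(x)) \in \q$ by the $\q$-compatibility of $\psi$, and thus $\phi(F^e_* x) = T\big(\psi(F^e_*\theta(x))\big) \in T(\q) \subset \p$. Third, by construction $\phi(F^e_* r) = T\big(\psi(F^e_* \theta(r))\big) = T(1) = 1$.

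The non-degeneracy hypothesis needed to invoke \autoref{thm.PureFregularityTestelements} is automatic, since the $\phi$ just built has $1$ in its image, forcing $\sC_+R = R \not\subset \p$. I do not anticipate any serious obstacle; the only bookkeeping point requiring some care is tracking $R$-linearity across the composition of three maps that are a priori linear over different rings, and checking that the $\q$-compatibility of $\psi$ translates into $\p$-compatibility of $\phi$ via $T(\q) \subset \p$.
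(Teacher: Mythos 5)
Your overall strategy coincides with the paper's: transport a $\q$-compatible splitting of Frobenius on $S$ down to $R$ via the composition $T \circ \psi \circ F^e_*\theta$. The construction of $\phi$, the check of $R$-linearity, the verification of $\p$-compatibility from $\theta(\p)\subset\q$ and $T(\q)\subset\p$, and the conclusion via the principal-test-element characterization are all correct and match the paper's argument.

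There is, however, one incorrect justification. You assert that the hypothesis $\p \subset \theta^{-1}(\q)$ gives $\theta(r) \notin \q$ for $r \notin \p$. That inclusion only says $\theta(\p)\subset\q$; its contrapositive runs in the wrong direction, and one would need the reverse inclusion $\theta^{-1}(\q)\subset\p$ (which is not assumed) to conclude what you want. The correct derivation goes through the splitting: since $T$ is $R$-linear with $T(1)=1$, one has $T\big(\theta(r)\big)=r\,T(1)=r$, so $\theta(r)\in\q$ would force $r\in T(\q)\subset\p$, a contradiction. This is precisely how the paper obtains $\theta(r)\notin\q$. The step is essential --- without it you cannot invoke the pure $F$-regularity of $S$ along $\q$ to produce $\psi$ --- so the misattribution is a genuine logical error, though the repair is immediate from the hypotheses you already have in hand.
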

\begin{proof}
Suppose that $S$ is purely $F$-regular along $\q$ and
let $r \notin \p$. Since $T(\q) \subset \p$ by hypothesis, $\theta(r) \notin \q$ as $T\big(\theta(r)\big)=r$. Then, there exists $e>0 $ and $\psi \in \sC_{e,S}^{[\q]}$ such that $\psi\big(F^e_*\theta(r)\big)=1$. Define $\phi \in \Hom_R(F^e_*R,R)$ to be the composition:
\[
\phi\:F^e_* R \xrightarrow{F^e_* \theta} F^e_* S \xrightarrow{ \psi } S \xrightarrow{ T} R.
\]
Since $T(1)=1$, one readily sees that $\phi(F^e_*r)=1$. Furthermore, $\phi \in \sC_{e,R}^{[\p]}$ as
\[
\phi(F^e_* \p) = T\Big(\psi\big(F^e_* \theta(\p)\big)\Big) \subset T\big(\psi(F^e_* \q)\big) \subset T(\q) \subset \p,
\]
where we used the hypothesis $\p \subset \theta^{-1}(\q)$ (\ie $\theta(\p) \subset \q$) for the first inclusion.
\end{proof}

\begin{proposition} \label{add_variable}
Let $R$ be a ring and $\p \subset R$ be a prime ideal. If $R$ is purely $F$-regular along $\p$, then $R[t]$ is purely $F$-regular along $\p[t]$.
\end{proposition}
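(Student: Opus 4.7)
The plan is to check the principal test element criterion of \autoref{thm.PureFregularityTestelements}: for each $f \in R[t] \setminus \p[t]$ I need to exhibit some $e > 0$ and a $\p[t]$-compatible map $\Psi \in \sC_{e,R[t]}^{[\p[t]]}$ with $\Psi(F^e_*f) = 1$. The idea is to isolate a coefficient of $f$ lying outside $\p$ and use the pure $F$-regularity data on $R$ to send that coefficient to $1$.

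For this, I would introduce the following auxiliary construction. Given $\phi \in \sC_{e,R}^{[\p]}$ and an integer $0 \leq j < q \coloneqq p^e$, define $\tilde\phi_j \: F^e_* R[t] \to R[t]$ on additive generators by
\[
\tilde\phi_j\bigl(F^e_*(r t^i)\bigr) = \begin{cases} \phi(F^e_* r)\, t^{(i-j)/q} & \text{if } q \mid i - j,\\ 0 & \text{otherwise}\end{cases}
\]
(the first case automatically forces $i \geq j$ because $0 \leq j < q$). A routine direct check, testing $R[t]$-linearity against scalars $r \in R$ and $s = t$ and invoking that the $R[t]$-action on $F^e_* R[t]$ factors through $q$-th powers, should show $\tilde\phi_j \in \sC_{e, R[t]}^{[\p[t]]}$. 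The crucial feature of $\tilde\phi_j$ is that, when $q > \deg f$ and $j \leq \deg f$, the sum $\tilde\phi_j(F^e_* f)$ collapses to the single term $\phi(F^e_* a_j)$. Thus, taking $j$ equal to the index $i_0$ of a coefficient $a_{i_0} \notin \p$ of $f$, the problem reduces to finding, for arbitrarily large $e$, a map $\phi \in \sC_{e,R}^{[\p]}$ with $\phi(F^e_* a_{i_0}) = 1$.

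The genuine obstacle is this last point: \autoref{thm.PureFregularityTestelements} applied to $R$ only yields an initial map $\phi_0 \in \sC_{e_0,R}^{[\p]}$ sending $F^{e_0}_* a_{i_0}$ to $1$ for a specific (possibly small) $e_0$, whereas I need $p^e > \deg f$. To bypass this, I plan to combine $\phi_0$ with an $F$-purity witness $\psi \in \sC_{e_1, R}^{[\p]}$ satisfying $\psi(F^{e_1}_* 1) = 1$; such a $\psi$ exists because $R$ is $F$-pure along $\p$ by \autoref{prop.MinPrimesareCoprime}, obtained by twisting any surjective $\p$-map $\phi'$ by an element $s$ with $\phi'(F^{e_1}_* s) = 1$. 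A short induction on $k$ then gives $\psi^k \cdot \phi_0 \in \sC_{k e_1 + e_0, R}^{[\p]}$ sending $F^{k e_1 + e_0}_* a_{i_0}$ to $1$, and choosing $k \gg 0$ so that $p^{k e_1 + e_0} > \deg f$ and plugging this $\phi$ into $\tilde\phi_{i_0}$ yields the desired $\Psi$.
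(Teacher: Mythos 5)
Your proposal is correct and follows essentially the same route as the paper: your map $\tilde\phi_j$ is exactly the composite $\psi[t]\circ\pi_j$ of the projection onto the $F^e_*t^j$-summand of $F^e_*\big(R[t]\big)=\bigoplus_{k=0}^{q-1}(F^e_*R)[t]\,F^e_*t^k$ with the base-changed map, and the collapse to the single coefficient $a_{i_0}$ for $q>\deg f$ is the same key step. The only (harmless) divergence is how you arrange for arbitrarily large $e$ --- you compose with an $F$-purity witness, whereas the paper simply takes powers of the original surjective map.
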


\begin{proof}
Observe that $\p[t] \subset R[t]$ is a prime ideal. Indeed, $R[t]/\p[t]=(R/\p)[t]$ is an integral domain. Let $f = r_0 + r_1t + \cdots + r_nt^n \notin \p[t]$, say $r_i \notin \p$ for some $i \in \{0,\ldots,n\}$. Therefore, there exist  $e>0$ and $\psi \in \sC_{R,e}^{[\p]}$ such that $\psi(F_*^e r_i) =1$. Moreover, we may take $e$ to be arbitrarily large. Indeed, if $\psi(F^e_* r R) = R$ the same holds for all powers of $\psi$. We will take $e$ so that $q > n$. Observe that $\psi$ induces an $R[t]$-linear map $\psi[t] \coloneqq \psi \otimes_R R[t] \: (F_*^eR)[t] \to R[t]$. 

Note that $F^e_* \big( R[t] \big) = \bigoplus_{k=0}^{q-1} (F^e_*R)[t] F^e_* t^k$ as an $(F^e_*R)[t]$-module and so as an $R[t]$-module. Let $\pi_k \: F^e_* \big( R[t] \big) \to (F^e_*R)[t]$ be the corresponding projection. Since $e$ was chosen such that $q>n$, we have $\pi_k(F^e_* f)=F^e_*r_k$ for all $k$. Therefore, the $R[t]$-linear map $\phi \coloneqq \psi[t] \circ \pi_i$ is such that $\phi(F^e_* f) = 1$. It remains to prove that $\p[t] \in I\big(R[t], \phi\big)$. For this, note that $F_*^e \big(\p[t]\big) = \bigoplus_{k=0}^{q-1} (F^e_*\p)[t] F^e_* t^k$ and so $\pi_i\big(F_*^e \big(\p[t]\big)\big) \subset (F^e_*\p)[t]$. The result then follows as $\psi[t]\big((F_*^e \p)[t]\big) \subset \p[t]$.
\end{proof}

\section{Pure $F$-regularity of Pure Pairs} Let us consider the following setup.

\begin{setup} \label{setup.purepairs}
   Let $R$ be a normal integral domain and $\p \subset R$ be a height-$1$ prime ideal with corresponding prime divisor $P \coloneqq V(\p) \subset X \coloneqq \Spec R$. Note that $\p = R(-P)$. In what follows, we will be referring to $F$-singularities of the pure pair $(R,P)=(X,P)$, which refer to the $F$-singularities of the Cartier algebra $\sC_R^P$ of $\p$-maps; see \autoref{rem.CartierAlgebrasMaps}.
\end{setup}

\begin{proposition} \label{test_ideal_localization}
  Work in \autoref{setup.purepairs}. Then, $\p$ is a center of $F$-purity of $(R,P)$. Moreover, $\uptau_{\p}(R,P) \subset \uptau(R)$ and
\[
\uptau_{\p}(R,P)_{\q} = \uptau_{\p}(R,P) R_{\q} = \begin{cases}
\uptau_{\p R_{\q}}(R_{\q},P_{\q}) & \text{ if } \q \in P\\
\uptau(R_{\q}) = \uptau(R)R_{\q} &\text{ if } \q \not\in P.
\end{cases}
\]
\end{proposition}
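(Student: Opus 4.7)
My plan is to verify the three assertions in order, leveraging throughout the identification $W^{-1}\sC_{e,R}^P \cong \sC_{e,R_W}^{P_W}$ for any multiplicatively closed $W \subset R$. This follows from the commutation of $\Hom$ with localization applied to the finitely presented module $F^e_* R\bigl((q-1)P\bigr)$ together with $R((q-1)P) \otimes_R R_W = R_W((q-1)P_W)$; here $P_W$ is the zero divisor on $\Spec R_W$ whenever $\p$ meets $W$.

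For the first assertion, $\p \in I(R, \sC_R^P)$ by the very definition of $\sC_R^P$ as $\p$-maps. To produce $\phi \in \sC_{e,R}^P$ with $\phi(F^e_*R) \not\subset \p$, I would first work in the DVR $R_\p$: picking a uniformizer $t$, the $R_\p$-linear map $\phi_\p \: F^e_* R_\p \to R_\p$ defined on the free basis $\{F^e_* t^i\}_{0 \le i < q}$ by $F^e_* 1 \mapsto 1$ and $F^e_* t^i \mapsto 0$ for $1 \le i < q$ is $\p R_\p$-compatible (a short direct check) and surjective, so $(R_\p, P_\p)$ is $F$-pure. Writing $\phi_\p = \phi/s$ with $\phi \in \sC_{e,R}^P$ and $s \in R \setminus \p$ via the localization isomorphism above, the localization of $\phi$ at $\p$ equals $s\phi_\p$, which is still surjective; hence $\phi(F^e_* R) \not\subset \p$.

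For the inclusion $\uptau_\p(R,P) \subset \uptau(R)$, the containment $\sC_R^P \subset \sC_R$ gives $I(R, \sC_R) \subset I(R, \sC_R^P)$, so $\uptau(R)$ is itself a $\sC_R^P$-ideal. Applying \autoref{prop.LocalizationAndRestriction}(a) with $\sC = \sC_R$ and $\fra = (0)$ yields $\uptau(R) R_\p = \uptau(R_\p) = R_\p$, the last equality because the DVR $R_\p$ is $F$-regular. Thus $\uptau(R) \not\subset \p$, and the minimality clause of \autoref{them.ExistenceTestIdeals}(c) gives $\uptau_\p(R,P) \subset \uptau(R)$.

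For the localization formula I would split into cases. If $\q \supset \p$, then $W = R \setminus \q$ is disjoint from $\Ass_R(R/\p) = \{\p\}$, and \autoref{prop.LocalizationAndRestriction}(a) applied to $\sC = \sC_R^P$ and $\fra = \p$ yields $\uptau_\p(R,P) R_\q = \uptau_{\p R_\q}(R_\q, W^{-1}\sC_R^P) = \uptau_{\p R_\q}(R_\q, P_\q)$ via the localization isomorphism. If $\q \not\supset \p$, then $P_\q = 0$ and $W^{-1}\sC_R^P = \sC_{R_\q}$; the inclusion from the previous step combined with \autoref{prop.LocalizationAndRestriction}(a) (for $\sC_R$, $\fra = (0)$) gives $\uptau_\p(R,P) R_\q \subset \uptau(R) R_\q = \uptau(R_\q)$, while the reverse inclusion follows because $\uptau_\p(R,P) R_\q$ is a nonzero $\sC_{R_\q}$-ideal and $\uptau(R_\q)$ is the smallest such. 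I expect the main technical obstacle to be a clean verification of the localization isomorphism $W^{-1}\sC_R^P \cong \sC_{R_W}^{P_W}$, which is the workhorse both for propagating the DVR-level $F$-splitting globally in step one and for both cases of the localization formula.
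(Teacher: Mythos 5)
Your treatment of the containment $\uptau_{\p}(R,P) \subset \uptau(R)$ and of the localization formula is correct and essentially identical to the paper's: both reduce to \autoref{prop.LocalizationAndRestriction}, the identification $W^{-1}\sC^P_{e,R} \cong \sC^{P_W}_{e,R_W}$ (which degenerates to the full Cartier algebra when $W$ meets $\p$), the $F$-regularity of the DVR $R_\p$, and the minimality clauses of \autoref{them.ExistenceTestIdeals}. Where you genuinely diverge is the first claim, that $\p$ is a center of $F$-purity: the paper argues globally by duality, observing that maps in $\sC^P_{e,R}$ with image inside $\p$ form $\Hom_R(F^e_*R(qP),R)$, which is strictly contained in $\sC^P_{e,R}=\Hom_R(F^e_*R((q-1)P),R)$ because $R((q-1)P)\subsetneq R(qP)$ and these modules are reflexive. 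You instead construct an explicit surjective $\p R_\p$-compatible splitting on the DVR $R_\p$ and spread it out; the spreading-out step via the localization isomorphism is fine.

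The local construction itself, however, has a concrete flaw: $\{F^e_*t^i\}_{0\le i<q}$ is \emph{not} a free basis of $F^e_*R_\p$ over $R_\p$ unless the residue field $\kappa(\p)$ is perfect, since $\operatorname{rank}_{R_\p}F^e_*R_\p = [\Frac(R)\colon\Frac(R)^q] = q\cdot[\kappa(\p)\colon\kappa(\p)^q]$. For a height-one prime in a ring of dimension at least two (e.g.\ $R=\bF_p[x,y]$, $\p=(x)$, where $\kappa(\p)=\bF_p(y)$ and $F_*R_\p$ has rank $p^2$), your set spans only a proper free submodule, so the map $\phi_\p$ is not defined by the recipe you give, and any extension of it to all of $F^e_*R_\p$ requires a separate verification of $\p R_\p$-compatibility. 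The statement you are after at the level of $R_\p$ is nevertheless true and is already available in the paper: $R_\p$ is regular and $R_\p/\p R_\p$ is a field, hence $F$-regular, so $R_\p$ is purely $F$-regular along $\p R_\p$ by \autoref{rem.RegularAmbient}, and in particular $(R_\p,P_\p)$ is $F$-pure. Substituting that citation for your explicit basis computation repairs the argument; alternatively, the paper's reflexive-duality argument avoids the local analysis altogether.
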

\begin{proof}
Observe that $\p \in I(R,P)$. Indeed, 
\[
\sC_{e,R}^P = \Hom_R\big(F^e_*R\big((q-1)P\big),R\big) = \Hom_R\big(F^e_*R(-P),R(-P)\big)
\] 
inside $\sC_{e,R}$. Moreover, the set of maps $\phi \in \sC_{e,R}$ whose image is contained in $\p=R(-P)$ is given by $\Hom_R\big(F^e_*R,R(-P)\big) = \Hom_R\big(F^e_*R(qP),R\big)$, which is inside $\sC_{e,R}^P$. In other words, $\p$ not being a center of $F$-purity means that the inclusion $R\big((q-1)P\big)\subset R(qP)$ becomes an equality after taking the pushforward $F^e_*$ and dualizing. However, that would imply that $R\big((q-1)P\big)\subset R(qP)$ is an equality, which is not the case. In conclusion, $\p \in I(R,P)$ is a center of $F$-purity.

The inclusion $\uptau_{\p}(R,P) \subset \uptau(R)$ then means that $\uptau(R)$ is not contained in $\p$. One readily sees that $\uptau(R) \not\subset \p$ as otherwise $\uptau(R_{\p}) \subset \p R_{\p}$ and so $R_{\p}$ is not $F$-regular, but this contradicts that $R_{\p}$ is regular as $R$ is normal and $\height \p =1$.

Finally, let $\q \subset R$ be a  prime ideal. If $\q \in P$, then $\uptau_{\p}(R,P)_{\q}$ equals $\uptau_{\p R_{\q}}(R_{\q},P_{\q})$ by \autoref{prop.LocalizationAndRestriction}. If $\q \not\in P$, then $R((q-1)P)_{\q} = R_{\q}$ and so
\begin{align*}
\big(\sC_{e,R}^P\big)_{\q} = R_{\q} \otimes \Hom_R(F^e_*R((q-1)P),R)
&= \Hom_{R_{\q}}(F^e_*R((q-1)P)_{\q},R_{\q}) \\&= \Hom_{R_{\q}}(F^e_*R_{\q},R_{\q}) = \sC_{e,R_{\q}}.
\end{align*}
Hence, $0\neq \uptau_{\p}(R,P)_{\q} \in I(R_{\q},\sC_{R_{\q}})$ and so $\uptau(R) \subset \uptau_{\p}(R,P)_{\q}$.
\end{proof}

\begin{definition} \label{def.PureFregularityPfPairs}
Work in \autoref{setup.purepairs}. We say that the pure pair $(X,P)$ is \emph{purely $F$-regular} if $R$ is purely $F$-regular along $\p$. We will often use $(R,P)$ and $(X,P)$ interchangeably. 
\end{definition}
We obtain the following by applying \autoref{test_ideal_localization} directly.
\begin{proposition} \label{pro.LocalizatioPureCase}
Work in \autoref{setup.purepairs}. Then, $(R,P)$ is purely $F$-regular if and only if the following two conditions hold:
\begin{enumerate}
    \item $(R_{\q},P_{\q})$ is purely $F$-regular for all $\q \in P$.
    \item $R_{\q}$ is $F$-regular for all $\q \not\in P$.
\end{enumerate}
Furthermore, if $(R,P)$ is purely $F$-regular then both $R$ and $R/\p$ are $F$-regular.
\end{proposition}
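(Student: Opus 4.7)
The proof should be entirely a bookkeeping exercise: the statement $\uptau_{\p}(R,P) = R$ is local, so one simply translates it at each prime $\q$ using \autoref{test_ideal_localization}. Concretely, my plan is to argue that $\uptau_{\p}(R,P) = R$ if and only if $\uptau_{\p}(R,P)_{\q} = R_{\q}$ for every (maximal) prime $\q \subset R$; then the case-split $\q \in P$ versus $\q \notin P$ in \autoref{test_ideal_localization} identifies these localizations with $\uptau_{\p R_{\q}}(R_{\q}, P_{\q})$ and $\uptau(R_{\q})$ respectively. The first vanishing condition is the pure $F$-regularity of $(R_{\q}, P_{\q})$ (by \autoref{def.PureFregularityPfPairs}), and the second is the $F$-regularity of $R_{\q}$; these are precisely conditions (a) and (b).

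For the ``furthermore'' part about $R$, I would use that \autoref{test_ideal_localization} already establishes $\uptau_{\p}(R,P) \subset \uptau(R)$. Hence if $(R,P)$ is purely $F$-regular, then $R = \uptau_{\p}(R,P) \subset \uptau(R)$, which gives $\uptau(R) = R$, i.e. $R$ is $F$-regular.

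For $R/\p$, I would invoke \autoref{prop.LocalizationAndRestriction}(b) applied to $(R, \sC_R^P)$ and $\fra = \p$, which gives
\[
\uptau_{\p}(R,P)(R/\p) = \uptau\bigl(R/\p,\, \sC_R^P/\p\bigr).
\]
Under the hypothesis $\uptau_{\p}(R,P) = R$, the left-hand side equals $R/\p$. Since the restriction $\sC_R^P/\p$ is a Cartier subalgebra of $\sC_{R/\p}$, the containment of test ideals $\uptau(R/\p,\sC_R^P/\p) \subset \uptau(R/\p)$ (as smaller Cartier algebras yield smaller test ideals) forces $\uptau(R/\p) = R/\p$, i.e. $R/\p$ is $F$-regular.

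I do not anticipate any substantive obstacle: the whole argument is a direct unpacking of \autoref{test_ideal_localization} together with \autoref{prop.LocalizationAndRestriction}. The only mildly non-trivial point is the direction of the containment of test ideals under inclusion of Cartier algebras, which I would justify by noting that any $\sC_{R/\p}$-ideal is automatically a $(\sC_R^P/\p)$-ideal and applying the minimality characterization of the test ideal from \autoref{them.ExistenceTestIdeals}.
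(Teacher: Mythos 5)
Your proposal is correct and matches the paper's approach: the paper offers no separate proof, stating only that the proposition follows by applying \autoref{test_ideal_localization} directly, which is precisely the unpacking you carry out. Your treatment of the ``furthermore'' part (via $\uptau_{\p}(R,P)\subset\uptau(R)$ and via \autoref{prop.LocalizationAndRestriction}(b) combined with the monotonicity of test ideals under inclusion of Cartier algebras) correctly fills in the details the paper leaves implicit.
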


\subsubsection{The log Gorenstein case} With notation as in \autoref{setup.purepairs}, the log pair $(R,P)$ is said to be \emph{log Gorenstein} if $K_R+P$ is a Cartier divisor. 

\begin{remark} \label{rem.Principalization}
    Work in \autoref{setup.purepairs}. Consider the $R$-linear map
\[
T^e \: \Hom_R\big(F_*^eR((1-q)P),R \big) \xrightarrow{\subset} \Hom_R(F^e_* R,R) \xrightarrow{\Tr_{F^e}} R
\]
where $\Tr_{F^e}$ is a Frobenius trace on $R$, \ie $\Tr_{F^e} \: \phi \mapsto \phi(F^e_*1)$. Then, $(R,P)$ is purely $F$-regular if and only if for all $r \notin \p$ there is $e>0$ such that the $R$-linear map
\[
T^e \cdot r \: \Hom_R\big(F_*^eR((1-q)P),R \big) \to  R, \quad T \cdot r \: \phi \mapsto T^e(\phi \cdot r) = \phi(F^e_*r)
\]
 is surjective.

Let $\kappa^e_R \: F^e_* R(K_R) \to R(K_R)$ be the \emph{$e$-th Cartier operator}, which is the Frobenius trace on $R(K_R)$. We may use Cartier operators to express $T^e$ as the following $R$-linear map
\[
F_*^e R\big((1-q)(K_R+P)\big) \xrightarrow{\subset} F_*^e R\big((1-q)K_R\big) \xrightarrow{\big(\kappa^e_R \otimes R(-K_R)\big)^{**}} R,
\]
where $(-)^{**}$ denotes reflexification. Twisting it by $R(K_R + P)$ and reflexifying yields
\[
\Psi^e\: F_*^e R(K_R+P) \xrightarrow{\subset} F^e_* R(K_R+qP) \xrightarrow{\big(\kappa^e_R \otimes R(P)\big)^{**}} R(K_R+P),
\]
which let us define a \emph{principal} Cartier structure on $R(K_R+P)$, for $\Psi^e = (\Psi^1)^e$. We set $\Psi \coloneqq \Psi^1$.

Now, the map $T^e \cdot r$ is surjective if and only if it admits a section, for $R$ is a projective module. Moreover, $T^e \cdot r$ admits a section if and only if so does $\Psi^e \cdot r$, as they are obtained from one another by reflexified twists. However, $R(K_R+P)$ is not necessarily a projective module and so the surjectivity of $\Psi^e \cdot r$ does not imply that it admits a section. Of course, $R(K_R+P)$ is projective precisely when it is locally free which means that $(R,P)$ is log Gorenstein. We then conclude the following.
\end{remark}

\begin{proposition} \label{pro.LofGorensteinCase}
    Work in \autoref{setup.LogGoresnteinsSetup} and assume that $(R,P)$ is log Gorenstein. Then, with $\Psi$ as in \autoref{rem.Principalization}, $(R,P)$ is purely $F$-regular if and only if for all $r \notin \p$ there is $e>0$ such that $\Psi^e \cdot r$ is surjective.
\end{proposition}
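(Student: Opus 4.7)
The plan is to chase through the chain of equivalences sketched in \autoref{rem.Principalization}, using the log Gorenstein hypothesis at precisely the one spot where it is needed. By \autoref{thm.PureFregularityTestelements} applied to the Cartier algebra $\sC_R^P$, the pair $(R,P)$ is purely $F$-regular if and only if $1$ is a principal test element, i.e.\ for every $r\notin \p$ there exist $e>0$ and $\phi \in \sC^P_{e,R}=\Hom_R\bigl(F^e_*R((q-1)P),R\bigr)$ with $\phi(F^e_*r)=1$. After twisting by $R\bigl((1-q)P\bigr)$, this is exactly the statement that the evaluation-at-$r$ map $T^e\cdot r$ of \autoref{rem.Principalization} is surjective for some $e>0$. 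The goal is therefore to deduce that, under the log Gorenstein assumption, surjectivity of $T^e\cdot r$ is equivalent to surjectivity of $\Psi^e\cdot r$ for every such $r$ and $e$.

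First I would observe that the codomain $R$ of $T^e\cdot r$ is free of rank one, hence projective, so $T^e\cdot r$ is surjective if and only if it admits an $R$-linear section. Next, as highlighted in \autoref{rem.Principalization}, the map $\Psi^e$ is obtained from $T^e$ by twisting with $R(K_R+P)$ and reflexifying; since reflexive twist by a rank-one reflexive module is a functorial operation that preserves split surjections (one checks this by localizing at height-one primes, where $R(K_R+P)$ is locally free), $T^e\cdot r$ admits a section if and only if $\Psi^e\cdot r$ admits a section. This much holds without any log Gorenstein assumption.

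Finally, the log Gorenstein hypothesis enters to close the loop: by definition $K_R+P$ is Cartier, so $R(K_R+P)$ is an invertible $R$-module, hence projective. Thus any surjection onto $R(K_R+P)$ splits, which means $\Psi^e\cdot r$ is surjective if and only if it admits a section. Concatenating these three equivalences gives
\[
T^e\cdot r \text{ surjective} \ \Leftrightarrow\  T^e\cdot r \text{ splits} \ \Leftrightarrow\  \Psi^e\cdot r \text{ splits} \ \Leftrightarrow\  \Psi^e\cdot r \text{ surjective},
\]
and feeding this back into the test-element criterion recalled above proves the proposition.

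The only mildly delicate point---and the one step that deserves the most care when writing the proof up in full---is verifying that reflexified twist by $R(K_R+P)$ induces a genuine bijection between sections of $T^e\cdot r$ and sections of $\Psi^e\cdot r$. This is routine once one works locally at primes of height at most one and uses that both $R$ and $R(K_R+P)$ are reflexive of rank $1$, but it is where the single nontrivial verification lives; everything else is simply collating what was already observed in \autoref{rem.Principalization}.
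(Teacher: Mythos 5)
Your proof is correct and takes essentially the same route as the paper: the paper states \autoref{pro.LofGorensteinCase} as an immediate consequence of \autoref{rem.Principalization}, which contains exactly the chain of equivalences you spell out (the test-element criterion from \autoref{thm.PureFregularityTestelements}, surjectivity of $T^e\cdot r$ being equivalent to splitting because $R$ is projective, transfer of splittings under the reflexified twist, and the log Gorenstein hypothesis making $R(K_R+P)$ invertible so that surjectivity of $\Psi^e\cdot r$ is again equivalent to splitting). Your write-up merely makes explicit the verification, left implicit in the remark, that the reflexified twist gives a bijection on sections.
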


In the local/graded case, we may use Matlis and local duality to translate \autoref{pro.LofGorensteinCase} as the injectivity of certain Frobenius actions on local cohomology modules. 

\begin{setup} \label{setup.LogGoresnteinsSetup}
Let $(R,\fram,\kay)$ be a normal integral $\kay$-algebra of dimension $d$ that is either: local with maximal ideal $\fram$ and residue field $\kay$ or a standard $\bN$-graded algebra with $R_0=\kay$ and irrelevant ideal $R_+=\fram$. Further, we consider a height-$1$ prime ideal $\p \subset R$ with corresponding prime divisor $P \coloneqq V(\p) \subset \Spec R$, which is further assumed homogeneous in the graded case. In either case, we assume that $-P$ is a canonical divisor (\ie $(R,P)$ is log Gorenstein). Thus, we set $K_R=-P$.
\end{setup}

\begin{remark} \label{rem.PrincipalizationDual}
Working in \autoref{setup.LogGoresnteinsSetup}, the maps $\Psi^e$ in \autoref{rem.Principalization} become $R$-linear maps $\Psi^e\: F^e_* R \to R$. We may interpret this as an equality $\sC_{e,R}^P = \Psi^e \cdot R$. That is, the Cartier algebra $\sC_R^P$ of $P$-maps is principally generated by a map $\Psi \: F_* R \to R$, which is unique up to pre-multiplication by units of $R$. 

Now, using Matlis duality at $\fram$, we may study the surjectivity of maps $\Psi^e\cdot r \: F^e_* R \to R$ by the injectivity of their Matlis duals. Observe that, by local duality, the Matlis dual of $\Psi \: F_* R \to R$ is an $R$-linear map:
\[
G\: H^d_{\fram}\big(R(-P)\big) \to F_*H^d_{\fram}\big(R(-P)\big),
\]
which we treat as a $p$-linear map $G \: H^d_{\fram}\big(R(-P)\big) \to H^d_{\fram}\big(R(-P)\big)$.
\end{remark}

\begin{proposition} \label{prop.PFR}
Work in \autoref{setup.LogGoresnteinsSetup}. Then, with $G$ as in \autoref{rem.PrincipalizationDual}, $(R,P)$ is purely $F$-regular if and only if for all $r \notin \p$ there is $e >0$ such that $r \cdot G^e$ is injective.
\end{proposition}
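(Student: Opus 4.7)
The plan is to apply \autoref{pro.LofGorensteinCase} to reduce the pure $F$-regularity of $(R,P)$ to the surjectivity of the maps $\Psi^e \cdot r \colon F^e_*R \to R$ for $r \notin \p$ and some $e > 0$, and then to translate surjectivity into injectivity via Matlis duality, identifying the Matlis dual with $r \cdot G^e$ on $H^d_\fram\big(R(-P)\big)$.

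First I would invoke \autoref{pro.LofGorensteinCase}: since $(R,P)$ is log Gorenstein, pure $F$-regularity is equivalent to the existence, for every $r \notin \p$, of some $e > 0$ with $\Psi^e \cdot r$ surjective. Because $R$ is $F$-finite, $F^e_*R$ and $R$ are finitely generated $R$-modules. After passing to the $\fram$-adic completion $\hat R$ in the local case (which preserves pure $F$-regularity for $F$-finite rings), or using graded Matlis duality in the graded case, Matlis duality acts as an exact contravariant functor on finitely generated modules, interchanging surjections and injections. Hence the surjectivity of $\Psi^e \cdot r$ is equivalent to the injectivity of its Matlis dual.

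It remains to identify this Matlis dual with $r \cdot G^e$. By the log Gorenstein hypothesis, $\omega_R \cong R(-P)$, so local duality gives the identification $R^\vee \cong H^d_\fram\big(R(-P)\big)$. The Matlis dual of $\Psi$ is $G$ by the very construction in \autoref{rem.PrincipalizationDual}. Iterating, and using the compatibility $\kappa^e_R = (\kappa^1_R)^e$ of the Cartier operator under the standard Frobenius pushforward identifications, yields that $(\Psi^e)^\vee = G^e$. Finally, precomposition with multiplication by $r$ on the source of $\Psi^e \cdot r$ dualizes to multiplication by $r$ on the target of the dual map, so $(\Psi^e \cdot r)^\vee$ is identified with $r \cdot G^e$ on $H^d_\fram\big(R(-P)\big)$, which is exactly the claim.

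The main obstacle I anticipate is the bookkeeping needed to verify that Matlis duality commutes with Frobenius iteration, i.e., that $(\Psi^e)^\vee = G^e$. Although conceptually routine and well-known to experts, carrying it out rigorously requires carefully tracking how the reflexified twists of the Cartier operator interact with iterated Frobenius pushforwards under local duality, and checking that the resulting identifications are compatible with the twist by $R(K_R + P)$ used to define $\Psi$ in \autoref{rem.Principalization}.
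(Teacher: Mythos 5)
Your proposal is correct and follows essentially the same route as the paper: the paper states this proposition as an immediate consequence of \autoref{pro.LofGorensteinCase} combined with the Matlis/local duality dictionary set up in \autoref{rem.PrincipalizationDual}, which is exactly the chain of equivalences you describe. The duality bookkeeping you flag (that $(\Psi^e)^\vee$ is identified with $G^e$ and that precomposition by $r$ dualizes to multiplication by $r$) is likewise taken as standard in the paper.
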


With the above in place, we are ready to provide our pure log Gorenstein version of \cite{FedderWatanabe}. We follow the presentation in \cite[Theorem 8.1]{MaPolstraLecturesOnFsingularities}. In fact, our proof is \emph{mutatis mutandis} the one in \emph{loc. cit.} and we work it out here for the sake of completeness. 

\begin{proposition} \label{lem.TheLemmaGorensteinCase}
Work in the graded case of \autoref{setup.LogGoresnteinsSetup}. Then, $(R,P)$ is purely $F$-regular if and only if the following conditions hold:
\begin{enumerate}
    \item[(a)] $(R_{\q}, P_{\q})$ is purely $F$-regular for all homogeneous prime ideals $\p \subset \q \subsetneq \fram$,
    \item[(a')] $R_{\q}$ is $F$-regular for all homogeneous prime ideals $\p \not\subset \q \subsetneq \fram$,
    \item[(b)] $\big[H_{\fram}^d\big(R(-P)\big)\big]_n=0$ for all $n\geq 0$,
    \item[(c)] $(R,P)$ is $F$-pure. 
\end{enumerate}
\end{proposition}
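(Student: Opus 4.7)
The plan is to follow the classical Fedder--Watanabe criterion \cite[Theorem 8.1]{MaPolstraLecturesOnFsingularities} essentially verbatim, with the principal Cartier structure from \autoref{pro.LofGorensteinCase} replacing the full Cartier algebra and \autoref{prop.PFR} serving as the dual characterization. I will establish both implications separately.

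\emph{Forward direction.} Assuming $(R,P)$ is purely $F$-regular, I will deduce (a) and (a') by localizing at each homogeneous prime $\q \subsetneq \fram$ via \autoref{pro.LocalizatioPureCase}, and (c) from \autoref{prop.MinPrimesareCoprime}. For (b), the plan is to apply \autoref{prop.PFR}: for every $r \notin \p$ there is $e$ with $rG^e$ injective on $E := H_{\fram}^d\bigl(R(-P)\bigr)$. Suppose toward contradiction that $[E]_n \neq 0$ for some $n \geq 0$, let $n_0$ be maximal with this property, and fix a nonzero homogeneous $\xi \in [E]_{n_0}$. If $n_0 > 0$, then $G^e(\xi)$ sits in degree $\sim p^e n_0$, which exceeds the top degree of $E$ for $e$ large, so $G^e(\xi) = 0$, contradicting the injectivity of $G$ granted by (c). If $n_0 = 0$, pick a homogeneous $r \in \fram \setminus \p$ of positive degree; then $rG^e(\xi)$ has positive degree and so vanishes by maximality of $n_0$, while $G^e(\xi) \neq 0$, contradicting \autoref{prop.PFR}.

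\emph{Reverse direction.} Given (a)--(c), I fix a homogeneous $r \notin \p$ and aim to find $e$ with $rG^e$ injective on $E$. The $p$-linearity of $G$ combined with the injectivity of $G$ from (c) shows that the kernels $K_e := \ker(rG^e)$ form a descending chain of graded submodules of $E$, which stabilizes by graded Artinianness to a $G$-stable submodule $N := \bigcap_e K_e$. The goal will be $N = 0$. Using (a) and (a') locally at each homogeneous prime $\q \subsetneq \fram$, I will produce some $\psi_\q \in \sC^P_{e_\q,R_\q}$ with $\psi_\q(F^{e_\q}_* r) = 1$ in $R_\q$; globalizing, the homogeneous ideal $J := \sum_{e > 0} \Psi^e(F^e_* r R) \subset R$ avoids every non-maximal homogeneous prime. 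So either $J = R$---in which case the combining argument from the proof of \autoref{thm.PureFregularityTestelements} yields a single $\phi$ with $\phi(F^e_* r) = 1$, finishing the proof---or $\sqrt{J} = \fram$.

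The hard part will be ruling out the residual case $\sqrt{J} = \fram$ with $J \subsetneq R$. I plan to translate via Matlis duality: $N = (R/J)^\vee$ is then a nonzero graded submodule of $E$ lying, by (b), in strictly negative degrees. The combination of $G$-stability, injectivity of $G$ from (c), and the degree bound from (b) should then be iterated to push an element of $N$ into non-negative degrees, yielding the required contradiction. This boosting step---where (b) and (c) interact essentially---is the log Gorenstein adaptation of Fedder's classical argument as carried out in Ma--Polstra, and the details should transport unchanged with $\sC^P$ replacing $\sC$ and $E$ replacing $H^d_\fram(R)$.
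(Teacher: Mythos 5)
Your overall strategy is the paper's own: this is the Fedder--Watanabe adaptation, with the same descending chain of kernels $K_e=\ker(rG^e)$, stabilization by Artinianness, $G$-stability of the stable submodule, finite length via (a)/(a') localized at non-maximal homogeneous primes, and a degree argument to finish. Two of your choices are pleasant variants: your forward-direction proof of (b) via \autoref{prop.PFR} (top-degree escape for $n_0>0$, multiplication by a positive-degree $r\notin\p$ for $n_0=0$) replaces the paper's route through the exact sequence $0\to H^{d-1}_{\fram}(R/\p)\to E\to H^d_{\fram}(R)\to 0$ and $F$-rationality; and working with the ideal $J=\sum_e\Psi^e(F^e_*rR)$ and its Matlis dual sidesteps the paper's small detour needed to see that the stable kernel, a priori only Artinian, is actually of finite length.

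There is, however, one step that fails as written: the closing claim that $G$-stability, injectivity of $G$, and (b) let you ``push an element of $N$ into non-negative degrees.'' Since $G$ sends a homogeneous element of degree $n$ to one of degree $pn$, and (b) has already placed $N$ in strictly negative degrees, iterating $G$ drives degrees to $-\infty$, never into $[0,\infty)$. The correct contradiction goes the other way: because $\sqrt{J}=\fram$, the module $N=(R/J)^\vee$ has finite length, so its degrees are bounded below; a nonzero homogeneous $\eta\in N$ of degree $n<0$ would give nonzero elements $G^k\eta\in N$ of degree $p^kn\to-\infty$, eventually escaping that bound, contradicting the injectivity of $G$ supplied by (c). (The paper organizes this slightly differently: it first shows the stable submodule must concentrate in degree $0$, since any nonzero degree would be driven to $\pm\infty$ inside a finite-length module, and only then invokes (b) to kill the degree-$0$ part.) With that one correction your argument closes; all the needed ingredients are already assembled in your sketch. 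You should also record the standard reduction to homogeneous $r$ (a homogeneous test element exists because $\uptau_{\p}(R,P)$ and $\p$ are homogeneous), which both you and the paper use implicitly.
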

\begin{proof}
If $(R,\p)$ is purely $F$-regular then (a), (a') and (c) hold by \autoref{pro.LocalizatioPureCase} and \autoref{prop.MinPrimesareCoprime}. For (b), note that both $R$ and $R/\p$ are $F$-regular by \autoref{pro.LocalizatioPureCase}. In particular, $R$ is Cohen--Macaulay and so there is a canonical exact sequence
\[
0 \to H_{\fram}^{d-1}(R/\p) \to E\coloneqq H_{\fram}^d\big(R(-P)\big) \to H_{\fram}^d(R) \to 0
\]
of graded $R$-modules. In particular, (b) is equivalent to the vanishings $[H^{d-1}_{\fram}(R/\p)]_n=0$ and $[H^d_{\fram}(R)]_n=0$ for all $n \geq 0$. However, these vanishings follow from $R$ and $R/\p$ are being both $F$-regular and so $F$-rational.\footnote{One can prove (b) directly by using the same argument that shows the $F$-rationality of $F$-regular rings.} 

Conversely, suppose that (a), (a'), (b), and (c) hold. To apply \autoref{prop.PFR}, let us define the following graded submodules of $E \coloneqq H^d_{\fram}\big(R(-P)\big)$:
\[
M_e(c)  \coloneqq \ker (c \cdot G^e) = \{\eta \in E \mid c G^e \eta =0\}
\]
for all $c \notin \p$. Observe that condition (c) means that $M_e(1)=0$ for some $e>0$, and so $M_1(1)=0$ as $M_1(1) \subset M_e(1)$. In other words, condition (c) means that $G$ is injective. This further implies that we have the following descending chain of $R$-submodules of $E$
\[
E \supset M_1(c) \supset M_2(c) \supset M_3(c) \supset \cdots
\] 
Indeed, if $cG^{e+1}\eta =0$ then $c^pG^{e+1}\eta =0$. However, $c^pG^{e+1}\eta = c^pGG^{e}\eta=G(cG^e\eta)$. Thus, $cG^e\eta=0$ as $G$ is injective. Summing up, $M_{e+1}(c) \subset M_e(c)$.

Since $E$ is artinian, the above descending chain stabilizes. Let $M(c)=\bigcap_{e>0}M_e(c)$ be the stable submodule, \ie $M(c) = M_e(c)$ for all $e \gg 0$. It suffices to prove that $M(c)=0$. To this end, we observe the  following.

\begin{claim} \label{claim.FiniteLength}
$M(c)$ is a graded $R$-module of finite length.
\end{claim}
\begin{proof}[Proof of claim]
    Conditions (a) and (a') imply that for all homogeneous prime ideal $\q\subsetneq \fram$ there is $e_{\q}$ such that 
    \[
    R_{\q} \otimes M_{e_{\q}}(c)= \ker(c \cdot G_{\q}^{e_{\q}})=\big\{\eta \in E_{\q} = H^d_{\fram}\big(R_{\q}(-P_{\q})\big) \bigm| cG_{\q}^{e_{\q}} \eta = 0\big\}=0,
    \]
    where $G_{\q}$ becomes the standard Frobenius action on $H^d_{\fram}(R_{\q})$ for the Gorenstein ring $R_{\q}$ whenever $\p \not\subset \q$. In particular, $R_{\q}\otimes M(c)=0$ for all homogeneous prime ideal $\q\subsetneq \fram$. In other words, $\Supp M(c) \subset \{\fram\}$. If $M(c)$ were finitely generated we would be done. To bybass this, one proceeds as follows. Note that $M(c)$ is the Matlis dual of the cokernel of $\Psi^e \cdot c$ for all $e \gg 0$. Hence, $\Supp \coker (\Psi^e \cdot c) \subset \{\fram\}$ for all $e \gg 0$. Let $e_0$ be one of those such $e \gg 0$. Since $\coker (\Psi^{e_0} \cdot c)$ is finitely generated, we conclude that $\fram^k \coker (\Psi^{e_0} \cdot c)=0$. Therefore, $\fram^k M_{e_0}(c)=\fram^k M(c)=0$ and the claim follows.
\end{proof}

\begin{claim} \label{claim.GStability}
$M(c) \subset E$ is a Frobenius submodule, \ie $G(M(c)) \subset M(c)$.
\end{claim}
\begin{proof}[Proof of claim]
Let $\eta \in M(c)$, \ie  $c G^e \eta =0$  for all  $e>0$. Note that $c G^e G\eta$ = $cG^{e+1} \eta = 0$ for all $e>0$.
\end{proof}

Since $G$ is injective, \autoref{claim.FiniteLength} and \autoref{claim.GStability} imply that $M(c)$ must concentrate in degree $0$ as $G \eta$ is homogeneous of degree $pn$ if $\eta$ is homogeneous of degree $n$. Thus, $[M(c)]_n=0$ unless $n=0$. However, $[M(c)]_0$ is zero by condition (b). 
\end{proof}

\begin{corollary} \label{cor.CheckingForFpurity}
With notation as in \autoref{lem.TheLemmaGorensteinCase}, $(R,P)$ is purely $F$-regular if and only if the following conditions hold
\begin{itemize}
    \item[(a)] $(R_{\q}, P_{\q})$ is purely $F$-regular for all homogeneous prime ideals $\p \subset \q \neq \fram$,
    \item[(b)] both $R$ and $R/\p$ are $F$-regular, and
    \item[(c)] $(R,P)$ is $F$-pure. 
\end{itemize}
\end{corollary}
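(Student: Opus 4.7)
The plan is to deduce this corollary directly from \autoref{lem.TheLemmaGorensteinCase} by showing that its conditions (a') and (b), taken together and under the remaining hypotheses, are equivalent to the cleaner requirement that both $R$ and $R/\p$ be $F$-regular.

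For the forward implication, I would observe that if $(R,P)$ is purely $F$-regular then condition (a) of the corollary is immediate from \autoref{pro.LocalizatioPureCase}, which in addition yields that both $R$ and $R/\p$ are $F$-regular and thereby establishes (b); while (c) is a consequence of \autoref{prop.MinPrimesareCoprime}, since purely $F$-regular pairs are $F$-pure.

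For the reverse implication, I would verify each of the four hypotheses of \autoref{lem.TheLemmaGorensteinCase}. Conditions (a) and (c) there match those of the corollary verbatim. Hypothesis (a')---namely that $R_{\q}$ is $F$-regular for every homogeneous prime $\p \not\subset \q \subsetneq \fram$---follows from the global $F$-regularity of $R$ because $F$-regularity localizes. The remaining hypothesis (b) of the proposition, the graded vanishing $[H^d_{\fram}(R(-P))]_n=0$ for $n\geq 0$, is obtained by recycling the argument already recorded in the proof of \autoref{lem.TheLemmaGorensteinCase}: since $R$ is $F$-regular and hence Cohen--Macaulay, the short exact sequence $0 \to R(-P) \to R \to R/\p \to 0$ together with the vanishings $H^{d-1}_{\fram}(R)=0$ (by depth) and $H^d_{\fram}(R/\p)=0$ (by dimension, as $\height \p = 1$) produces the canonical short exact sequence $0 \to H^{d-1}_{\fram}(R/\p) \to H^d_{\fram}(R(-P)) \to H^d_{\fram}(R) \to 0$, and the nonnegatively-graded pieces of the two outer modules vanish because $R$ and $R/\p$ are $F$-regular and hence $F$-rational standard graded $\kay$-algebras. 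I do not anticipate any real obstacle beyond this bookkeeping, as the substantive content is already contained in the preceding proposition; this corollary simply repackages its criterion in the form that is most convenient to apply in practice.
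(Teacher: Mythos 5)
Your proposal is correct and matches the argument the paper intends (the corollary is left without an explicit proof there, but your reduction to \autoref{lem.TheLemmaGorensteinCase} via \autoref{pro.LocalizatioPureCase}, \autoref{prop.MinPrimesareCoprime}, and the short exact sequence $0 \to H^{d-1}_{\fram}(R/\p) \to H^d_{\fram}\big(R(-P)\big) \to H^d_{\fram}(R) \to 0$ with the negativity of the $a$-invariants of the $F$-rational rings $R$ and $R/\p$ is exactly the computation already carried out in the paper's proof of that proposition). No gaps.
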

\begin{remark} \label{rem.SettingVariablesEqualTo1}
    In \autoref{cor.CheckingForFpurity}, we may replace (a) by saying that each localization $(R_{x_i},P_{x_i})$ is purely $F$-regular for all $x_i \notin \p$ where $\fram=(x_1,\ldots,x_n)$. For $x \in R_1$, let $\big(R_{(x)},P_{(x)}\big)$ be the degree $0$ component of $(R_{x},P_{x})$. Then, $R_{x} = \bigoplus_{i \in \bZ} R_{(x)} x^i = R_{(x)}[x,x^{-1}]$ and likewise $\p R_x = \p_{(x)} \big[x,x^{-1}\big]$ when $x \notin \p$. Thus, by \autoref{prop.SplitHomomorphismDescend}, \autoref{add_variable}, and \autoref{test_ideal_localization}, $(R_x, P_x)$ is purely $F$-regular if and only if so is $\big(R_{(x)}, P_{(x)}\big)$. In other words, we may replace (a) by saying that $\big(R_{(x_i)}, P_{(x_i)}\big)$ is purely $F$-regular for all $i=1,\ldots,n$. In more geometric terms, this means that we may replace (a) by saying that the projective pure pair obtained by taking $\Proj$ of $(R,\p)$ has purely $F$-regular singularities. 
    
    In practice, let $R=\kay[x_1,\ldots,x_n]/(f_1,\dots,f_m)$ and $\p=(g_1,\ldots, g_k)$ with $f_i,g_j$ homogeneous polynomials. Then, verifying condition (a) means proving the pure $F$-regularity of the pair
    \[
    \big(\kay[x_1,\ldots,x_n]/(f_1,\dots,f_m,x_i-1), (g_1,\ldots, g_k,x_i-1) \big)
    \]
    for all $i=1,\ldots,n$. Of course, this simply means setting $x_i=1$ in all of our equations.
\end{remark}

\section{On Determinantal Pure Pairs} \label{subsec.DeterminantalPairs} Let $m,n \geq t$ be positive integers. Let $\bm{x}=(x_{i,j})$ be an $m\times n$ matrix of indeterminates and $\kay$ be a field of characteristic $p \geq 0$. Consider the polynomial algebra 
\[
S \coloneqq \kay[\bm{x}]\coloneqq \kay[x_{i,j} \mid 1\leq i \leq m, 1 \leq j \leq n].
\]
A $t$-minor of $\bm{x}$ is the element of $S$ given by the determinant of a $t \times t$ submatrix of $\bm{x}$. We let $I_t(\bm{x}) \subset S$ be the ideal generated by all the $t$-minors of $\bm{x}$, which turns out to be a homogeneous prime ideal of $S$ of height $(m-t+1)(n-t+1)$. One defines the \emph{(generic) determinantal $\kay$-algebra} as the quotient
\[
R_t = R^{m\times n}_t \coloneqq S/I_t(\bm{x}),
\]
which is a standard graded $\kay$-algebra and moreover:

\begin{theorem}[{\cite[Proposition 1.1. and Corollary 5.17.]{BrunsVetterDeterminantalRings}}]
The determinantal ring $R^{m\times n}_t$ is a normal integral domain of dimension $(t-1)(m+n-t+1)$. Moreover, $R^{m\times n}_t$ has Cohen--Macaulay singularities.
\end{theorem}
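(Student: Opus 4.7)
The natural plan is to deduce all three assertions from the matrix-multiplication description recalled in the introduction, combined with classical results on invariant theory and $F$-singularities. Consider
\[
\mu\colon \bM^{m \times (t-1)} \times \bM^{(t-1) \times n} \longrightarrow \bM^{m \times n}, \qquad (M,N) \longmapsto MN,
\]
whose image is precisely the rank-$<t$ locus $\bM_t^{m \times n}$. Since the source is irreducible of dimension $(t-1)(m+n)$, the image $V\big(I_t(\bm x)\big)$ is irreducible, hence $\sqrt{I_t(\bm x)}$ is prime. Radicality (and therefore primality) of $I_t(\bm x)$ I would obtain by a Jacobian calculation at a matrix of rank exactly $t-1$: the Jacobian of the $t$-minors there attains full rank $(m-t+1)(n-t+1)$, so $\Spec R_t^{m \times n}$ is smooth on the dense open subset of rank-exactly-$(t-1)$ matrices, forcing generic reducedness. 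Combined with Cohen--Macaulayness below, which precludes embedded primes, this upgrades to reducedness globally.

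For dimension, the $\GL_{t-1}$-action recalled in the introduction acts freely on the preimage of the rank-$(t-1)$ locus under $\mu$, so the generic fiber of $\mu$ has dimension $(t-1)^2$. The fiber dimension theorem then gives
\[
\dim R_t^{m \times n} = (t-1)(m+n) - (t-1)^2 = (t-1)(m+n-t+1).
\]

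The substantive input is Cohen--Macaulayness; this is the main obstacle, and it is not a formal consequence of the previous steps. In characteristic $p > 0$ it follows immediately from Hochster--Huneke's theorem that $R_t^{m \times n}$ is strongly $F$-regular, recalled in the introduction. In characteristic zero, one may either invoke the invariant-theoretic presentation $R_t^{m \times n} = \kay[\bm x, \bm y]^{\GL_{t-1}}$ together with Hochster--Roberts, or reduce modulo $p$; alternatively, Hochster--Eagon's principal-radical-system technique yields a uniform, characteristic-free proof that simultaneously establishes primality and Cohen--Macaulayness. Once Cohen--Macaulayness is in hand, normality follows via Serre's criterion: the condition $S_2$ is automatic, and $R_1$ reduces to showing that the singular locus of $\bM_t^{m \times n}$, which is $\bM_{t-1}^{m \times n}$, has codimension at least $2$. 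By the dimension formula just established, the codimension equals $m + n - 2t + 3 \geq 3$ whenever $t \geq 2$ (using $m, n \geq t$), while the case $t = 1$ is trivial since $R_1^{m \times n} = \kay$.
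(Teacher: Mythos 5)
The paper offers no argument here at all: it simply cites Bruns--Vetter (Proposition 1.1 and Corollary 5.17), so any genuine proof sketch is necessarily a different route. Your reconstruction is essentially the standard one and is sound: irreducibility and the dimension count via the multiplication map $\mu$ and the free $\GL_{t-1}$-action on the rank-exactly-$(t-1)$ locus are correct (note that the image of $\mu$ is closed, being the locus $\operatorname{rank} \leq t-1$, so it really is $V(I_t)$ set-theoretically), the Jacobian computation at a rank-$(t-1)$ point is the classical one, and the Serre-criterion codimension count $m+n-2t+3 \geq 3$ is right. Two caveats are worth making explicit. First, there is a latent circularity in how you source Cohen--Macaulayness: Hochster--Roberts applies to the invariant ring $\kay[\bm x,\bm y]^{\GL_{t-1}}$, and identifying that ring with $\kay[\bm x]/I_t(\bm x)$ is precisely the second fundamental theorem of invariant theory, which already contains the primality of $I_t(\bm x)$ (so in that branch your Jacobian argument is redundant, and in characteristic $p$ the branch is unavailable anyway since $\GL_{t-1}$ is not linearly reductive); likewise the usual proofs of strong $F$-regularity of $R_t^{m\times n}$ take the Hochster--Eagon domain/CM results as input rather than delivering them. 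The clean, characteristic-free foundation is exactly your last alternative---Hochster--Eagon's principal radical systems (or the ASL/straightening-law approach that Bruns--Vetter actually use for Corollary 5.17)---which establishes primality and Cohen--Macaulayness simultaneously and should be taken as the primary reference rather than a fallback. Second, when you ``upgrade to reducedness globally'' from generic smoothness plus $S_1$, you should say that $R_0+S_1$ is being applied to $\kay[\bm x]/I_t(\bm x)$ with its possibly nonreduced structure, and that the CM-ness being invoked is CM-ness of that quotient; with the Hochster--Eagon input this is unnecessary, since primality comes packaged with it.
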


One defines the determinantal varieties $\mathbb{M}_t=\mathbb{M}_t^{m \times n}$ as $\Spec R_t$. However, for notation ease, we will formulate our statements using the algebras $R_t$ rather than the varieties $\mathbb{M}_t$.

Let us describe next the divisor class group of $R^{m\times n}_t$ for $t \geq 2$. For further details see \cite[\S8]{BrunsVetterDeterminantalRings}. Let $\bm{x}'$ be a $(t-1) \times n$ submatrix of $\bm{x}$ obtained by taking any subset of $t-1$ rows. Which choice of rows we make to define $\bm{x}'$ is irrelevant but, for the sake of concreteness, our preferred choice are the first $t-1$ rows. The ideal $I_{t-1}(\bm{x}')\subset S$ is a prime ideal of height $n-t+2$. Moreover, $I_{t-1}(\bm{x}')+I_t(\bm{x}) \subset S$ is a prime ideal of height $1+\height I_t(\bm{x})$. That is, 
\[
\p_t = \p_t^{m \times n} \coloneqq I_{t-1}(\bm{x}')R_t^{m \times n} \subset R_t^{m \times n}\]
is a prime ideal of height $1$. Thus, 
\[
P_t= P_t^{m\times n} \coloneqq V(\p_t^{m \times n}) \subset \Spec R^{m\times n}_t\] 
defines a prime divisor. The divisor class group $\Cl R_t$ is freely generated by the divisor class of $P_t$. Moreover, a canonical divisor of $R_t$ is given by $K_{R_t}=(m-n)P_t$. In particular, $(R_t,P_t)$ is log Gorenstein  if and only if $m=n-1$.

\begin{definition}
We refer to $(R_t,P_t)=(R_t^{m\times n},P_t^{m \times n})$ as a \emph{determinantal pure pair}.
\end{definition}

\begin{remark}
    We could have defined $P_t$ using columns instead of rows giving us prime divisors $Q_t$. Then, $Q_t \sim -P_t$ and $K_{R_t} = mP_t + nQ_t$. Of course, our results apply to $(R_t,Q_t)$ as well.
\end{remark}

\begin{remark}
    Let $A$ be a general commutative ring with unity and $M$ be an $m \times n$ matrix over $A$. It makes sense to define the ideal $I_t(M) \subset A$ of $t$-minors of $M$ for all $1\leq t \leq \min\{m,n\}$. It is well-known that $I_t(M)$ is an invariant of the corresponding linear transformation $A^{\oplus n} \to A^{\oplus m}$ rather than the matrix $M$ itself. In other words, $I_t(M)=I_t(UMV)$ as ideals of $A$ for all invertible matrices $U\in \GL_m(A)$ and $V\in \GL_n(A)$. See \cite[III, \S8, Proposition 10]{BourbakiAlgebre} and \cite[p. 3]{BrunsVetterDeterminantalRings}. In particular, performing (invertible) elementary row and column operations on $M$ does not affect the ideal $I_t(M)$. We will use freely this fact for the rest of this paper.
\end{remark}

\subsection{$F$-splittings on determinantal varieties}
Next, we explain the existence of a polynomial $\Delta\in S$ such that $\phi \coloneqq \Phi \cdot \Delta^{p-1} \: F_*S \to S$ induces a Frobenius splitting on all determinantal varieties, where $\Phi$ is a Frobenius trace on $S$. The results in this section are due to L. Seccia; see \cite{SecciaKnutsonIdealsOfGenericMatrices}. In fact, she obtains more general statements.

Set $\mu \coloneqq \min\{m,n\}$. The matrix $\bm{x}$ has $\mu-1$ \emph{lower diagonals}, $|n-m|+1$ \emph{main diagonals}, and $\mu-1$ \emph{upper diagonals}. To the lower diagonals, we associate minors $\alpha_1,\ldots,\alpha_{\mu-1}$, where $\alpha_i$ is an $i$-minor. Likewise, to the upper diagonals, we associate the minors $\beta_1,\ldots, \beta_{\mu-1}$, where $\beta_i$ is an $i$-minor. That is,
\[
\alpha_1 \coloneqq x_{m,1}, \alpha_2 \coloneqq \begin{vmatrix}
x_{m-1,1} & x_{m-1,2} \\
x_{m,1} & x_{m,2}
\end{vmatrix}, \alpha_3 \coloneqq \begin{vmatrix}
x_{m-2,1} & x_{m-2,2} & x_{m-2,3}\\
x_{m-1,1} & x_{m-1,2} & x_{m-1,3} \\
x_{m,1} & x_{m,2} & x_{m,3}
\end{vmatrix}, \cdots
\]
and
\[
\beta_1 \coloneqq x_{1,n}, \beta_2 \coloneqq \begin{vmatrix}
x_{1,n-1} & x_{1,n} \\
x_{2,n-1} & x_{2,n}
\end{vmatrix}, \beta_3 \coloneqq \begin{vmatrix}
x_{1,n-2} & x_{1,n-1} & x_{1,n}\\
x_{2,n-2} & x_{2,n-1} & x_{2,n} \\
x_{3,n-2} & x_{3,n-1} & x_{3,n}
\end{vmatrix}, \cdots
\]
Finally, to the $|n-m|+1$ main diagonals we associate the $\mu$-minors $\delta_1, \ldots, \delta_{|n-m|+1}$. To be precise, when $m \leq n$, we will write
\[
\delta_i \coloneqq \begin{vmatrix}
x_{1,i} & x_{1,{i+1}} & \cdots \\
x_{2,i} & x_{2,{i+1}} & \cdots \\
\vdots & \vdots & \ddots
\end{vmatrix}, \quad \forall i=1,\ldots,|n-m|+1,
\]
whereas we will write
\[
\delta_{|n-m|+2-i} \coloneqq \begin{vmatrix}
x_{i,1} & x_{i,2} & \cdots \\
x_{i+1,1} & x_{i+1,2} & \cdots \\
\vdots & \vdots & \ddots
\end{vmatrix}, \quad \forall i=1,\ldots,|n-m|+1,
\]
when $m \geq n$. Next, we write their product as
\[
\Delta \coloneqq \alpha_1 \cdots \alpha_{\mu-1} \delta_1 \cdots \delta_{|n-m|+1} \beta_{\mu-1} \cdots \beta_1.
\]
\begin{theorem}[{\cite[\S 2]{SecciaKnutsonIdealsOfGenericMatrices}}] \label{thm.MainTheoremDeterminants}
With notation as above, suppose that $p>0$ and let $\Phi \: F_* S \to S$ be a Frobenius trace on $S$. Then, $\phi \coloneqq \Phi \cdot \Delta^{p-1} \in \sC_{1,S}$ is such that $\phi(F_*1) = 1$ and $I_t(\bm{x}), I_{t-1}(\bm{x}') \in I(S,\phi)$.
\end{theorem}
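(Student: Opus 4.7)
The plan is to derive the theorem from the framework of Knutson ideals developed by L. Seccia, after verifying two things: first, that $\phi = \Phi \cdot \Delta^{p-1}$ is a Frobenius splitting on $S$; second, that both $I_t(\bm{x})$ and $I_{t-1}(\bm{x}')$ sit inside the Knutson class of $\Delta$ in $S$.

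For the splitting condition $\phi(F_* 1) = \Phi(F_* \Delta^{p-1}) = 1$, I would first observe that a standard Frobenius trace on $S = \kay[\bm{x}]$ sends $F_* \bm{x}^{\bm{a}}$ to $\bm{x}^{(\bm{a}-(p-1)\bm{1})/p}$ when every coordinate of $\bm{a}$ is $\equiv p-1 \pmod{p}$, and to $0$ otherwise. A direct degree count (adding the sizes of the lower, main, and upper diagonal minors) gives $\deg \Delta = \mu (\mu - 1) + \mu (|n-m|+1) = mn$, so $\Delta^{p-1}$ is homogeneous of total degree $mn(p-1)$. Any nonnegative exponent vector $\bm{a}$ with $\bm{a} \equiv (p-1)\bm{1} \pmod p$ and $\sum a_{ij} = mn(p-1)$ is forced to be exactly $(p-1)\bm{1}$. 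Thus the only monomial $\Phi$ detects in $F_* \Delta^{p-1}$ is $\prod_{i,j} x_{ij}^{p-1}$, and it suffices to check that this monomial has coefficient $1$ in $\Delta^{p-1}$. For this, I would use the key combinatorial fact that every variable $x_{ij}$ lies on a unique diagonal (lower, main, or upper) of $\bm{x}$; consequently, multiplying the identity-permutation (``principal'') terms of $\alpha_1, \dots, \alpha_{\mu-1}, \delta_1, \dots, \delta_{|n-m|+1}, \beta_{\mu-1}, \dots, \beta_1$ yields $\prod_{i,j} x_{ij}$ with coefficient $+1$, and any nontrivial permutation appearing in any determinantal expansion creates a variable with exponent $\geq 2$ within a single factor, which by the same degree argument prevents that product from contributing to the squarefree-maximal exponent vector after being raised to the $(p-1)$-th power.

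For the compatibility statements $I_t(\bm{x}), I_{t-1}(\bm{x}') \in I(S,\phi)$, I would invoke Seccia's theory. By the Fedder-type reformulation used throughout the paper, $\phi$ is compatible with an ideal $J$ if and only if $\Delta^{p-1} \cdot J \subset J^{[p]}$. The \emph{Knutson class} $\mathcal{K}_\Delta$ of $\Delta$ is the smallest family of ideals of $S$ containing $(\Delta)$ and stable under sums, intersections, and passage to associated primes; once $\phi$ is known to be a splitting, Seccia establishes that every ideal in $\mathcal{K}_\Delta$ is automatically a $\phi$-ideal. The substantive content is then that both $I_t(\bm{x})$ and $I_{t-1}(\bm{x}')$ belong to $\mathcal{K}_\Delta$, which is precisely what is carried out in \cite[\S 2]{SecciaKnutsonIdealsOfGenericMatrices}: starting from the minimal primes of $(\Delta)$ (which are the ideals cut out by the individual diagonal minors $\alpha_i$, $\delta_j$, $\beta_k$), one builds $I_t(\bm{x})$ and $I_{t-1}(\bm{x}')$ via iterated intersections and associated-prime operations. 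Combining this with the splitting property verified above, both compatibility assertions follow immediately. The main obstacle lies entirely in this last combinatorial step, which requires a careful bookkeeping of how the diagonal-minor ideals of $\bm{x}$ decompose and recombine to produce the full ideal of $t$-minors and the ideal of $(t-1)$-minors of a row-submatrix; since this is exactly what Seccia establishes, I would simply appeal to her results rather than reproving them.
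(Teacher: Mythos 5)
Your proposal is correct in substance but routes the main content through Seccia's Knutson-ideal machinery, whereas the paper deliberately avoids it. For the compatibility statements the paper gives a (partly) self-contained argument: it first reduces $I_t(\bm{x})$ to $I_{t-1}(\bm{x}')$ by repeatedly exploiting prime decompositions of the form $I_t(\bm{y})+I_t(\bm{z})=I_t(\bm{x})\cap I_{t-1}(\bm{w})$ together with the stability of $\phi$-ideals under sums and passage to minimal primes (\autoref{prop.BasicsCompatibleIdeals}); it then verifies the Fedder-type containment $\Delta^{p-1}\gamma_i\in(\gamma_1^p,\dots,\gamma_h^p)$ directly after localizing at $\p=I_{t-1}(\bm{x}')$, where by \autoref{rem.ReductionToCOmpleteIntersection} the ideal is generated up to radical by a system of parameters $\gamma_1,\dots,\gamma_h$, and Sylvester's determinant identity exhibits $\Delta^{p-1}$ as a unit times $\gamma_1^{p-1}\cdots\gamma_h^{p-1}$ modulo $(\gamma_1^p,\dots,\gamma_h^p)$. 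Your appeal to Seccia is legitimate---the theorem is attributed to her---but it reduces the proof to a citation of precisely the hard combinatorial step, whereas the paper's route makes the two specific compatibilities checkable without importing the full theory of Knutson ideals.

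One step of your splitting argument is not correct as stated: a nontrivial permutation term of a single minor is itself squarefree, so it does not ``create a variable with exponent $\geq 2$ within a single factor.'' Moreover, even granting that each individual degree-$mn$ term of $\Delta$ other than the diagonal one fails to be $\prod_{i,j}x_{i,j}$, you must still rule out that non-diagonal terms chosen in different copies of $\Delta$ inside $\Delta^{p-1}$ have compensating excesses and deficits whose product is nonetheless $\prod_{i,j}x_{i,j}^{p-1}$, possibly with cancellation against the diagonal contribution. This is exactly what a term order eliminates: $\prod_{i,j}x_{i,j}$ is the \emph{leading} monomial of $\Delta$ for a suitable order (each variable lies on exactly one diagonal), hence $\prod_{i,j}x_{i,j}^{p-1}$ is the leading monomial of $\Delta^{p-1}$ and its coefficient is a unit, with no cancellation possible. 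The paper sidesteps this by citing Knutson's theorem; you should either do the same or supply the term-order argument in place of your exponent heuristic.
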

As mentioned above, Seccia's results are much stronger. She proves that ideals of $t$-minors of any submatrix of adjacent rows or adjacent columns are Knutson ideals of $\Delta$ and so they belong to $I(S,\phi)$. The results claimed in \autoref{thm.MainTheoremDeterminants} are much simpler and we give a proof below that partly deviates from Seccia's. But first, we establish the corollary we need for the purpose of this work.

\begin{corollary}
    \label{thm.FpurityDeterminantalPairs}
With notation as above, $(R_t^{m\times n},P_t^{m \times n})$ is $F$-pure if $p>0$.
\end{corollary}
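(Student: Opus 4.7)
The plan is to descend the $S$-linear map $\phi = \Phi \cdot \Delta^{p-1}$ produced by Seccia's \autoref{thm.MainTheoremDeterminants} to the quotient $R_t = S/I_t(\bm{x})$, and to observe that the descended map witnesses $F$-purity of the pure pair $(R_t,P_t)$ directly via \autoref{def.Fpurity}.

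First, since by \autoref{thm.MainTheoremDeterminants} we have $I_t(\bm{x}) \in I(S,\phi)$, the map $\phi \: F_*S \to S$ sends $F_* I_t(\bm{x})$ into $I_t(\bm{x})$ and hence descends to a well-defined $R_t$-linear map $\bar\phi \: F_* R_t \to R_t$. Because $\phi(F_* 1) = 1$, the descent satisfies $\bar\phi(F_* 1) = 1$, so $\bar\phi$ is surjective.

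Second, I would check that $\bar\phi \in \sC_{1,R_t}^{P_t}$. By \autoref{rem.CartierAlgebrasMaps}, this Cartier algebra coincides with $\sC_{1,R_t}^{[\p_t]}$, so it suffices to verify that $\bar\phi(F_* \p_t) \subset \p_t$. Since $I_{t-1}(\bm{x}') \in I(S,\phi)$ as well, the map $\phi$ preserves $I_{t-1}(\bm{x}') + I_t(\bm{x})$; reducing modulo $I_t(\bm{x})$ yields exactly $\bar\phi(F_* \p_t) \subset \p_t$, as required.

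Combining the two points, the graded piece $\sC_{1,R_t}^{P_t}$ contains the surjective map $\bar\phi$, hence $\big(\sC_{R_t}^{P_t}\big)_+ R_t = R_t$ and $(R_t,P_t)$ is $F$-pure by \autoref{def.Fpurity}. There is no real obstacle here: Seccia's theorem does all of the substantive work, and the argument reduces to a formal quotient-descent observation that simultaneous compatibility with $I_t(\bm{x})$ and $I_{t-1}(\bm{x}')$ on $S$ yields $\p_t$-compatibility on $R_t$.
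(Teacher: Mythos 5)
Your argument is correct and is essentially identical to the paper's own proof: both descend Seccia's splitting $\phi = \Phi\cdot\Delta^{p-1}$ to $R_t$ using $I_t(\bm{x})\in I(S,\phi)$, and both use that $I_t(\bm{x})+I_{t-1}(\bm{x}')$ is a $\phi$-ideal (closure of $I(S,\phi)$ under sums) to see that the descended splitting is a $\p_t$-compatible surjection. Nothing further is needed.
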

\begin{proof}
    By \autoref{thm.MainTheoremDeterminants}, $I_t(\bm{x})$ and $I_{t-1}(\bm{x}')+I_t(\bm{x})$ belong to $I(S,\phi)$. Let $\varphi \: F_* R_t^{m \times n} \to R_t^{m \times n}$ be the induced map by $\phi$.  Then, $\varphi$ is a $P_t^{m\times n}$-map and $\varphi(F_* 1) =1$.
\end{proof}

To show \autoref{thm.MainTheoremDeterminants}, we will need the following observation.
\begin{lemma} \label{rem.ReductionToCOmpleteIntersection}
With notation as above, suppose that $m \leq n$ and write $\bm{x} = \begin{pmatrix}
    \bm{x}_1 & \cdots & \bm{x}_n \end{pmatrix}$, \ie $\bm{x}_i$ is the $i$-th column of $\bm{x}$. For all $i \in \{1, \ldots, n-m+1\}$, let $\gamma_i$ be the $m$-minor of $\bm{x}$ defined as follows:
   \[
\gamma_{i} \coloneqq \begin{vmatrix}
    \bm{x}_i & \bm{x}_{n-m+2} & \cdots & \bm{x}_n \end{vmatrix}.
   \] 
Then, we have the following prime decomposition in $S$:
\[
\sqrt{(\gamma_1, \ldots, \gamma_{n-m+1})}=I_{m}(\bm{x}) \cap I_{m-1}\begin{pmatrix}
    \bm{x}_{n-m+2} & \cdots & \bm{x}_n \end{pmatrix}.
\]
In particular, $I_{m}(\bm{x})$ is a minimal prime of $(\gamma_1, \ldots, \gamma_{n-m+1})$.
\end{lemma}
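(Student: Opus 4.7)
The plan is to analyze $\sqrt{\fra}$ where $\fra \coloneqq (\gamma_1, \ldots, \gamma_{n-m+1})$ via Laplace expansion. Let $\bm{y} \coloneqq \begin{pmatrix} \bm{x}_{n-m+2} & \cdots & \bm{x}_n \end{pmatrix}$ denote the $m\times(m-1)$ matrix of the last $m-1$ columns of $\bm{x}$, set $J \coloneqq I_{m-1}(\bm{y})$, and let $M_j$ be the signed $(m-1)$-minor of $\bm{y}$ obtained by deleting row $j$. Cofactor expansion of $\gamma_i$ along its first column yields the identity
\[
\gamma_i = \sum_{j=1}^m x_{j,i}\, M_j,
\]
where crucially each $M_j$ is independent of $i$. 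Since each $\gamma_i$ is itself an $m$-minor of $\bm{x}$ and is visibly in $J = (M_1,\ldots,M_m)$, we obtain $\fra \subseteq I_m(\bm{x}) \cap J$. The intersection of the two prime ideals $I_m(\bm{x})$ and $J$ is radical, hence $\sqrt{\fra} \subseteq I_m(\bm{x}) \cap J$.

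For the reverse containment, I will show that every minimal prime of $\fra$ either equals $I_m(\bm{x})$ or contains $J$; intersecting over all minimal primes then yields $\sqrt{\fra} \supseteq I_m(\bm{x}) \cap J$. Since $\fra$ is generated by $n-m+1$ elements and is contained in $I_m(\bm{x})$, which is prime of height $(m-m+1)(n-m+1) = n-m+1$, Krull's height theorem confirms that $I_m(\bm{x})$ is itself a minimal prime of $\fra$. So let $\p$ be any minimal prime of $\fra$ with $\p \not\supseteq J$. Pick some $M_j \notin \p$ and localize at $M_j$. The identity $\gamma_i = x_{j,i}M_j + \sum_{k\neq j} x_{k,i} M_k$ with $M_j$ invertible solves each $x_{j,i}$, $i=1,\ldots,n-m+1$, as a polynomial in the remaining variables: this is possible because the $M_k$'s only involve entries of columns $n-m+2,\ldots,n$, which are disjoint from the variables $x_{j,1},\ldots,x_{j,n-m+1}$ being eliminated. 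Consequently $S_{M_j}/\fra S_{M_j}$ is a localization of a polynomial ring over $\kay$, hence a domain of Krull dimension $mn-(n-m+1)$, so $\fra S_{M_j}$ is a prime ideal of height $n-m+1$ in $S_{M_j}$.

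Since $M_j$ is homogeneous of degree $m-1 < m$ while $I_m(\bm{x})$ sits in degrees $\geq m$, we have $M_j \notin I_m(\bm{x})$, so $I_m(\bm{x}) S_{M_j}$ is also a prime of height $n-m+1$. Two comparable primes of equal height in a Noetherian ring must coincide (otherwise extending a saturated chain realizing the height of the smaller by the larger on top would strictly exceed its height), giving $\fra S_{M_j} = I_m(\bm{x}) S_{M_j}$. Contracting back to $S$, we conclude $\p = I_m(\bm{x})$, contradicting the choice of $\p$. This completes the reverse inclusion, and the ``in particular'' statement is the first observation of the previous paragraph. The main technical point is the dimension count that establishes primality of $\fra S_{M_j}$, which works out cleanly only because the $M_j$'s do not depend on $i$---this is the one structural feature of the $\gamma_i$'s that the argument genuinely exploits.
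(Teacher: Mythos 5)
Your argument is correct in substance, but it proves the hard inclusion by a genuinely different route from the paper. The forward inclusion (Laplace expansion along the first column) is the same. For the reverse inclusion the paper argues pointwise: over an algebraic closure of $\kay$, a matrix killing all the $\gamma_i$ whose last $m-1$ columns are linearly independent must have every column in their span, hence rank $<m$; the Nullstellensatz then converts $V(\fra)\subseteq V(I_m(\bm{x}))\cup V(J)$ into the ideal-theoretic statement. You instead work with the ideals directly: after inverting one maximal minor $M_j$ of the last $m-1$ columns, the relations $\gamma_i=\sum_k x_{k,i}M_k$ eliminate the variables $x_{j,1},\dots,x_{j,n-m+1}$, so $\fra S_{M_j}$ is prime of height $n-m+1$ and therefore coincides with $I_m(\bm{x})S_{M_j}$; contracting identifies any minimal prime of $\fra$ not containing $J$ with $I_m(\bm{x})$. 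Your route avoids the Nullstellensatz and the passage to the algebraic closure and yields the slightly stronger local statement $\fra S_{M_j}=I_m(\bm{x})S_{M_j}$; the paper's is shorter and more elementary. Both are valid.

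Two blemishes in the write-up. First, the appeal to Krull's height theorem to conclude that $I_m(\bm{x})$ is a minimal prime of $\fra$ is not a valid inference: ``$\p\supseteq\fra$, $\fra$ generated by $c$ elements, $\height\p=c$'' does not imply that $\p$ is minimal over $\fra$ --- for instance $(x,y)\supseteq(x^2,xy)$ in $\kay[x,y]$, with $c=2=\height(x,y)$, yet $(x)$ lies strictly in between. This is harmless here, since the claim is not needed for the displayed equality and does follow both from your own localization computation (where $I_m(\bm{x})S_{M_j}=\fra S_{M_j}$ is visibly minimal over $\fra S_{M_j}$) and from the equality itself combined with $J\not\subseteq I_m(\bm{x})$ (degree reasons); but the ``in particular'' should be rerouted through one of those. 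Second, the closing ``contradicting the choice of $\p$'' is a non sequitur: $\p=I_m(\bm{x})$ does not contradict $\p\not\supseteq J$; it is precisely the other horn of the dichotomy you announced, which is all the reverse inclusion requires.
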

\begin{proof}
Set $h \coloneqq n-m+1 = \height I_m(\bm{x})$. By definition, $\gamma_i \in I_m(\bm{x})$ for all $i$. In computing $\gamma_i$, we may do it as a Laplace expansion along the first column of $\begin{pmatrix}
    \bm{x}_i & \bm{x}_{h+1} & \cdots & \bm{x}_n \end{pmatrix}$. This implies that each $\gamma_i$ belongs to the ideal generated by the $m-1$ minors of $\begin{pmatrix}
    \bm{x}_{h+1} & \cdots & \bm{x}_n \end{pmatrix}$, which is none other than $I_{m-1}\begin{pmatrix}
    \bm{x}_{h+1} & \cdots & \bm{x}_n \end{pmatrix}$. Hence, the inclusion ``$\subset$'' holds.

    To show the converse inclusion ``$\supset$,'' we use Hilbert's Nullstellensatz. Let $K$ be an algebraic closure of $\kay$. Let $A = \begin{pmatrix} v_1 & \cdots & v_n \end{pmatrix}$ be an $m \times n$ matrix over $K$ with vector columns $v_i \in K^m$.  Suppose that $\gamma_i(A)=0$ for all $i\in \{1,\ldots,h\}$. That is, for all $i\in \{1,\ldots,h\}$, the vectors $v_i,v_{h+1},\ldots,v_n \in K^m$ are linearly dependent. Suppose also that $A$ is not a $K$-point of the $K$-variety cut out by $I_{m-1}\begin{pmatrix}
    \bm{x}_{h+1} & \cdots & \bm{x}_n \end{pmatrix}$. That is, the vectors $v_{h+1},\ldots,v_n \in K^m$ are linearly independent. Therefore, every $v_i$ belongs to the $K$-span of $v_{h+1},\ldots,v_n$. Hence, the $K$-span of $v_1,\ldots, v_n$ has dimension $<m$ and so the rank of $A$ is $<m$. This means that $A$ is a $K$-point of the variety cut out by $I_m(\bm{x})$; as required.   
\end{proof}

\begin{proof}[Proof of \autoref{thm.MainTheoremDeterminants}]
The $F$-splitting part follows as in \cite{SecciaKnutsonIdealsOfGenericMatrices} by using \cite[Theorem 2]{KnutsonFrobeniusSplittingPointCountingDegeneration}. We explain next why it suffices to show that $I_{t-1}(\bm{x}') \in I(S,\phi)$, for all $t \geq 2$. To this end, let us consider the following prime decompositions:
\[
I_t(\bm{y}) + I_t(\bm{z})  = I_t(\bm{x}) \cap I_{t-1}(\bm{w})
\]
where $\bm{y}$ (resp. $\bm{z}$) is obtained by considering the first (resp. last) $m-1$ rows of $\bm{x}$ and $\bm{w}$ is given by excluding first and last row of $\bm{x}$. The inclusion ``$\subset$'' is clear, and the converse follows by comparing $K$-points in some algebraic closure $K$ of $\kay$; just as in the proof of \autoref{rem.ReductionToCOmpleteIntersection}.\footnote{These prime decompositions are also key in Seccia's work \cite{SecciaKnutsonIdealsOfGenericMatrices}.}

In particular, $I_t(\bm{x}) \in I(S,\phi)$ if $I_t(\bm{y}), I_t(\bm{z}) \in I(S,\phi)$. By iterating this process, we see that it suffices to prove that $I_t(\bm{x}'') \in I(S,\phi)$ for any submatrix $\bm{x}''$ of $\bm{x}$ consisting of $t$ adjacent rows. Next, we explain why we may further reduce to the case in which we take either the first $t$ rows or last $t$ rows.

Suppose $\bm{x}''$ is a submatrix of $t$ adjacent rows containing neither the first nor last row. Let $\bm{r}_1$ be the row preceding $\bm{x}''$ and $\bm{r}_2$ be the one proceeding it. Then, using the same type of prime decomposition we started with this proof, we have
\[
I_{t+1}\begin{pmatrix}
    \bm{r}_1 \\
    \bm{x}''
\end{pmatrix}
+ 
I_{t+1}\begin{pmatrix}
    \bm{x}'' \\
    \bm{r}_2
\end{pmatrix} = I_{t+1}\begin{pmatrix}
    \bm{r}_1 \\
    \bm{x}'' \\
    \bm{r}_2
\end{pmatrix} \cap I_t(\bm{x}'').
\]
Therefore, $I_t(\bm{x}'')$ is a $\phi$-ideal if so are both summands on the left hand side of the displayed equality. By iterating this process, we have reduced to the case in which $\bm{x}''$ consists either of the first $t$ rows or last $t$ rows; as required. By the symmetry, we may reduce further to the former case. This explains why it suffices to show $I_{t-1}(\bm{x}') \in I(S,\phi)$, which we do next.

Set $\p \coloneqq I_{t-1}(\bm{x}')$ and $h \coloneqq \height \p = n-t+2$. For all $i=1,\ldots,h$, define $\gamma_i \in \p$ as in \autoref{rem.ReductionToCOmpleteIntersection} with $\bm{x}'$ in place of $\bm{x}$. The inclusion $\phi(F_*\p) \subset \p$ can be verified after localizing at $\p$. That is, it suffices to prove that $\p S_{\p} \in I(S_{\p},\phi_\p)$ where $\phi_{\p} \: F_* S_{\p} \to S_{\p}$. By \autoref{rem.ReductionToCOmpleteIntersection}, the elements $\gamma_1, \ldots, \gamma_{h} \in \p S_{\p}$ define a system of parameters and so it suffices to show that
\[
\Delta^{p-1} \gamma_i \in \big(\gamma_1^p,\ldots,\gamma_{h}^p\big)S_{\p}, \quad \forall i\in  \{1,\ldots, h\}.
\]

Note that $\beta_{t-1}=\gamma_{h}$, so the case $i=h$ is immediate. For the remaining cases, it will be convenient to use the following notation. First, let us set $\beta_0 \coloneqq 1$. For every $i=\mu, \ldots, \nu \coloneqq \max \{m,n\}$, let $\bm{x}_i$ the $\mu \times \mu$ submatrix of $\bm{x}$ used to define the $\mu$-minor $\delta_{\nu +1-i}$. If $\bm{x}_{i}$ does not intersect every column of $\bm{x}$ (which only happens if $m < n$), let us expand it to the following $i\times i$ matrix:
\[
\tilde{\bm{x}}_{i} \coloneqq \begin{pmatrix}
    \bm{x}_i & \bm{x}'' &  \\
    \bm{0} & I_{i-\mu}
\end{pmatrix}
\]
where $\bm{x}''$ is the submatrix of $\bm{x}$ consisting of its last $i-\mu$ columns and  $I_{i-\mu}$ is the identity matrix of size $i-\mu$. We then set:
\[
\beta_i \coloneqq |\tilde{\bm{x}}_{i}| = |\bm{x}_{i}|=\delta_{\nu+1-i}.
\]

With the above in place, by using the so-called \emph{Sylvester's determinant identity} (see \cite{AkritasAkritasMalaschonkSylvesterIdentity} and the references therein), we see that
\[
\beta_{t-2}^k \beta_{t+k-1} = \begin{vmatrix}
\gamma_{h-k}&\cdots & \gamma_{h-2} & \gamma_{h-1} & \gamma_{h} \\
*&\cdots & * & * & * \\
\vdots &\ddots & \vdots & \vdots & \vdots \\
*&\cdots & * & * & *
\end{vmatrix}\eqqcolon \Gamma_{k}, \quad \forall k\in \{0,1,\ldots,h-1\},
\]
where the displayed matrix has size $(k+1) \times (k+1)$ and consists of the $(t-1)$-minors of the matrix we used to define $\beta_{t+k-1}$. Note that the matrix defining $\beta_{t-1}=\gamma_{h}$ sits in the right upper corner.

In particular, since $\beta_{t-2} \not\in \p$, we obtain the following equalities in $S_{\beta_{t-2}} \subset S_{\p}$:
\[
\Delta = \alpha_1 \cdots \alpha_{\mu-1} \beta_{\nu} \cdots \beta_1 =  \frac{\alpha_1 \cdots \alpha_{\mu-1} \beta_1 \cdots \beta_{t-2}  \Gamma_{h-1} \cdots \Gamma_0 }{\beta_{t-2}^{1+2+\cdots + (h-1)}}.
\]
Since there is an expansion of the form
\[
\Gamma_k^{p-1} = \sum_{i_{h-k}+ \cdots + i_h =p-1} c_{i_{h-k},\ldots,i_{h}} \gamma_{h-k}^{i_{h-k}} \cdots \gamma_{h}^{i_h},
\]
we obtain that
\[
(\Gamma_{h-1} \cdots \Gamma_0)^{p-1} \equiv c \gamma_1^{p-1} \cdots \gamma_h^{p-1}\bmod \big(\gamma_1^p,\ldots, \gamma_h^p\big)
\]
for some $c \in S$. Therefore,
\[
\Delta^{p-1} \equiv c' \gamma_1^{p-1} \cdots \gamma_h^{p-1} \bmod \big(\gamma_1^p,\ldots, \gamma_h^p\big)
\]
for some $c' \in S_{\beta_{t-2}} \subset S_{\p}$. Thus, $\Delta^{p-1} \gamma_i \in \big(\gamma_1^p,\ldots,\gamma_h^p\big) \subset S_{\p}$; as required.
\end{proof}
In the spirit of \cite{SecciaBinomialEdgeIdealsOfWeaklyClosedGraphs}, we may ask:
\begin{question}
   With notation as above, what are all the centers of $F$-purity of $(S,\phi)$?
\end{question}

\section{Proof of the Main Theorem}

In this section, we show our main result, which we recall next.

\begin{theorem} \label{thm.MainTheorem} 
Let $\kay$ be a field of characteristic $p\geq 0$ and $m,n \geq t \geq 2$ be a triple of integers. Let $(R_t^{m\times n},P_t^{m \times n})$ be a determinantal pure pair over $\kay$, with notation as in \autoref{subsec.DeterminantalPairs}. The following statements hold:
\begin{itemize}
    \item If $p>0$ then $(R_t^{m\times n},P_t^{m \times n})$ is purely $F$-regular.
    \item If $p=0$ and $m=n-1$ then $(R_t^{m\times n},P_t^{m \times n})$ is PLT.
    \item If $p=0$ then $(R_t^{m\times n},P_t^{m \times n})$ is of PLT-type.
\end{itemize}
\end{theorem}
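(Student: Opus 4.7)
The plan is to follow the four-step strategy sketched in the introduction. First, for the positive characteristic case, I would invoke the pure morphism $f\colon\bM_t^{k\times l}\to\bM_t^{m\times n}$ with $l=k+1$ and $P_t^{k\times l}$ pulling back to $P_t^{m\times n}$, and apply \autoref{prop.SplitHomomorphismDescend} to reduce pure $F$-regularity of $(R_t^{m\times n}, P_t^{m\times n})$ to the log Gorenstein case $n=m+1$, where $K_{R_t}\sim(m-n)P_t=-P_t$ is Cartier. The same reduction, with \cite[Theorem 2.10]{ZhuangDirecSummandsKLTSingularities} in place of \autoref{prop.SplitHomomorphismDescend}, handles the third bullet (PLT-type in characteristic $0$). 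For the second bullet, in the log Gorenstein case $n=m+1$ over characteristic $0$, the reduction modulo $p$ equivalence of \cite[Theorem 4.6]{TakagiWatanabeFsingularities} derives PLT from pure $F$-regularity after reduction to a dense set of positive characteristics. So everything reduces to establishing the first bullet when $n=m+1$ and $p>0$.

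For this core case I would apply \autoref{cor.CheckingForFpurity} together with \autoref{rem.SettingVariablesEqualTo1}, which reduces pure $F$-regularity of $(R_t^{m\times(m+1)},P_t^{m\times(m+1)})$ to three verifications:
(a) the dehomogenization of the pair at every variable $x_{ij}\notin\p_t^{m\times(m+1)}$ (i.e., the pair obtained by setting $x_{ij}=1$) is purely $F$-regular;
(b) both $R_t^{m\times(m+1)}$ and $R_t^{m\times(m+1)}/\p_t^{m\times(m+1)}$ are $F$-regular; and
(c) the pair $(R_t^{m\times(m+1)},P_t^{m\times(m+1)})$ is $F$-pure.
Condition (c) is exactly \autoref{thm.FpurityDeterminantalPairs}, the application of Seccia's simultaneously $I_t(\bm{x})$- and $I_{t-1}(\bm{x}')$-compatible $F$-splitting. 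For (b), the $F$-regularity of $R_t^{m\times(m+1)}$ is the classical theorem of Hochster--Huneke \cite{HochsterHunekeTCParameterIdealsAndSplitting}, while I would verify $F$-regularity of the quotient $R_t^{m\times(m+1)}/\p_t^{m\times(m+1)}$ by identifying it as a (one-sided/ladder) determinantal-type ring whose $F$-regularity is known in the literature.

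The main obstacle is verifying condition (a), which I would attack by induction on the triple $(t,m)$ (with $n=m+1$ throughout). Given a variable $x_{ij}\notin\p_t^{m\times(m+1)}$, invertible row and column operations move $x_{ij}$ to a corner of $\bm{x}$, and then the classical Schur-complement reduction (a sequence of elementary operations, hence preserving all ideals of minors) transforms $\bm{x}$ into a block form with $x_{ij}$ alone in its corner and an $(m-1)\times m$ generic block $\bm{y}$ in the opposite block. The key identity
\[
R_t^{m\times(m+1)}\bigl[x_{ij}^{-1}\bigr]\;\cong\;R_{t-1}^{(m-1)\times m}[\text{free variables},\,x_{ij}^{\pm 1}]
\]
displays the localization as a polynomial extension of a smaller log Gorenstein determinantal ring, on which the inductive hypothesis applies through \autoref{prop.SplitHomomorphismDescend} and \autoref{add_variable}. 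When $i$ lies among the first $t-1$ rows defining $\p$, a parallel computation on $(t-1)$-minors of $\bm{x}'$ identifies the extended prime as precisely $\p_{t-1}^{(m-1)\times m}$, and the induction closes cleanly. The delicate case is when $i$ lies outside those first $t-1$ rows: the extended prime then appears as a sum of twisted subdeterminantal ideals (schematically $\sum_k x_{k,1}\cdot I_{t-2}(\bm{y}'_{\hat k})$), and the real technical content lies in recognizing that the resulting localized pair still falls within the inductive framework---possibly via an additional change of variables, or by verifying pure $F$-regularity directly using the explicit Seccia splitting pulled back to the smaller ring. The base case $t=2$ is immediate: $\p$ is then linear and every allowed dehomogenization collapses to a polynomial ring with a hyperplane prime divisor, which is trivially purely $F$-regular.
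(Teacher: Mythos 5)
Your overall architecture matches the paper's: reduce to the log Gorenstein case $n=m+1$ via \autoref{prop.SplitHomomorphismDescend} and Zhuang's theorem, get the characteristic-zero statements by spreading out via Takagi--Watanabe, and attack the positive-characteristic log Gorenstein case through the graded criterion of \autoref{cor.CheckingForFpurity} and \autoref{rem.SettingVariablesEqualTo1}, with Seccia's splitting supplying $F$-purity. However, there is a genuine gap exactly where you flag the ``delicate case,'' and you do not close it. The class of determinantal pure pairs $(R_t^{m\times(m+1)},P_t^{m\times(m+1)})$ is \emph{not} stable under the dehomogenizations demanded by condition (a): inverting an entry of $\bm{x}$ lying outside the first $t-1$ rows does send $R_t^{m\times(m+1)}$ to a polynomial extension of $R_{t-1}^{(m-1)\times m}$, but the image of $\p_t=I_{t-1}(\bm{x}')$ becomes the ideal of $(t-1)$-minors of a matrix $\begin{pmatrix}\tilde{\bm{x}}' & \bm{v}\end{pmatrix}$ that mixes rows of the new generic matrix with a column $\bm{v}$ of variables that are \emph{free} in the smaller determinantal ring. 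This is not $\p_{t-1}^{(m-1)\times m}$, nor linearly equivalent to it in any way your induction can use, so the inductive hypothesis simply does not apply. Saying the content ``lies in recognizing that the resulting localized pair still falls within the inductive framework'' is the assertion to be proved, not a proof; neither a change of variables nor a direct appeal to the Seccia splitting is supplied, and neither works off the shelf (an explicit splitting gives $F$-purity, not pure $F$-regularity).

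The paper resolves this by enlarging the inductive family: \autoref{setup.AuxiliaryPairs} introduces pairs indexed by $(m,t,s)$ with $\p=I_s\begin{pmatrix}\bm{x}' & \bm{z}\end{pmatrix}$ for an auxiliary block $\bm{z}$ of $k=s-(t-1)$ free columns, recovering the determinantal pairs at $k=0$. One must then re-verify everything for this larger family --- that $\p$ has height one (\autoref{prop.Height1}), that $-P$ is canonical (\autoref{pro.LogGorensteinVerification}), and conditions (b) and (c) of \autoref{cor.CheckingForFpurity} (\autoref{pro.SimplificationInductivehypothesis}, where $F$-purity for $k>0$ needs its own induction back to the $k=0$ case of \autoref{thm.FpurityDeterminantalPairs}) --- and the three localizations (at entries of $\bm{w}$, $\bm{z}$, $\bm{x}'$) then give reductions $(m,t,s)\rightsquigarrow(m-1,t-1,s)$, $(m-1,t,s-1)$, $(m-1,t-1,s-1)$, closing a double induction on $s$ and $t$ (\autoref{pro.mainProposition}). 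A secondary omission: for the PLT-type statement when $m\geq n$, applying Zhuang's theorem requires identifying the cycle-theoretic pullback of $P_t^{m\times n}$, i.e.\ showing that $\p_t^{m\times(m+1)}$ is the unique height-one primary component of the extended ideal with multiplicity one; you assert the pullback statement without proof, whereas the paper devotes two nontrivial claims (a common-uniformizer computation and a Nullstellensatz-based primary decomposition) to it in \autoref{pro.reductiontoLoggorCase}.
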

Before getting to the proof of \autoref{thm.MainTheorem}, we establish a few reductions. First, we explain why we may assume that $(R_t^{m\times n},P_t^{m \times n})$ is log Gorenstein, \ie $m=n-1$.

\begin{proposition} \label{pro.reductiontoLoggorCase}
For a fixed $t \geq 2$, suppose that \autoref{thm.MainTheorem} holds for all $m,n \geq t$ such that $m=n-1$. Then, \autoref{thm.MainTheorem} holds for all $m,n \geq t$.  
\end{proposition}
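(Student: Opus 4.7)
The plan is to realize $R_t^{m \times n}$ as a split subring of a log Gorenstein determinantal ring $R_t^{k \times l}$ (with $l = k+1$) via the inclusion of the upper-left $m \times n$ block, and then descend pure $F$-regularity (resp.\ PLT-type) from $R_t^{k \times l}$ along this split inclusion using \autoref{prop.SplitHomomorphismDescend} (resp.\ \cite[Theorem 2.10]{ZhuangDirecSummandsKLTSingularities}). If $m = n - 1$ there is nothing to prove, so I would assume otherwise and set $k \coloneqq \max\{m, n-1\}$ and $l \coloneqq k+1 = \max\{m+1, n\}$. Then $k \geq m \geq t$, $l \geq n$, $k, l \geq t$, and $l = k+1$, so the hypothesis of the proposition applies to $(R_t^{k \times l}, P_t^{k \times l})$.

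Next, I would construct the transfer maps explicitly. Let $\theta \: R_t^{m \times n} \to R_t^{k \times l}$ be the $\kay$-algebra homomorphism sending $x_{i,j}^{(m,n)} \mapsto x_{i,j}^{(k,l)}$; this is well-defined because every $t$-minor of $\bm{x}^{(m,n)}$ is also a $t$-minor of $\bm{x}^{(k,l)}$. Let $T \: R_t^{k \times l} \to R_t^{m \times n}$ be the $\kay$-algebra homomorphism sending $x_{i,j}^{(k,l)} \mapsto x_{i,j}^{(m,n)}$ when $i \leq m$ and $j \leq n$, and $x_{i,j}^{(k,l)} \mapsto 0$ otherwise. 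This is well-defined because a $t$-minor of $\bm{x}^{(k,l)}$ contained in the upper-left block maps to the corresponding $t$-minor of $\bm{x}^{(m,n)}$ (which vanishes in $R_t^{m \times n}$), while any $t$-minor using a row $> m$ or a column $> n$ has an entire row or column sent to zero and so vanishes. Since $T \circ \theta = \id$, the map $\theta$ is split and hence pure, with $T$ a retraction satisfying $T(1) = 1$.

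Then I would verify the two compatibilities that feed \autoref{prop.SplitHomomorphismDescend}. First, $\theta(\p_t^{m \times n}) \subset \p_t^{k \times l}$ is immediate, since the generating $(t-1)$-minors of the first $t-1$ rows of $\bm{x}^{(m,n)}$ embed as $(t-1)$-minors of the first $t-1$ rows of $\bm{x}^{(k,l)}$. Second, $T(\p_t^{k \times l}) \subset \p_t^{m \times n}$: a generating $(t-1)$-minor of the first $t-1$ rows of $\bm{x}^{(k,l)}$ either uses only columns $\leq n$ (and then maps to a generator of $\p_t^{m \times n}$) or it uses some column $> n$, in which case $T$ zeroes an entire column of that $(t-1) \times (t-1)$ submatrix and the minor vanishes. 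This is the only genuinely nontrivial point, and it turns critically on the rows chosen to define $\p_t$ lying inside the embedded $m \times n$ block.

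With those compatibilities in place, the positive characteristic conclusion is immediate from \autoref{prop.SplitHomomorphismDescend}: pure $F$-regularity of $(R_t^{k \times l}, P_t^{k \times l})$ descends to pure $F$-regularity of $(R_t^{m \times n}, P_t^{m \times n})$. For $p=0$, the hypothesis yields that $(R_t^{k \times l}, P_t^{k \times l})$ is PLT (in particular of PLT-type), and \cite[Theorem 2.10]{ZhuangDirecSummandsKLTSingularities} applied to the pure map $\theta$ together with the divisor compatibility above produces a $\bQ$-divisor $\Delta$ on $\Spec R_t^{m \times n}$ with coefficients in $[0,1)$ making $(R_t^{m \times n}, P_t^{m \times n} + \Delta)$ PLT, establishing the PLT-type claim. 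The main (mild) obstacle is organizing the pair $(k,l)$ so that $l = k+1$ is simultaneously compatible with $k \geq m$, $l \geq n$, and $k \geq t$, which the uniform choice $k = \max\{m, n-1\}$, $l = k+1$ accomplishes.
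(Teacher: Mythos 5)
Your construction of the split inclusion $\theta\colon R_t^{m\times n}\hookrightarrow R_t^{k\times l}$ with retraction $T$, and the verifications $\theta(\p_t^{m\times n})\subset\p_t^{k\times l}$, $T(\p_t^{k\times l})\subset\p_t^{m\times n}$, are exactly what the paper does (it splits into the cases $m<n-1$, where rows are added, and $m\geq n$, where columns are added; your uniform choice $k=\max\{m,n-1\}$ reproduces precisely these two cases and never mixes them). In particular the positive characteristic part of your argument is complete: \autoref{prop.SplitHomomorphismDescend} needs only $T(1)=1$, $\theta(\p)\subset\q$ and $T(\q)\subset\p$, all of which you check.

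The gap is in the characteristic zero case when columns are added, i.e.\ $m\geq n$, $k=m$, $l=m+1$. To invoke \cite[Theorem 2.10]{ZhuangDirecSummandsKLTSingularities} you need more than the two containments you verify: you need $P_t^{k\times l}$ to be the \emph{cycle-theoretic pullback} of $P_t^{m\times n}$ along $\Spec R_t^{k\times l}\to\Spec R_t^{m\times n}$. When rows are added this is free, because the generators of $\p_t^{m\times n}$ and $\p_t^{k\times l}$ literally coincide and $\p_t^{m\times n}S=\p_t^{k\times l}$. When columns are added, however, $\p_t^{m\times(m+1)}$ acquires new generators (the $(t-1)$-minors of the first $t-1$ rows involving the new columns), so $\p_t^{m\times n}S\subsetneq\p_t^{m\times(m+1)}$ and one must prove two nontrivial facts: that the $\p_t^{m\times(m+1)}$-primary component of $\p_t^{m\times n}S$ is $\p_t^{m\times(m+1)}$ itself (ramification index one, which the paper establishes by exhibiting a common uniformizer $y_{t-1,t-1}$ of $R_{\p}$ and $S_{\q}$ after inverting a $(t-2)$-minor), and that every other minimal prime of $\p_t^{m\times n}S$ has height $\geq 2$ (which the paper gets from the decomposition $\sqrt{\p S}=\q\cap I_{t-1}(\bm{x})S$ via a Nullstellensatz argument). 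Your sentence identifying $T(\q)\subset\p$ as ``the only genuinely nontrivial point'' is therefore not right for $p=0$: without the pullback computation the hypotheses of Zhuang's theorem are not met, and this computation is where most of the work in the paper's proof of this proposition lies.
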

\begin{proof}
We consider two cases: $m < n-1$ and $m \geq n$.

\textbf{Suppose that $m<n-1$:} We may enlarge $\bm{x}$ to an $(n-1)\times n$ matrix of indeterminates $\Tilde{\bm{x}}$ by adding $n-1-m$ generic rows. 
 Let $R_t^{(n-1) \times n} = \kay[\Tilde{\bm{x}}]/I_t(\tilde{\bm{x}})$. Since $I_t(\bm{x})\kay[\tilde{\bm{x}}] \subset I_t(\tilde{\bm{x}})$, we obtain a commutative diagram of $\kay$-algebras
\[
\xymatrix{
R_t^{m \times n} \ar@{^{(}->}[r] \ar[rd]_-{\id} & R_t^{(n-1) \times n} \ar@{->>}[d] \\
& R_t^{m \times n}
}
\]
where the vertical homomorphism is obtained by taking the quotient by the variables added to obtain $\Tilde{\bm{x}}$ from $\bm{x}$. Moreover, the prime ideal $\p_t^{m \times n} \subset R_t^{m \times n}$ extends to the prime ideal $\p_t^{(n-1) \times n} \subset R_t^{(n-1) \times n}$. Therefore, we may just apply \autoref{prop.SplitHomomorphismDescend} and  \cite[Theorem 2.10, Theorem A.1]{ZhuangDirecSummandsKLTSingularities} to conclude.

\textbf{Suppose that $m \geq n$:} We may enlarge $\bm{x}$ to an $m\times (m+1)$ matrix of indeterminates $\Tilde{\bm{x}}$ by adding $m-n+1$ generic columns. As before, letting $R_t^{m \times (m+1)} = \kay[\Tilde{\bm{x}}]/I_t(\tilde{\bm{x}})$ yields a commutative diagram of $\kay$-algebras
\[
\xymatrix{
R_t^{m \times n} \ar@{^{(}->}[r] \ar[rd]_-{\id} & R_t^{ m \times (m+1)} \ar@{->>}[d]^-{\pi} \\
& R_t^{m \times n}
}
\]
where the vertical homomorphism is obtained by annihilating the columns added to obtain $\Tilde{\bm{x}}$ from $\bm{x}$. However, $\p_t^{m\times n} \subset R_t^{m \times n}$ does not extend to $\p_t^{ m \times (m+1)} \subset R_t^{m \times (m+1)}$. Of course, we still have $\p_t^{m\times n} R_t^{m \times (m+1)} \subset \p_t^{ m \times (m+1)}$ and that $\pi$ sends $\p_t^{ m \times (m+1)}$ inside $\p_t^{m\times n}$. Hence, if $p>0$, we may use \autoref{prop.SplitHomomorphismDescend} to conclude that $(R_t^{m \times n}, P_t^{m \times n})$ is purely $F$-regular if so is $\big(R_t^{m \times (m+1)}, P_t^{m \times (m+1)}\big)$. 

If $p=0$, we need to be more careful in order to apply \cite{ZhuangDirecSummandsKLTSingularities}. Namely, we need to show that $\p_t^{ m \times (m+1)}$ is the only primary component of $\p_t^{m\times n} R_t^{m \times (m+1)}$ of height $1$. By height considerations, it is clear that $\p_t^{ m \times (m+1)}$ is a minimal prime of $\p_t^{m\times n} R_t^{m \times (m+1)}$ and in fact it contracts to $\p_t^{m\times n}$ (use $\pi$). Thus, we must show two things: the $\p_t^{ m \times (m+1)}$-primary component of $\p_t^{m\times n} R_t^{m \times (m+1)}$ is $\p_t^{ m \times (m+1)}$ and the other minimal primes of $\p_t^{m\times n} R_t^{m \times (m+1)}$ have height $\geq 2$. For notation ease, in what follows, we let $k > n$ be arbitrary and consider the pure extension $R_t^{m \times n} \subset R_t^{m \times k}$ with retraction $\pi \: R_t^{m \times k} \to R_t^{m \times n}$ given by annihilating the last $k-n$ columns (the case $k=m+1$ is the case discussed above).  Further, set $R \coloneqq R_t^{m \times n}$, $S \coloneqq R_t^{m \times k}$, $\p \coloneqq \p_t^{m \times n}$, and $\q \coloneqq \p_t^{m \times k}$

\begin{claim}
    The $\q$-primary component of $\p S$ is $\q$
\end{claim}
\begin{proof}[Proof of claim]
    The $\q$-primary component of $\p S$ is $(\p S_{\q})\cap S$, which is none other than $\q^{(e)}$ where $e$ is the ramification index of the extension of DVRs $R_{\p} \subset S_{\q}$. Thus, it suffices to show that an uniformizer of $R_{\p}$ is an uniformizer of $S_{\q}$. To this end, let us recall how to find an uniformizer of $R_{\p}$. First, let $\alpha=|\hat{\bm{x}}|$ be the $(t-2)$-minor of $\bm{x}$ where $\hat{\bm{x}}$ is the matrix given by the first $t-2$ rows and columns of $\bm{x}$ (if $t=2$ then $\alpha=1$). Let us invert $\alpha \notin \p$. By performing row operations, we then obtain
\[
\begin{pmatrix}
\hat{\bm{x}} & \bm{v}_{t-1} & \bm{v}_t &\cdots &\bm{v}_{n} \\
\bm{u}_{t-1} &x_{t-1, t-1} &x_{t-1, t} &\cdots &x_{t-1,n} \\
\bm{u}_{t} &x_{t, t-1} &x_{t, t} &\cdots &x_{t,n} \\
\vdots &\vdots &\vdots &\ddots & \vdots \\
\bm{u}_{m} &x_{m,t-1} &x_{m,t} &\cdots &x_{m,n}
\end{pmatrix}
\sim
\begin{pmatrix}
\hat{\bm{x}} & \bm{v}_{t-1} & \bm{v}_{t} &\cdots &\bm{v}_{n} \\
\bm{0} &y_{t-1, t-1} & y_{t-1, t} &\cdots &y_{t-1,n} \\
\bm{0} &y_{t, t-1} &y_{t, t}  &\cdots &y_{t,n} \\
\vdots &\vdots &\vdots &\ddots & \vdots \\
\bm{0} &y_{m,t-1} & y_{m,t} &\cdots &y_{m,n}
\end{pmatrix},
\]
where
\[
y_{i,j} = x_{i,j}-\bm{u}_i\hat{\bm{x}}^{-1} \bm{v}_j, \quad \hat{\bm{x}}^{-1}\coloneqq  \frac{1}{\alpha} \Adj \hat{\bm{x}},  
\]
and $\Adj \hat{\bm{x}}$ is the adjoint matrix of $\hat{\bm{x}}$---the transpose of the matrix of cofactors. The point is that both matrices share the same ideals of minors in $R_{\alpha}$ whenever the first $t-2$ rows are involved. For instance,
\[
\begin{vmatrix}
    \hat{\bm{x}} & \bm{v}_j  \\
\bm{u}_{t-1} &x_{{t-1}, j} 
\end{vmatrix}
= \begin{vmatrix}
    \hat{\bm{x}} & \bm{v}_j  \\
\bm{0} &y_{{t-1}, j} 
\end{vmatrix}
= \alpha y_{{t-1}, j}, \quad \forall j\in\{t-1,\ldots,n\}.
\]
Therefore, $\p R_{\alpha} = (y_{t-1,t-1}, \ldots, y_{t-1,n})$. Likewise,
\[
\alpha \begin{vmatrix}
y_{t-1, i} & y_{t-1, j}  \\
y_{t, i} &y_{t, j} 
\end{vmatrix} =
\begin{vmatrix} \hat{\bm{x}} & \bm{v}_i & \bm{v}_{j}  \\
\bm{0} &y_{{t-1}, i} & y_{t-1,j} \\
\bm{0} &y_{{t}, i} & y_{t,j}
\end{vmatrix} = 0, \quad \forall i,j\in\{t-1,\ldots,n\},
\]
as it is a $t$-minor. Hence,
\[
y_{t-1,i}y_{t,j} - y_{t-1,j}y_{t,i} = 0 \in R_{\alpha}.
\]
Thus, after inverting all the $y_{t,t-1},\ldots,y_{t,n} \notin \p R_{\alpha}$, we get that the $y_{t-1,t-1},\ldots,y_{t-1,n}$ are all multiples of one another. In other words,
\[
\p R_{\alpha y_{t,t-1}\cdots y_{t,n}} = (y_{t-1,j}), \quad \forall j \in \{t-1,\ldots,n\}.
\]
In particular, $y_{t-1,t-1}$ is a uniformizer of $R_{\p}$. Note that this is independent of $n$ and so we can choose a common uniformizer of $R_{\p}$ and $S_{\q}$; as required.
\end{proof}
\begin{claim}
The other minimal primes of $\p S$ have height $\geq 2$. Indeed,
\[
\sqrt{\p S} = \q \cap \big(I_{t-1}(\bm{x})S \big)
\]
and further $I_{t-1}(\bm{x})S \supsetneq I_{t-1}(\bm{x}'')S$ where $\bm{x}''$ is the matrix obtained by taking the first $t-1$ columns of $\bm{x}$ and so of $\tilde{\bm{x}}$. Of course, $I_{t-1}(\bm{x}'')S$ is a prime ideal of height $1$.\footnote{It generates $\Cl R$ and is linearly equivalent to $-\q$.}
\end{claim}
\begin{proof}[Proof of claim]
    It suffices to show the displayed equality of ideals. The inclusion ``$\subset$'' is clear. For the converse, we compare algebraic sets in some algebraic closure $K$ of $\kay$ and use Hilbert's Nullstellensatz. Let $A$ be an $m \times k$ matrix over $K$ of rank $<t$, \ie $A \in \bM_t^{m \times k}(K)$.  Let $u_1, \ldots, u_m \in K^{k}$ be the row vectors of $A$ and, for all $i=1,\ldots,m$, let $u'_i\in K^{n}$ be the row vector obtained by considering the first $n$-entries of $u_i$. Suppose that $A$ is a $K$-point of $V(\sqrt{\p S})$. That is, $\dim_K \langle u'_1, \ldots, u'_{t-1} \rangle_K < t-1 $. Suppose that $A$ is not a $K$-point of the variety cut out by $\q$. That is, the vectors $u_1, \ldots, u_{t-1} \in K^{k}$ are linearly independent and so the rank of $A$ is $t-1$. This means that $u_i\in \langle u_1, \ldots, u_{t-1} \rangle_K$ for all $i \in \{t,\ldots,m\}$. Then, the same holds with $u'$'s instead of $u$'s. Hence, $\dim_K \langle u'_1, \ldots, u'_{m} \rangle_K < t-1 $.  In other words, the rank of the matrix obtained by taking the first $n$ columns of $A$ has rank $<t-1$; as required.
\end{proof}
This proves \autoref{pro.reductiontoLoggorCase}.
\end{proof}
\begin{proposition} \label{pro.ReductiontoPosChar}
    If \autoref{thm.MainTheorem} holds for all $p>0$ then it holds for $p=0$.
\end{proposition}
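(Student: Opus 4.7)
The plan is to combine the reduction to the log Gorenstein case already established in \autoref{pro.reductiontoLoggorCase} with the standard reduction-modulo-$p$ correspondence between PLT singularities and purely $F$-regular type. First, by \autoref{pro.reductiontoLoggorCase}, it suffices to prove the two characteristic-zero statements assuming $m=n-1$, so that $(R_t^{m\times n},P_t^{m\times n})$ is log Gorenstein. In this case, once we know PLT, the PLT-type statement follows immediately by taking $\Delta=0$. So the whole reduction collapses to: assuming the positive-characteristic statement holds, prove that the log Gorenstein pair is PLT over a field of characteristic zero.

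Next, the key observation is that the pair $\big(R_t^{m\times(m+1)},P_t^{m\times(m+1)}\big)$ is defined over $\bZ$: the defining ideals $I_t(\bm{x})$ and $I_{t-1}(\bm{x}')$ of the ambient ring and of the divisor are generated by minors of a matrix of indeterminates, with integer coefficients. Consequently, spreading out $(R_t,P_t)$ over a finitely generated $\bZ$-subalgebra $A\subset \kay$ and reducing modulo closed points $\mu\in \Spec A$ produces precisely the positive-characteristic determinantal pairs $(R_t^{m\times(m+1)},P_t^{m\times(m+1)})$ over the residue field $\kay(\mu)$, whose characteristic is the residue characteristic. The primality and heights of $I_t(\bm{x})$ and $I_{t-1}(\bm{x}')+I_t(\bm{x})$ are preserved by the fundamental theorems of \cite{BrunsVetterDeterminantalRings}, so the pure-pair structure spreads out compatibly.

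By the assumed positive-characteristic statement, each such mod-$p$ reduction is purely $F$-regular. In particular the family $(R_t^{m\times(m+1)},P_t^{m\times(m+1)})$ has purely $F$-regular fibers over every closed point of $\Spec A$, and hence is trivially of purely $F$-regular type. By \cite[Theorem 4.6]{TakagiWatanabeFsingularities} (see also \cite{TakagiPLTAdjoint}), for a log Gorenstein pure log pair in characteristic zero, being of purely $F$-regular type is equivalent to being PLT. This yields the desired PLT statement, completing the reduction.

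The only delicate point is ensuring the spreading-out is compatible with both the normality/Cohen--Macaulay structure of $R_t^{m\times(m+1)}$ and the prime divisor structure of $P_t^{m\times(m+1)}$, so that one is genuinely comparing the same pair in all characteristics. This, however, is essentially automatic from the universal presentation of generic determinantal rings over $\bZ$ and the fact that the heights and primality established in \cite[\S 8]{BrunsVetterDeterminantalRings} are characteristic-independent.
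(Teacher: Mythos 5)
Your proposal is correct and follows the same route as the paper: reduce to the log Gorenstein case $m=n-1$ via \autoref{pro.reductiontoLoggorCase}, observe that a characteristic-zero determinantal pair spreads out (over $\bZ$) to positive-characteristic determinantal pairs, and conclude PLT from pure $F$-regular type by \cite[Theorem 4.6]{TakagiWatanabeFsingularities}. The paper's proof is just a terser version of the same argument.
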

\begin{proof}
    By \autoref{pro.reductiontoLoggorCase}, in proving \autoref{thm.MainTheorem} for $p=0$, we may reduce to the log Gorenstein case. The log Gorenstein case can be obtained by spreading out to positive characteristics by \cite[Theorem 4.6]{TakagiWatanabeFsingularities}. However, a determinantal pair of characteristic zero spreads out to determinantal pairs of positive characteristics. 
\end{proof}

We are going to need the following auxiliary pure pairs.
\begin{setup} \label{setup.AuxiliaryPairs}
Let us fix positive integers $s,t,m \geq 1$ such that $m \geq s,t$ and $s \geq t-1$. Set $n \coloneqq m+1$ and $k\coloneqq s-(t-1)\geq 0$.  Then consider an $m \times (n+k)$ matrix of variables
\[
\bm{y} \coloneqq \begin{pmatrix}
    \bm{x}' & \bm{z} \\
    \bm{w} & \bm{0}
\end{pmatrix}, \mbox{ where }
\bm{x} \coloneqq \begin{pmatrix}
    \bm{x}'   \\
    \bm{w} 
\end{pmatrix} 
\]
has size $m \times n$, $\bm{x}'$ has size $s \times n$ and so $\bm{z}$ has size $s \times k$ and $\bm{w}$ has size $(m-s) \times n$. Let us define:
\[
T \coloneqq \kay[\bm{y}]=S[\bm{z}], \quad R \coloneqq T/I_t(\bm{x}) = R_t^{m \times n} [\bm{z}], \quad \p \coloneqq I_s \begin{pmatrix}
    \bm{x}' & \bm{z}
\end{pmatrix}.
\]
Observe that $R$ is a normal integral domain. Note that the case $k=0$ is the case of determinantal pure pairs. However, the remaining cases $k=1,\ldots, m$ can be obtained from cases $k=0$ by successive localizations along entries of $\bm{w}$. We make this precise in the proof of the following proposition establishing that $P \colonequals V(\p) \subset \Spec R$ is a prime divisor.
\end{setup}

\begin{proposition} \label{prop.Height1}
    Work in \autoref{setup.AuxiliaryPairs}. The ideal $\p \subset R$ is a prime ideal of height $1$.
\end{proposition}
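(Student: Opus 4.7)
The plan is to argue by cases on $k$. When $k=0$ we have $s=t-1$ and $\p = I_{t-1}(\bm{x}')R^{m\times n}_t = \p_t^{m\times n}$, so the result reduces to the classical fact that $\p_t^{m\times n}$ is a height-$1$ prime generator of $\Cl R^{m\times n}_t$, namely \cite[Corollary 8.4]{BrunsVetterDeterminantalRings}. Thus the substance of the proposition is the case $k\geq 1$, and my plan is to realize $(R,\p)$ (after a faithfully flat polynomial/Laurent extension) as a localization of a suitable $k=0$ instance of the same construction.

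Concretely, the plan is to set $t_0 \coloneqq t+k$, $m_0 \coloneqq m+k$, $n_0 \coloneqq m_0+1$, $s_0 \coloneqq s = t_0-1$, and consider the $k=0$ pair $(R_0,\p_0) \coloneqq \big(R^{m_0\times n_0}_{t_0},\, I_{s_0}(\bm{x}'_0)\big)$, where $\bm{x}_0$ is the corresponding generic $m_0 \times n_0$ matrix and $\bm{x}'_0$ its first $s_0$ rows. The number of rows of $\bm{w}_0$ (the last $m_0-s_0 = m-t+1$ rows of $\bm{x}_0$) is at least $k$, so I may choose $f$ to be the $k$-minor of $\bm{x}_0$ at rows $s_0+1,\dots,s_0+k$ and columns $1,\dots,k$. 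The first step is to check $f\notin \p_0$ by exhibiting a point of $V(\p_0)$ at which $f$ evaluates to $1$ (take a matrix with first $s_0$ rows zero and with a $k\times k$ identity block inserted in the $\bm{w}_0$-part). Next, I iterate $k$ times the standard Schur-complement change of variables: inverting the current pivot $w_j$ replaces the current defining matrix by a block $\diag(w_j)\oplus \bm{y}^{(j)}$ of size $1\oplus(m_0-j)\times(n_0-j)$ in fresh variables, so that after $k$ steps the ambient polynomial ring becomes a Laurent polynomial extension of $\kay[\bm{y}^{(k)}]$ in $w_1,\ldots,w_k$ and additional "$u,v$" variables, and $I_{t_0}(\bm{x}_0) \mapsto I_{t_0-k}(\bm{y}^{(k)}) = I_t(\bm{y}^{(k)})$. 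Performing $k$ rounds of column operations on $\bm{x}'_0$ (using the pivot columns introduced at each stage) transforms $\bm{x}'_0$ into the concatenation $\bigl(\bm{u}^{(1)}_0 \mid \cdots \mid \bm{u}^{(k)}_0 \mid \bm{y}''^{(k)}\bigr)$, where each $\bm{u}^{(j)}_0$ is an $s\times 1$ column of fresh variables and $\bm{y}''^{(k)}$ is the first $s$ rows of $\bm{y}^{(k)}$. Identifying $\bm{y}^{(k)}$ with our $\bm{x}$ (so $\bm{y}''^{(k)} = \bm{x}'$) and $\bigl(\bm{u}^{(1)}_0,\dots,\bm{u}^{(k)}_0\bigr)$ with $\bm{z}$, this yields an isomorphism
\[
R_0[f^{-1}] \;\cong\; R \otimes_\kay E
\]
for a suitable (Laurent-extended) polynomial $\kay$-algebra $E$, under which $\p_0 R_0[f^{-1}]$ corresponds to $\p \otimes_\kay E$.

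With the isomorphism in place, the conclusion is routine faithfully flat descent: since $\p_0$ is a height-$1$ prime in $R_0$ (case $k=0$) and $f\notin \p_0$, the localization $\p_0 R_0[f^{-1}]$ is a height-$1$ prime, so $\p\cdot (R\otimes_\kay E)$ is a height-$1$ prime; but the map $R \hookrightarrow R\otimes_\kay E$ is a polynomial extension followed by a localization at elements outside $\p$, hence faithfully flat with $\p$-saturation, which forces $\p\subset R$ to be a height-$1$ prime as well. The main obstacle is bookkeeping the iterated change of variables cleanly: one must verify at each step that (i) the column operations used to simplify $\bm{x}'_0$ are legitimate over $R_0[f^{-1}]$ and therefore preserve the ideal of $s_0$-minors, and (ii) the "free columns" successively split off accumulate into an $s\times k$ block of algebraically independent variables that can be identified with $\bm{z}$. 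Once this is carefully checked at one step, the iteration is a straightforward induction on $k$.
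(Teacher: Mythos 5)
Your proposal is correct and is essentially the paper's own argument: the paper proves this proposition by exactly one step of your Schur-complement reduction (inverting a single entry of $\bm{w}$ identifies the case $(m,t,s)$, up to Laurent-polynomial variables, with the case $(m-1,t-1,s)$, so every instance with $k\geq 1$ is a localization of an instance with $k-1$, the base case $k=0$ being Bruns--Vetter), run as an induction on $k$ rather than as $k$ steps performed at once. The only bookkeeping caveat is that eliminating one pivot at a time localizes at the nested leading minors $d_1,\dots,d_k$ of the pivot block rather than only at $f=d_k$ (alternatively, a single block Schur complement needs only $f$ inverted); either way the inverted elements lie outside $\p_0$ by your witness point, so the descent is unaffected.
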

\begin{proof}
Let $w$ be an entry of $\bm{w}$, say $w\coloneqq w_{m-s,n}=x_{m,n}$. After inverting $w$, we have:
\[
\bm{y} = \begin{pmatrix}
    \bm{x}'' & \bm{v} & \bm{z} \\
    \bm{w}' & \bm{c} & \bm{0} \\
    \bm{r} & w  & \bm{0}
\end{pmatrix}
\sim 
\begin{pmatrix}
    \tilde{\bm{x}}' \coloneqq \bm{x}'' - w^{-1} \bm{v}\bm{r} & \bm{0} & \bm{z} \\
    \tilde{\bm{w}} \coloneqq \bm{w}'- w^{-1} \bm{c}\bm{r} & \bm{0} & \bm{0} \\
    \bm{0} & w  & \bm{0}
\end{pmatrix},
\quad \tilde{\bm{x}} \coloneqq \begin{pmatrix}
    \tilde{\bm{x}}'   \\
    \tilde{\bm{w}}
\end{pmatrix}
\]
and
\[
I_t(\bm{x})T_w= I_{t-1}(\tilde{\bm{x}})T_w, \quad I_s \begin{pmatrix}
    \bm{x}' & \bm{z}
\end{pmatrix} T_w= I_s \begin{pmatrix}
    \tilde{\bm{x}}' & {\bm{v}} & \bm{z}
\end{pmatrix} T_w.
\]
Hence, letting 
\[
\tilde{\bm{z}} \coloneqq \begin{pmatrix}
     \bm{v} & \bm{z}
\end{pmatrix},
\quad \tilde{\bm{y}} \coloneqq \begin{pmatrix}
    \tilde{\bm{x}}' & \tilde{\bm{z}} \\
    \tilde{\bm{w}} & \bm{0}
\end{pmatrix}
\] yields:
\[
R_w = \left(\frac{\kay[\tilde{\bm{y}}]}{I_{t-1}(\tilde{\bm{x}}) } \right) \big[w, w^{-1}, \bm{r}, \bm{c}\big] = \tilde{R}\big[w, w^{-1}, \bm{r}, \bm{c}\big]
\]
and 
\[
\p R_w = \big(I_s \begin{pmatrix}
    \tilde{\bm{x}}' & \tilde{\bm{z}}
\end{pmatrix}  \tilde{R} \big)\big[w, w^{-1}, \bm{r}, \bm{c}\big] = \tilde{\p}\big[w, w^{-1}, \bm{r}, \bm{c}\big].
\]
Observe that $(\tilde{R}, \tilde{\p})$ is none other than the case $(m-1,t-1,s)$ and so we have increased the value of $k$ by $1$. Thus, by induction on $k$, we conclude that $\height \p = 1$. 
\end{proof}
In what follows, we are going to study the pure pairs $(R,P)$ where $P = V(\p) \subset \Spec R$. First, we show that they are log Gorenstein as $n=m+1$. This is obtained by induction on $k$ and localizing along entries of $\bm{z}$.
\begin{proposition} \label{pro.LogGorensteinVerification}
Work in \autoref{setup.AuxiliaryPairs}. Then, $-P$ is a canonical divisor on $R$.    
\end{proposition}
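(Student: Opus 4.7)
The plan is to argue by induction on $k$. The base case $k=0$ forces $s=t-1$, so $\bm{z}$ is empty, $R=R_t^{m\times n}$, and $\p=I_{t-1}(\bm{x}')=\p_t^{m\times n}$. In this situation, the equality $K_R=(m-n)P_t^{m\times n}=-P_t^{m\times n}$ (using $n=m+1$) is the classical canonical divisor formula for determinantal rings from Bruns--Vetter, which was already recalled at the start of this section.

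For the inductive step ($k\geq 1$), the plan is to localize at the entry $z_{s,k}\in\bm{z}$ and reduce to the $(m,t,s-1,k-1)$ case. Performing elementary row operations on rows $1,\ldots,s-1$ of $\bm{y}$ (row $i$ minus $(z_{i,k}/z_{s,k})$ times row $s$), the last column of $\bm{y}$ becomes zero except for $z_{s,k}$ in position $(s,k)$. These operations do not affect $\bm{w}$ or row $s$, so they transform $I_t(\bm{x})$ into $I_t(\tilde{\bm{x}})$ for a new $m\times n$ matrix of variables $\tilde{\bm{x}}$, and Laplace expansion along the modified last column yields
\[
\p R_{z_{s,k}} = I_{s-1}\bigl(\tilde{\bm{x}}'_{(1,\ldots,s-1)} \,\big|\, \tilde{\bm{z}}''\bigr)\cdot R_{z_{s,k}},
\]
where $\tilde{\bm{z}}''$ is the modified $(s-1)\times(k-1)$ block. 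Just as in \autoref{prop.Height1}, this identifies $R_{z_{s,k}}$ as a polynomial extension (with $z_{s,k}$ inverted) of the ring $R'$ corresponding to \autoref{setup.AuxiliaryPairs} with parameters $(m,t,s-1,k-1)$; the extra variables are the entries of $\bm{z}$ that lie outside the determinantal relations and are thus free. By induction, $-P'$ is a canonical divisor on $R'$, and since canonical modules commute with polynomial extensions and localization, $-P\cdot R_{z_{s,k}}=-P'\cdot R_{z_{s,k}}$ is a canonical divisor on $R_{z_{s,k}}$.

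To descend the isomorphism $(\omega_R)_{z_{s,k}}\cong (R(-P))_{z_{s,k}}$ back to $R$, observe that $z_{s,k}$ is prime in $R$ because $R/(z_{s,k})=R_t^{m\times n}[\bm{z}\setminus\{z_{s,k}\}]$ is a normal integral domain. Hence $V(z_{s,k})$ is a principal divisor, the kernel of $\Cl R\to \Cl R_{z_{s,k}}$ is trivial, and the map is injective. Therefore the equality of classes $[\omega_R]=[R(-P)]$ holds in $\Cl R$, giving $\omega_R\cong R(-P)$, that is, $-P$ is a canonical divisor on $R$. The only nontrivial step is the change-of-variables identifying $R_{z_{s,k}}$ with a polynomial extension of the smaller setup ring; this is the exact analogue of the row-operation manipulation already carried out in the proof of \autoref{prop.Height1}, so its main purpose here is to recognize $\p R_{z_{s,k}}$ as the pullback of the prime ideal from the smaller pair, everything else reducing to formal properties of $\Cl R$ and $\omega_R$.
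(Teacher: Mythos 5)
Your proof is correct and follows essentially the same route as the paper: induction on $k$ with the determinantal case $k=0$ as base, localization at one entry of $\bm{z}$, and row operations reducing to the setup with $(s,k)$ replaced by $(s-1,k-1)$. The one (harmless) divergence is how you descend from $R_{z_{s,k}}$ back to $R$ — you use that $(z_{s,k})$ is a principal prime so $\Cl R \to \Cl R_{z_{s,k}}$ is injective, whereas the paper removes the codimension-$\geq 2$ locus $V((\bm{z})R)$ (and for that reason must separately dispose of the degenerate case $s=1$, where $t=1$, $k=1$ and the claim is trivial — a case your induction silently passes through but which is worth a sentence).
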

\begin{proof}
We proceed by induction on $k$. The case $k=0$ is the determinantal case. Assume $k\geq 1$. Observe that $\p \subset (\bm{z})R$, \ie $\p$ is contained in the prime ideal generated by the entries of $\bm{z}$. If $s=1$, then $t=1$ and $k=1$. In that case, $(R,\p)=\big(\kay[z],(z)\big)$, where the proposition is trivial. Thus, we may assume that $s \geq 2$. In that case, $\p \subsetneq (\bm{z})R$ and so $\height (\bm{z})R \geq 2$. Since the statement can be proved in codimension $1$, we may prove it away from the closed subset cut out by $(\bm{z})R$ and so after inverting each entry of $\bm{z}$. Without loss of generality, we may localize along $z\coloneqq z_{1,k}$. After inverting $z$, we obtain:
\[
\bm{y} = \begin{pmatrix}
    \bm{u} & \bm{r} & z \\
    \bm{x}'' & \bm{z}' & \bm{c} \\
    \bm{w} & \bm{0}  & \bm{0}
\end{pmatrix}
\sim 
\begin{pmatrix}
    \bm{0} & \bm{0} & z \\
   \tilde{\bm{x}}' \coloneqq \bm{x}'' - z^{-1} \bm{c}\bm{u}  &   \tilde{\bm{z}} \coloneqq \bm{z}' - z^{-1} \bm{c}\bm{r} & \bm{0} \\
    \bm{w} & \bm{0}  & \bm{0}
\end{pmatrix},
\]
and so we set
\[
\tilde{\bm{x}} \coloneqq \begin{pmatrix}
    \bm{u} \\
    \tilde{\bm{x}}'   \\
    \bm{w} 
\end{pmatrix},
\quad \tilde{\bm{y}} \coloneqq \begin{pmatrix}
    \bm{u} & \bm{0} \\
    \tilde{\bm{x}}' & \tilde{\bm{z}}  \\
    \bm{w} & \bm{0} 
\end{pmatrix}.
\]
Then,
\[
I_t(\bm{x})T_z = I_{t}(\tilde{\bm{x}})T_z, \quad I_s \begin{pmatrix}
    \bm{x}' & \bm{z}
\end{pmatrix} T_z = I_{s-1} \begin{pmatrix}
    \tilde{\bm{x}}' & \tilde{\bm{z}}
\end{pmatrix} T_z. 
\]
In this way,
\[
R_z = \big( \kay[\tilde{\bm{y}}]/I_t(\tilde{\bm{x}}) \big)\big[ z, z^{-1}, \bm{r}, \bm{c}\big] = \tilde{R}\big[ z, z^{-1}, \bm{r}, \bm{c}\big]
\]
and
\[
\p R_z = \big( I_{s-1} \begin{pmatrix}
    \tilde{\bm{x}}' & \tilde{\bm{z}}
\end{pmatrix} \tilde{R}\big) \big[ z, z^{-1}, \bm{r}, \bm{c}\big]
= \tilde{\p}\big[ z, z^{-1}, \bm{r}, \bm{c}\big] 
\]
where $(\tilde{R}, \tilde{\p})$ is the case $(m-1,t,s-1)$ and so we have decreased $k$ by $1$. The result then follows by induction.
\end{proof}
In this way, we have verified that $(R,P)$ satisfies the setup of \autoref{cor.CheckingForFpurity}. We show next that it also satisfies conditions (b) and (c). In other words, the pure $F$-regularity of $(R,P)$ is equivalent to condition (a) in \autoref{cor.CheckingForFpurity} and so to any of the other conditions in \autoref{rem.SettingVariablesEqualTo1}. Say, we may check for pure $F$-regularity after setting $y_{i,j}=1$ for every entry of $\bm{y}$ such that $y_{i,j} \notin \p$.
\begin{proposition} \label{pro.SimplificationInductivehypothesis}
    Work in \autoref{setup.AuxiliaryPairs} and suppose that $\Char \kay = p>0$. Then, conditions (b) and (c) in \autoref{cor.CheckingForFpurity} hold for $(R, P)$. 
\end{proposition}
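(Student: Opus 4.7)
My plan is to verify conditions (b) and (c) of \autoref{cor.CheckingForFpurity} for $(R, P)$ by descending from the determinantal case $k = 0$. For condition (b) concerning $R$, the argument is immediate: $R = R_t^{m \times n}[\bm{z}]$ is a polynomial extension of $R_t^{m \times n}$, which is $F$-regular by the Hochster--Huneke theorem, so $R$ is $F$-regular since this property is preserved under polynomial extensions.

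For condition (c), the $F$-purity of $(R, P)$, I will descend from the determinantal case. When $k = 0$, the pair coincides with the determinantal pair $(R_t^{m \times n}, P_t^{m \times n})$, whose $F$-purity is supplied by \autoref{thm.FpurityDeterminantalPairs}. For $k \geq 1$, running the localization step of \autoref{prop.Height1} backwards $k$ times shows that $(R, P)$ arises as a retract of a localization of the larger determinantal pair $(R_{t+k}^{(m+k) \times (n+k)}, P_{t+k}^{(m+k) \times (n+k)})$ at an appropriate sequence of $k$ entries of its $\bm{w}$-matrix, after setting the auxiliary free variables to trivial values. Since $F$-purity is preserved under localization and along split subring inclusions (by a formal analogue of \autoref{prop.SplitHomomorphismDescend}), $F$-purity of $(R, P)$ follows.

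For the $F$-regularity of $R/\p$ in condition (b), I will use a parallel descent: the same localization-plus-retraction procedure shows that it suffices to establish $F$-regularity of $R_{t+k}^{(m+k) \times (n+k)}/\p_{t+k}^{(m+k) \times (n+k)}$ in the determinantal case. To establish the latter, the plan is to identify $R_t^{m \times n}/\p_t^{m \times n}$ explicitly with a polynomial extension of a smaller determinantal-type ring. After inverting a suitable $(t-2)$-minor of $\bm{x}'$ and performing row operations as in the Claim within the proof of \autoref{pro.reductiontoLoggorCase}, the quotient takes this form on a dense open subset; globally it belongs to the class of (one-sided) ladder determinantal rings, so $F$-regularity follows from Hochster--Huneke together with the known $F$-regularity of ladder determinantal rings.

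The main obstacle will be this last identification: confirming $F$-regularity of $R_t^{m \times n}/\p_t^{m \times n}$ globally, particularly at the irrelevant maximal ideal where all the simplifying minors vanish. Although the ring is known to be a normal Cohen--Macaulay domain of the expected dimension, the global verification will require either an explicit Frobenius-splitting argument extending Seccia's construction to the sum $I_t(\bm{x}) + I_{t-1}(\bm{x}')$ (which is already $\phi$-compatible by \autoref{prop.BasicsCompatibleIdeals}), or an appeal to the existing literature on $F$-regularity of ladder determinantal varieties.
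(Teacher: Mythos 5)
Your verification that $R$ itself is $F$-regular and your treatment of condition (c) essentially reproduce the paper's argument: for (c) the paper likewise descends from the $k=0$ determinantal case of \autoref{thm.FpurityDeterminantalPairs}, using an enlargement $T\hookrightarrow\tilde T$ followed by the retraction $\tilde T\to\tilde T/(\bm{r},\bm{c},w-1)\cong T$, which is the same mechanism as your localize-then-retract loop built on \autoref{prop.Height1}; that part of your proposal is fine.

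The genuine gap is the $F$-regularity of $R/\p$. Your reduction to the $k=0$ case is sound ($F$-regularity passes to localizations and to module-split retracts), but you then leave that case --- the $F$-regularity of $R_t^{m\times n}/\p_t^{m\times n}=S/\big(I_t(\bm{x})+I_{t-1}(\bm{x}')\big)$ --- unproven, offering two possible completions without executing either. Moreover, the first of them cannot work as stated: knowing that $I_t(\bm{x})+I_{t-1}(\bm{x}')$ is compatible with Seccia's splitting $\phi$ (which indeed follows from \autoref{prop.BasicsCompatibleIdeals} and \autoref{thm.MainTheoremDeterminants}) only shows that the quotient is $F$-\emph{pure}, and $F$-purity is strictly weaker than the $F$-regularity demanded by condition (b) of \autoref{cor.CheckingForFpurity}; a compatible splitting alone can never certify $F$-regularity. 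The open-dense identification after inverting a $(t-2)$-minor is likewise of no help, since $F$-regularity is not a generic condition and must be verified at the irrelevant maximal ideal, as you yourself note. The second route --- identifying the quotient with a mixed ladder determinantal ring (a ring $R(X;\delta)$ cogenerated by a minor in the sense of Bruns--Vetter, \S 5) and citing $F$-regularity of such rings --- is viable, but you supply neither the identification nor a reference, and you explicitly label it the ``main obstacle.'' By contrast, the paper disposes of this point with a short direct structural identification of $R/\p$ in terms of determinantal rings of maximal minors, with no appeal to the ladder literature. As it stands, the half of condition (b) concerning $R/\p$ is asserted rather than proved.
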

\begin{proof}
Observe that $R$ is $F$-regular as it is a polynomial algebra over an $F$-regular ring. To see why $R/\p$ is $F$-regular, note that
\[
R/\p = R \otimes_{\kay} T/ I_s \begin{pmatrix}
    \bm{x}' & \bm{z} 
\end{pmatrix},
\]
and $ T/ I_s \begin{pmatrix}
    \bm{x}' & \bm{z} 
\end{pmatrix}$ is $F$-regular by the same reason $R$ is. This checks (b).

We verify (c) next. The case $k=0$ was done in \autoref{thm.FpurityDeterminantalPairs}. In fact, if $k=0$, there is $\phi \: F_* T \to T$ such that $\phi(F_*1)=1$ and $I_t(\bm{x}), I_s \begin{pmatrix}
    \bm{x}' & \bm{z} 
\end{pmatrix} \in I(T,\phi)$. We prove the same holds for all $k$ by induction on $k$. Just as in the proof of \autoref{prop.Height1}, we may enlarge $\bm{y}$ to a matrix
\[
\tilde{\bm{y}} \coloneqq \begin{pmatrix}
    \bm{x}' & \bm{v} & \bm{z}' \\
    \bm{w} & \bm{c} & \bm{0} \\
    \bm{r} & w & \bm{0}
\end{pmatrix}
\]
where $\bm{v}$ is the first column of $\bm{z}$. That is, $\bm{y}$ is obtained from $\tilde{\bm{y}}$ by deleting the last row and setting $\bm{c}=0$. By induction, setting $\tilde{T} \coloneqq \kay[\tilde{\bm{y}}]$, there is $\tilde{\phi} \: F_* \tilde{T} \to \tilde{T}$ such that $\tilde{\phi}(F_*1)=1$ and
\[
I_{t+1}\begin{pmatrix}
    \bm{x}' & \bm{v} \\
    \bm{w} & \bm{c} \\
    \bm{r} & w
\end{pmatrix}, I_s \begin{pmatrix}
    \bm{x}' & \bm{z} 
\end{pmatrix} \in I\big(\tilde{T},\tilde{\phi}\big)
\]
as this decreases $k$ by $1$. Now, let $\phi \: F_* T \to T$ be the composition of $R$-linear maps
\[
\phi \: F_* T \to F_* \tilde{T} \xrightarrow{\tilde{\phi}} \tilde{T} \twoheadrightarrow \tilde{T}/(\bm{r},\bm{c},w-1) \cong T.
\]
As in \autoref{prop.SplitHomomorphismDescend}, one readily verifies that this map has the required properties as
\[
I_{t+1}\begin{pmatrix}
    \bm{x}' & \bm{v} \\
    \bm{w} & \bm{c} \\
    \bm{r} & w
\end{pmatrix} = I_{t}(\bm{x}) \bmod (\bm{r},\bm{c},w-1).
\]
\end{proof}

By \autoref{pro.ReductiontoPosChar} and \autoref{pro.reductiontoLoggorCase}, \autoref{thm.MainTheorem} follows from the following proposition.
\begin{proposition} \label{pro.mainProposition}
    Work in \autoref{setup.AuxiliaryPairs} and suppose that $\Char \kay = p>0$. Then, 
    $(R,P)$ is purely $F$-regular.
\end{proposition}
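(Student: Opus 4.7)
The plan is to prove the proposition by induction on the integer $m+s$, using \autoref{cor.CheckingForFpurity} as the main reduction tool. For the base case $m+s=2$, the constraints of \autoref{setup.AuxiliaryPairs} force $m=s=t=1$ and $k=1$, so $(R,P)=(\kay[z],(z))$ and pure $F$-regularity is trivial.

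For the inductive step, I would first dispose of the degenerate case $t=1$ directly: here $I_t(\bm{x})=(\bm{x})$ kills all entries of $\bm{x}$, so $R=\kay[\bm{z}]$ is a polynomial ring in $\bm{z}$, which is a generic $s\times s$ matrix (since $k=s$), and $\p=(\det\bm{z})$. Because the ambient ring is regular and $R/\p\cong R_s^{s\times s}$ is an $F$-regular determinantal ring, \autoref{rem.RegularAmbient} yields pure $F$-regularity at once.

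When $t\geq 2$, \autoref{pro.SimplificationInductivehypothesis} already supplies conditions (b) and (c) of \autoref{cor.CheckingForFpurity}, so it only remains to verify condition (a). By \autoref{rem.SettingVariablesEqualTo1}, this amounts to showing that $(R_{(y)},P_{(y)})$ is purely $F$-regular for every entry $y$ of $\bm{y}$ with $y\notin\p$. The entries outside $\p$ fall into three families, to each of which I would apply a block row/column clearing on $\bm{y}$ using the chosen entry as a pivot: the $\bm{w}$-case is precisely the content of \autoref{prop.Height1}, the $\bm{z}$-case that of \autoref{pro.LogGorensteinVerification}, and the $\bm{x}'$-case follows the same pattern, pivoting at an entry of $\bm{x}'$ and performing the analogous Schur-complement clearing of its row and column. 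In each case, $R_y$ is identified with a polynomial-and-localization extension of a ring $\tilde{R}$ arising from \autoref{setup.AuxiliaryPairs} with new parameters: $(m-1,t-1,s,k+1)$ for $y\in\bm{w}$, $(m-1,t-1,s-1,k)$ for $y\in\bm{x}'$, and $(m,t,s-1,k-1)$ for $y\in\bm{z}$.

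A routine check confirms that the new parameters satisfy the constraints of \autoref{setup.AuxiliaryPairs}, that the reduced $t'=t-1$ of value $1$ lands in the $t=1$ direct case, and that $m'+s'<m+s$ in all three cases. The inductive hypothesis then yields pure $F$-regularity of the new pair $(\tilde{R},\tilde{P})$, and \autoref{add_variable} together with \autoref{prop.LocalizationAndRestriction} propagates this up through the polynomial extension and localization to $(R_y,P_y)$, hence to $(R_{(y)},P_{(y)})$. The main technical burden is the $\bm{x}'$-case, which is not recorded explicitly in the paper; however, it is a direct analogue of the other two computations, requiring only careful index bookkeeping rather than any new idea, so I expect no conceptual obstacle.
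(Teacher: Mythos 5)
Your proposal is correct and follows essentially the same route as the paper: the same reduction via \autoref{cor.CheckingForFpurity}, \autoref{rem.SettingVariablesEqualTo1}, and \autoref{pro.SimplificationInductivehypothesis}, the same direct treatment of $t=1$, and the same three Schur-complement localizations with the same resulting parameters (including the $\bm{x}'$-case, which the paper does in fact record explicitly in the proof). The only difference is bookkeeping: you run a single induction on $m+s$, which all three reductions strictly decrease, whereas the paper nests an induction on $t$ inside one on $s$; both are valid.
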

\begin{proof}
We proceed by induction on $s \geq 1$ and treat the base case next.
    
\begin{proof}[Proof of the base case $s=1$]
Since $s \geq t-1$, either $t=1$ or $t=2$. In the former case, $(R,\p) = (\kay[z],(z))$, which is clearly purely $F$-regular (see \autoref{rem.RegularAmbient}). Suppose now that $t=2$. By \autoref{pro.SimplificationInductivehypothesis}, it suffices to show that $(R,P)$ is purely $F$-regular after inverting each $y_{i,j} \notin \p$. Since $t=2$, $y_{i,j} \notin \p$ implies that $y_{i,j}$ is an entry of $\bm{w}$. In the proof of \autoref{prop.Height1}, we saw how, by inverting an entry of $\bm{w}$, we may apply \autoref{add_variable} and \autoref{test_ideal_localization} to reduce the case $(m,t=2,s=1)$ to the case $(m-1,t=1,s=1)$; and so we are done.  
\end{proof}

\begin{proof}[Proof of the inductive step] Let us assume that $s \geq 2$ and that $(R,P)$ is purely $F$-regular for all smaller values of $s$. To prove that $(R,P)$ is purely $F$-regular in this case, we proceed by induction on $t \geq 1$. The base case $t=1$ is the case $(R,\p)=(\kay[\bm{z}], (|\bm{z}|))$ where $\bm{z}$ has size $s \times s$. This case follows from \autoref{rem.RegularAmbient} as $\kay[\bm{z}]/(|\bm{z}|)$ is $F$-regular. Thus, we may further assume that $t \geq 2$ and the pure $F$-regularity of $(R,P)$ for all smaller values of $t$ as well.

Now, by \autoref{pro.SimplificationInductivehypothesis}, it suffices to show that $(R,P)$ is purely $F$-regular after inverting (equivalently, setting equal to $1$) each nonzero entry of $\bm{y}$. There are three cases depending on whether such entry belongs to either $\bm{x}'$, $\bm{w}$, or $\bm{z}$. 

Suppose that we invert an entry of $\bm{w}$ (if any). By the computation in the proof of \autoref{prop.Height1}, we may apply \autoref{add_variable} and \autoref{test_ideal_localization} to obtain the reduction $(m,t,s) \rightsquigarrow (m-1,t-1,s)$, and so we are done by the inductive hypothesis on $t$.

Suppose now that we invert an entry of $\bm{z}$ (if any). As in the proof of \autoref{pro.LogGorensteinVerification}, we may apply \autoref{add_variable} and \autoref{test_ideal_localization} to obtain the reduction $(m,t,s) \rightsquigarrow (m-1,t,s-1)$, and so we are done by the inductive hypothesis on $s$.

Finally, we invert an entry of $\bm{x}'$. Equivalently, set $x\coloneqq x_{1,1}=1$. Then,
\[
\bm{y}=\begin{pmatrix}
1 & \bm{r} & \bm{u} \\
\bm{c} & \bm{x}'' & \bm{z}'\\
\bm{v} & \bm{w}' & \bm{0}
\end{pmatrix}
\sim
\begin{pmatrix}
1 & \bm{0} & \bm{0} \\
\bm{0} & \tilde{\bm{x}}'\coloneqq \bm{x}'' -  \bm{r} \bm{c} & \tilde{\bm{z}} \coloneqq \bm{z}' -  \bm{u} \bm{v}\\
\bm{0} & \tilde{\bm{w}} \coloneqq \bm{w}' -  \bm{r} \bm{v} & \bm{0}
\end{pmatrix}.
\]
Then, by setting
\[
\tilde{\bm{x}} \coloneqq \begin{pmatrix}
    \tilde{\bm{x}}'   \\
    \tilde{\bm{w}}
\end{pmatrix}, \quad \tilde{\bm{y}} \coloneqq \begin{pmatrix}
 \tilde{\bm{x}}' & \tilde{\bm{z}} \\
  \tilde{\bm{w}} & \bm{0}
\end{pmatrix}
\]
we  obtain that $I_t(\bm{x}) = I_{t-1}(\tilde{\bm{x}})$ and $I_s \begin{pmatrix}
    \bm{x}' & \bm{z} 
\end{pmatrix}= I_{s-1} \begin{pmatrix}
    \tilde{\bm{x}}' & \tilde{\bm{z}} 
\end{pmatrix}$. Therefore,

\[
R/(x-1)  = \big(\kay[\tilde{\bm{y}}]/I_{t-1}(\tilde{\bm{x}})\big)\big[\bm{c}, \bm{r}, \bm{u}, \bm{v}\big]    
\]
and
\[
\p R/(x-1) = \big(I_{s-1} \begin{pmatrix}
    \tilde{\bm{x}}' & \tilde{\bm{z}} 
\end{pmatrix}\kay[\tilde{\bm{y}}]/I_{t-1}(\tilde{\bm{x}})\big)\big[\bm{c}, \bm{r}, \bm{u}, \bm{v}\big].    
\]
We are done by using the inductive hypothesis on $s$ together with \autoref{add_variable}. 
\end{proof}
This proves \autoref{pro.mainProposition}.
\end{proof}

\subsection*{Conflict of interest statement:} On behalf of all authors, the corresponding author states that there is no conflict of interest.
\bibliographystyle{skalpha}
\bibliography{MainBib}

\def\cfudot#1{\ifmmode\setbox7\hbox{$\accent"5E#1$}\else
  \setbox7\hbox{\accent"5E#1}\penalty 10000\relax\fi\raise 1\ht7
  \hbox{\raise.1ex\hbox to 1\wd7{\hss.\hss}}\penalty 10000 \hskip-1\wd7\penalty
  10000\box7}
\providecommand{\bysame}{\leavevmode\hbox to3em{\hrulefill}\thinspace}
\providecommand{\MR}{\relax\ifhmode\unskip\space\fi MR}
\providecommand{\MRhref}[2]{%
  \href{http://www.ams.org/mathscinet-getitem?mr=#1}{#2}
}
\providecommand{\href}[2]{#2}
\begin{thebibliography}{BGLM21}

\bibitem[AAM96]{AkritasAkritasMalaschonkSylvesterIdentity}
{\sc A.~G. Akritas, E.~K. Akritas, and G.~I. Malaschonok}: \emph{Various proofs
  of {Sylvester}'s (determinant) identity.}, Math. Comput. Simul. \textbf{42}
  (1996), no.~4-6, 585--593 (English).

\bibitem[ACGH85]{ArbarelloCornalbaGriffithsHArrisGeometryOfAlgebraicCurves}
{\sc E.~Arbarello, M.~Cornalba, P.~A. Griffiths, and J.~Harris}: \emph{Geometry
  of algebraic curves. {Volume} {I}}, Grundlehren Math. Wiss., vol. 267,
  Springer, Cham, 1985 (English).

\bibitem[Bli13]{BlickleTestIdealsViaAlgebras}
{\sc M.~Blickle}: \emph{Test ideals via algebras of {{\(p^{-e}\)}}-linear
  maps}, J. Algebr. Geom. \textbf{22} (2013), no.~1, 49--83 (English).

\bibitem[BST12]{BlickleSchwedeTuckerFSigPairs1}
{\sc M.~Blickle, K.~Schwede, and K.~Tucker}: \emph{{{\(F\)}}-signature of pairs
  and the asymptotic behavior of {Frobenius} splittings}, Adv. Math.
  \textbf{231} (2012), no.~6, 3232--3258 (English).

\bibitem[Bou70]{BourbakiAlgebre}
{\sc N.~Bourbaki}: \emph{El{\'e}ments de math{\'e}matique. {Alg{\`e}bre}.
  {Chapitres} 1 {\`a} 3}, Berlin etc.: Springer-Verlag, 1970 (French).

\bibitem[BGLM21]{BraunGrebLanglisMoragaReductiveQuotientsOfKLTSingularities}
{\sc L.~{Braun}, D.~{Greb}, K.~{Langlois}, and J.~{Moraga}}: \emph{{Reductive
  quotients of klt singularities}}, arXiv e-prints (2021), arXiv:2111.02812.

\bibitem[BM21]{BraunMoragaIterationOfCoxRings}
{\sc L.~{Braun} and J.~{Moraga}}: \emph{{Iteration of Cox rings of klt
  singularities}}, arXiv e-prints (2021), arXiv:2103.13524.

\bibitem[BV88]{BrunsVetterDeterminantalRings}
{\sc W.~Bruns and U.~Vetter}: \emph{Determinantal rings}, Lect. Notes Math.,
  vol. 1327, Berlin etc.: Springer-Verlag, 1988 (English).

\bibitem[CR22]{CarvajalFiniteTorsors}
{\sc J.~Carvajal-Rojas}: \emph{Finite torsors over strongly {{\(F\)}}-regular
  singularities}, {\'E}pijournal de G{\'e}om. Alg{\'e}br., EPIGA \textbf{6}
  (2022), 30 (English), Id/No 1.

\bibitem[CS23]{CarvajalRojasStaeblerTameFundamentalGroupsPurePairs}
{\sc J.~{Carvajal-Rojas} and A.~{St{\"a}bler}}: \emph{{Tame fundamental groups
  of pure pairs and Abhyankar's lemma}}, Algebra Number Theory \textbf{17}
  (2023), no.~2, 309--358 (English).

\bibitem[FW89]{FedderWatanabe}
{\sc R.~Fedder and K.-I. Watanabe}: \emph{A characterization of
  {{\(F\)}}-regularity in terms of {{\(F\)}}-purity}, Commutative algebra,
  {Proc}. {Microprogram}, {Publ}., {Math}. {Sci}. {Res}. {Inst}. 15, 227-245,
  1989 (English).

\bibitem[Gab04]{Gabber.tStruc}
{\sc O.~Gabber}: \emph{Notes on some {{\(t\)}}-structures}, Geometric aspects
  of Dwork theory. Vol. I, II, Berlin: Walter de Gruyter, 2004, pp.~711--734
  (English).

\bibitem[HW02]{HaraWatanabeFRegFPure}
{\sc N.~Hara and K.-i. Watanabe}: \emph{F-regular and {F}-pure rings vs. log
  terminal and log canonical singularities}, J. Algebr. Geom. \textbf{11}
  (2002), no.~2, 363--392 (English).

\bibitem[Har92]{HArrisAlgebraicGeometry}
{\sc J.~Harris}: \emph{Algebraic geometry. {A} first course}, Grad. Texts
  Math., vol. 133, Berlin etc.: Springer-Verlag, 1992 (English).

\bibitem[HH94a]{HochsterHunekeFRegularityTestElementsBaseChange}
{\sc M.~Hochster and C.~Huneke}: \emph{{{\(F\)}}-regularity, test elements, and
  smooth base change}, Trans. Am. Math. Soc. \textbf{346} (1994), no.~1, 1--62
  (English).

\bibitem[HH94b]{HochsterHunekeTCParameterIdealsAndSplitting}
{\sc M.~Hochster and C.~Huneke}: \emph{Tight closure of parameter ideals and
  splitting in module-finite extensions}, J. Algebr. Geom. \textbf{3} (1994),
  no.~4, 599--670 (English).

\bibitem[HR74]{HochsterRobertsRingsOfInvariants}
{\sc M.~Hochster and J.~L. Roberts}: \emph{Rings of invariants of reductive
  groups acting on regular rings are {Cohen}-{Macaulay}}, Adv. Math.
  \textbf{13} (1974), 115--175 (English).

\bibitem[{Knu}09]{KnutsonFrobeniusSplittingPointCountingDegeneration}
{\sc A.~{Knutson}}: \emph{{Frobenius splitting, point-counting, and
  degeneration}}, arXiv e-prints (2009), arXiv:0911.4941.

\bibitem[Kol13]{KollarSingulaitieofMMP}
{\sc J.~Koll{\'a}r}: \emph{Singularities of the minimal model program. {With}
  the collaboration of {S{\'a}ndor} {Kov{\'a}cs}}, Camb. Tracts Math., vol.
  200, Cambridge: Cambridge University Press, 2013 (English).

\bibitem[Kun76]{KunzOnNoetherianRingsOfCharP}
{\sc E.~Kunz}: \emph{On {Noetherian} rings of characteristic p}, Am. J. Math.
  \textbf{98} (1976), 999--1013 (English).

\bibitem[MP]{MaPolstraLecturesOnFsingularities}
{\sc L.~Ma and T.~Polstra}: \emph{F-singularities: a commutative algebra
  approach}, \url{https://www.math.purdue.edu/~ma326/F-singularitiesBook.pdf}.

\bibitem[Mar22]{MartinNumberOfGenerators}
{\sc I.~Martin}: \emph{The number of torsion divisors in a strongly
  {{\(F\)}}-regular ring is bounded by the reciprocal of {{\(F\)}}-signature},
  Commun. Algebra \textbf{50} (2022), no.~4, 1595--1605 (English).

\bibitem[Pol22]{PolstraMCM}
{\sc T.~Polstra}: \emph{A theorem about maximal {Cohen}-{Macaulay} modules},
  Int. Math. Res. Not. \textbf{2022} (2022), no.~3, 2086--2094 (English).

\bibitem[Sch09]{SchwedeFAdjunction}
{\sc K.~Schwede}: \emph{{{\(F\)}}-adjunction}, Algebra Number Theory \textbf{3}
  (2009), no.~8, 907--950 (English).

\bibitem[Sch11]{SchwedeTestIdealsInNonQGor}
{\sc K.~Schwede}: \emph{Test ideals in non-{{\(\mathbb {Q}\)}}-{Gorenstein}
  rings}, Trans. Am. Math. Soc. \textbf{363} (2011), no.~11, 5925--5941
  (English).

\bibitem[{Sec}22a]{SecciaBinomialEdgeIdealsOfWeaklyClosedGraphs}
{\sc L.~{Seccia}}: \emph{{Binomial edge ideals of weakly closed graphs}}, arXiv
  e-prints (2022), arXiv:2208.10749.

\bibitem[Sec22b]{SecciaKnutsonIdealsOfGenericMatrices}
{\sc L.~Seccia}: \emph{Knutson ideals of generic matrices}, Proc. Am. Math.
  Soc. \textbf{150} (2022), no.~5, 1967--1973 (English).

\bibitem[SZ15]{SmithZhang}
{\sc K.~E. Smith and W.~Zhang}: \emph{Frobenius splitting in commutative
  algebra}, Commutative algebra and noncommutative algebraic geometry. Volume
  I: Expository articles, Cambridge: Cambridge University Press, 2015,
  pp.~291--345 (English).

\bibitem[Smo19]{SmolkinGeneralAdjointIdeals}
{\sc D.~Smolkin}: \emph{Test ideals along closed subschemes},
  \url{http://www-personal.umich.edu/~smolkind/files/tauAlongI.pdf}, 2019.

\bibitem[Smo20]{SmolkinSubadditivity}
{\sc D.~Smolkin}: \emph{A new subadditivity formula for test ideals}, J. Pure
  Appl. Algebra \textbf{224} (2020), no.~3, 1132--1172 (English).

\bibitem[Tak04]{TakagiInterpretationOfMultiplierIdeals}
{\sc S.~Takagi}: \emph{An interpretation of multiplier ideals via tight
  closure}, J. Algebr. Geom. \textbf{13} (2004), no.~2, 393--415 (English).

\bibitem[Tak08]{TakagiPLTAdjoint}
{\sc S.~Takagi}: \emph{A characteristic {{\(p\)}} analogue of plt singularities
  and adjoint ideals}, Math. Z. \textbf{259} (2008), no.~2, 321--341 (English).

\bibitem[Tak10]{TakagiAdjointIdealsAlongClosedSubvars}
{\sc S.~Takagi}: \emph{Adjoint ideals along closed subvarieties of higher
  codimension}, J. Reine Angew. Math. \textbf{641} (2010), 145--162 (English).

\bibitem[TW18]{TakagiWatanabeFsingularities}
{\sc S.~Takagi and K.-I. Watanabe}: \emph{{{\(F\)}}-singularities: applications
  of characteristic {{\(p\)}} methods to singularity theory}, Sugaku Expo.
  \textbf{31} (2018), no.~1, 1--42 (English).

\bibitem[{Zhu}22]{ZhuangDirecSummandsKLTSingularities}
{\sc Z.~{Zhuang}}: \emph{{Direct summands of klt singularities}}, arXiv
  e-prints (2022), arXiv:2208.12418.

\end{thebibliography}

\end{document}